\numberwithin{equation}{section}
\newtheorem{theorem}{Theorem}[section]
\newtheorem{lemma}[theorem]{Lemma}
\newtheorem{corollary}[theorem]{Corollary}
\theoremstyle{remark}
\newtheorem*{remark}{Remark}
\theoremstyle{definition}
\newtheorem*{definition}{Definition}
\DeclareMathOperator{\inn}{Inn}
\DeclareMathOperator{\out}{Out}
\DeclareMathOperator{\aut}{Aut}
\DeclareMathOperator{\gap}{gap}
\DeclareMathOperator{\escape}{\escape}
\DeclareMathOperator{\vis}{con}
\DeclareMathOperator{\inv}{inv}
\title{Prescribing inner parts \\ of derivatives of inner functions}
\author{Oleg Ivrii}
\date{August 8, 2017}
\begin{document}

\maketitle

\begin{abstract}
Let $\mathscr J$ be the set of inner functions whose derivative lies in Nevanlinna class. In this note, we show that the  natural map $$F \to \inn(F') \quad : \quad \mathscr J/\aut(\mathbb{D}) \to \inn/S^1$$
is injective but not surjective. More precisely, we show that that the image consists of all inner functions of the form $BS_\mu$ where $B$ is a Blaschke product and $S_\mu$ is the singular factor associated to a measure $\mu$ whose support is contained in a countable union of Beurling-Carleson sets. Our proof is based on extending the work of D.~Kraus and O.~Roth on maximal Blaschke products to allow for singular factors. This  answers a question raised by K.~Dyakonov.
\end{abstract}

\section{Introduction}

\enlargethispage{2pt}

Consider the following curious differentiation procedure: to a Blaschke product
$$
F(z) = \prod_{i=1}^d \frac{z-a_i}{1-\overline{a_i}z}
$$
of degree $d \ge 1$, one can  associate a Blaschke product $B$ of degree $d-1$ whose zeros are located at the critical points of $F$ (that is, at the zeros of $F'$). It is a classical result of M.~Heins \cite[Section 29]{heins} that this correspondence is a bijection, provided one considers $F$ modulo post-composition with M\"obius transformations (as not to change its critical set)
and $B$ up to rotations (which preserve the zero set).

In this paper, we discuss an infinite degree analogue of this problem posed by K.~Dyakonov in \cite{dyakonov-mobius, dyakonov-inner}. We need some definitions.
An {\em inner function} is a holomorphic self-map of the unit disk such that for almost every $\theta \in [0, 2\pi)$, the radial limit $\lim_{r \to 1} F(re^{i\theta})$ exists and has absolute value 1. Let $\inn$ denote the space of all inner functions. We will also be concerned with the subclass $\mathscr J$ of inner functions whose derivative lies in Nevanlinna class, i.e.~which satisfy
\begin{equation}
\label{eq:finite-entropy}
\lim_{r \to 1} \frac{1}{2\pi} \int_{0}^{2\pi} \log^+ |F'(re^{i\theta})| d\theta < \infty.
\end{equation}
Together with Jensen's formula,  (\ref{eq:finite-entropy}) implies that the set of critical points $\{c_i\}$ of $F$ satisfies the {\em Blaschke condition} $\sum (1-|c_i|) < \infty$, and is therefore the zero set of some Blaschke product, which could be either finite or infinite.

According to the work of Ahern and Clark, if $F'$ is a Nevanlinna class function, then it admits an ``inner-outer'' decomposition $F' = \inn F' \cdot \out F'$, see Lemma \ref{ac-lemma} below. The mapping $F \to \inn F'$ from $\mathscr J$ to $\inn$ generalizes the construction outlined for finite Blaschke products above, however,  in addition to recording the critical set of $F$, $\inn F'$ may also contain a non-trivial singular factor. This feature allows us to distinguish different Blaschke products with the same critical set. In this paper, we prove the following theorem:

\begin{theorem}
\label{main-thm}
Let $\mathscr J$ be the set of inner functions whose derivative lies in Nevanlinna class. The natural map $$F \to \inn(F') \quad : \quad \mathscr J/\aut(\mathbb{D}) \to \inn/S^1$$
is injective. The image consists of all inner functions of the form $BS_\mu$ where $B$ is a Blaschke product and $S_\mu$ is the singular factor associated to a measure $\mu$ whose support is contained in a countable union of Beurling-Carleson sets.
\end{theorem}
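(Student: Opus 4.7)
I would split the argument into three parts: injectivity of the map, necessity of the Beurling--Carleson (BC) support condition on $\mu$, and the construction producing $F \in \mathscr{J}$ for each admissible $I = BS_\mu$.

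For injectivity, suppose $F_1, F_2 \in \mathscr{J}$ satisfy $\inn(F_1') = c \cdot \inn(F_2')$ for some unimodular $c$. Then $F_1'/F_2'$ is outer, so $\log|F_1'/F_2'|$ is the Poisson integral of its boundary values; in particular equality of inner parts forces $|F_1'| = |F_2'|$ almost everywhere on $\partial \mathbb{D}$. Since the boundary Jacobians of inner functions together with the inner datum of $F'$ describe the boundary map of $F$ up to rotation, I would conclude $F_2 = \phi \circ F_1$ for some $\phi \in \aut(\mathbb{D})$. A cleaner alternative is to deduce injectivity a posteriori from uniqueness of the maximizer in the extremal problem used below.

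The necessity of the BC support condition is a known phenomenon going back to Ahern--Clark (with refinements by Cohn and Roberts): since $F'$ is in the Nevanlinna class, $\log|F'| \in L^1(\partial \mathbb{D})$, and this integrability constrains the singular factor of $\inn F'$ to have its mass carried by a countable union of Beurling--Carleson sets. I would isolate the relevant statement and verify it delivers the asserted support condition verbatim.

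The main work is the construction. Given $I = BS_\mu$ with $\supp \mu \subset \bigcup_n E_n$ for BC sets $E_n$, I would work in the class
$$
\mathcal{F}(I) = \{F : \mathbb{D} \to \mathbb{D} \text{ holomorphic},\ F(0)=0,\ I \mid \inn(F')\}
$$
and consider the extremal problem of maximizing $|F'(0)|$ over $\mathcal{F}(I)$. Non-triviality of $\mathcal{F}(I)$ is exactly where the BC hypothesis enters: the Korenblum--Roberts construction produces, for each $E_n$, an outer function whose boundary vanishes on $E_n$ to carry the mass $\mu|_{E_n}$, and a product/composition of these assembles a non-constant member of $\mathcal{F}(I)$. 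A normal-families argument then furnishes a maximizer $F_*$, and a Kraus--Roth style variational argument---perturbing $F_*$ along vector fields compatible with the divisibility constraint---shows that $F_*$ must be inner and that $\inn(F_*')$ equals $I$ up to a unimodular factor, rather than some strictly larger inner function.

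\textbf{Main obstacle.} The heart of the difficulty is the sufficiency direction, specifically the variational step showing that the extremal $F_*$ is inner \emph{and} that no extraneous Blaschke zeros or singular mass creep into $\inn(F_*')$ beyond $I$. Extending Kraus--Roth from Blaschke products to inner functions with singular factors requires delicate boundary analysis of $F_*$ near $\supp \mu$, where the smooth machinery of their original argument breaks down. The BC condition on $\supp \mu$ is what makes the perturbation admissible, via the Korenblum--Roberts approximation, and threading this approximation through the variational argument is the principal technical challenge.
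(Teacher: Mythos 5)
Your overall architecture (injectivity, necessity of the Beurling--Carleson condition, an extremal construction for sufficiency) matches the paper's, and the extremal problem you pose is essentially the paper's ``hull'' of the metric $|I|\lambda_{\mathbb D}$. But each of the three steps has a genuine gap. For injectivity, equality of the inner parts of $F_1'$ and $F_2'$ does \emph{not} force $|F_1'|=|F_2'|$ a.e.\ on $\partial\mathbb D$: the inner part carries no information about the outer part, and the whole point of the paper's Fundamental Lemma is that $\out F'$ is nevertheless determined by $\inn F'$ through the extremal property that $\lambda_F=|F'|/(1-|F|^2)$ is the \emph{smallest} curvature $-4$ metric dominating $|\inn F'|\lambda_{\mathbb D}$. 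The paper establishes this via stable approximation by finite Blaschke products and then gets injectivity by wedging $\lambda_{F_1}$ with $\lambda_{F_2}$ and running an Ahern--Clark ``gap'' computation; your first argument skips this, and your ``cleaner alternative'' defers to the unproved uniqueness claim in your third step.

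Second, the necessity direction is not a citable known phenomenon. Since $\log|S_\mu|=0$ a.e.\ on $\partial\mathbb D$, the integrability of $\log|F'|$ places no constraint on the support of the singular measure of $\inn F'$. This direction (Theorem \ref{not-charging-thm}) is the paper's main new contribution: one must show that a measure not charging Beurling--Carleson sets is invisible, which requires the hull characterization of invisibility together with Roberts' decomposition of such a measure into pieces with modulus of continuity $O(t\log(1/t))$ and an iterated conformal-metric estimate. Third, your class $\mathcal F(I)$ is nonempty for \emph{every} inner $I$ (a suitable primitive of $\tfrac12 I$ already lies in it), so existence of a maximizer $F_*$ cannot by itself certify that $F_*$ is inner with $\inn(F_*')=I$; indeed, when $\mu$ has an invisible component the extremal metric is $\lambda_{F_{\mu_{\vis}}}$ and the invisible singular mass is simply lost. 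The Beurling--Carleson hypothesis must therefore enter through a concrete construction --- in the paper, Cullen's theorem that $S_\mu'\in\mathcal N$ when $\supp\mu$ lies in a single Beurling--Carleson set, followed by the division rule and a countable product rule --- and your sketch supplies neither a substitute for this step nor for the variational argument you yourself flag as the main obstacle.
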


In \cite{dyakonov-mobius}, K.~Dyakonov showed that $\inn F'$ is trivial if and only if $F$ is a M\"obius transformation.
After reading Dyakonov's work, the author realized that a theorem of D.~Kraus can be reformulated as ``$F \to \inn F'$ is a bijection from Maximal Blaschke Products in $\mathscr J$ to the space of all Blaschke Products.'' 
The main focus of this paper will be to understand the role of  singular factors. 

\subsection{Strategy}

We now state several propositions which will be used to show Theorem \ref{main-thm}. These will be proved in Sections \ref{sec:understanding-image} and \ref{sec:roberts} after we develop the necessary tools.

\begin{lemma}[Decomposition rule] An inner function $B_C S_\mu$ lies in the image of $F \to \inn F'$ if and only if its singular part $S_\mu$ does.
\end{lemma}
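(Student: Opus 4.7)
The plan is to prove the two implications separately, since they involve quite different manipulations of the critical set.

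For $(\Leftarrow)$, given $G \in \mathscr J$ with $\inn G' = S_\mu$ and a Blaschke sequence $C = (c_i)$, I need to produce $F \in \mathscr J$ with $\inn F' = B_C S_\mu$. A first attempt is to form the composition $F = G \circ \Phi_C$, where $\Phi_C$ is the Kraus--Roth maximal Blaschke product with $\inn \Phi_C' = B_C$. From $F' = (G' \circ \Phi_C)\Phi_C'$ one checks that $F$ has critical set exactly $C$ (since $G$ has none), but the singular part of $F'$ becomes $(S_\mu \circ \Phi_C)\cdot \inn(\out G' \circ \Phi_C)$, which is not literally $S_\mu$ and hence not what we want. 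So composition is not enough. I would instead carry out the construction via an approximation scheme in the spirit of Section \ref{sec:roberts}: approximate $\mu$ by atomic measures $\mu_n$ and $C$ by finite truncations $C_n$, apply Heins' theorem (enhanced to allow point masses in $\inn F'$) to build finite-degree model solutions $F_n$, and pass to a limit using Nevanlinna-norm bounds inherited from $G$ and the maximal product $\Phi_{C_n}$.

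For $(\Rightarrow)$, given $F \in \mathscr J$ with $\inn F' = B_C S_\mu$, I need to produce $G \in \mathscr J$ with $\inn G' = S_\mu$. Here I would proceed by removing critical points one at a time: enumerate $C = \{c_1, c_2, \dots\}$ and construct a sequence $F_n \in \mathscr J$ with $\inn F_n' = B_{\{c_{n+1}, c_{n+2}, \dots\}} \cdot S_\mu$, each $F_n$ obtained from $F_{n-1}$ by a local Heins-type surgery near $c_n$ that fills in the critical point while preserving the singular boundary behavior. After normalizing by automorphisms of $\mathbb{D}$, a normal-families argument extracts a subsequential limit $G$ with $\inn G' = S_\mu$. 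The decisive issue is uniform control of $\int \log^+|F_n'| \, d\theta$, so that $\mathscr J$ is preserved in the limit and $\inn G' = S_\mu$ can be read off from the boundary behavior.

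The principal obstacle in both directions is controlling the Nevanlinna norm of the derivative under modifications of the critical set. I expect this to require a factorization-type formula expressing $\log|F'|$ as a sum of a Green-function contribution coming from $C$ and a contribution coming from the singular measure $\mu$, which presumably is developed in Section \ref{sec:understanding-image} and which would decouple the two features enough to let the Blaschke critical data be added or removed without perturbing $S_\mu$.
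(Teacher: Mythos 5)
Your proposal is a plan, not a proof, and both of its key steps have genuine gaps. For the direction ``$S_\mu$ constructible $\Rightarrow$ $B_CS_\mu$ constructible,'' you correctly rule out the composition $G\circ\Phi_C$, but the replacement you sketch does not work as described. Heins' theorem at finite degree cannot be ``enhanced to allow point masses in $\inn F'$'': the derivative of a finite Blaschke product is rational, so its inner part is a finite Blaschke product with \emph{no} singular factor. Singular factors only appear in the limit of critical points accumulating at the boundary, so your scheme reduces to showing that $\inn F_n'\to\inn F'$ along your approximating sequence --- and this is exactly the stability question that is the crux of the whole paper. Section \ref{sec:stable} exhibits sequences of finite Blaschke products for which $\inn F_n'\not\to\inn F'$ (indeed with $\inn F_n'\to S_{\delta_1}$ while $\inn F'=1$), so ``pass to a limit using Nevanlinna-norm bounds'' is precisely the step that needs an argument, not a routine closing move. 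The paper sidesteps the limit entirely: it builds the target function directly as the wedge $F_\mu\wedge F_C$, the supremum of the Perron family of SK-metrics below $\min(\lambda_{F_\mu},\lambda_{F_C})$, and then reads off $\inn(F_\mu\wedge F_C)'=B_CS_\mu$ from the sandwich $\lambda_{F_\mu}\ge\lambda_{F_\mu\wedge F_C}\ge|B_C|\lambda_{F_\mu}$ using the Fundamental Lemma and the Ahern--Clark gap computations of Section \ref{sec:ac-theory}. Your closing hope for ``a factorization-type formula for $\log|F'|$'' is gesturing at the right object (the $\gap$ identity of Lemma \ref{nevanlinna-gap}), but that identity only pins down $\sigma(F')$ once you already have a two-sided metric inequality in hand; it does not by itself decouple the Blaschke and singular data.

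The converse direction has the same problem in sharper form. The ``local Heins-type surgery near $c_n$ that fills in the critical point while preserving the singular boundary behavior'' is not constructed, and no such local operation is available: deleting a single critical point changes the function globally, and controlling $\inn F_n'$ through infinitely many such modifications is again the stability issue. The paper's actual argument takes an arbitrary finite Blaschke approximation $F_n$ of $F_{\mu,C}$, deletes all critical points in $\{|z|<r\}$ at once, replaces $F_n$ by the \emph{maximal} Blaschke product with the remaining critical points, and passes to a double limit in $n$ and $r$. Maximality gives the two-sided bound $|B_C|\lambda_F\le\lambda_{F_{\mu,C}}\le\lambda_F$ for the limit $F$, Hurwitz's theorem shows $F$ has no critical points, and the gap machinery then forces $\sigma(F')=\mu$, i.e.\ $\inn F'=S_\mu$. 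In short: you have correctly located the difficulty (controlling the inner--outer decomposition under modification of the critical set) but supplied no mechanism for overcoming it, whereas the paper's mechanism --- Perron families, maximal Blaschke products, and the conformal-metric sandwich --- is the substance of the proof.
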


Therefore, to describe the image of our mapping, it suffices to determine which singular inner functions $S_\mu$ can be represented as $S_\mu = \inn F'_\mu$ with $F_\mu \in \mathscr J$. If such an $F_\mu$ can be found (which is necessarily unique), we say that the measure $\mu$ is {\em constructible}\/.

\begin{lemma}[Product rule]
\label{product-law2}
Suppose measures $\mu_j$, $j=1, 2, \dots$ are constructible.
If their sum $\mu = \sum_{j=1}^\infty \mu_j$ is finite, then $\mu$ is also constructible.
\end{lemma}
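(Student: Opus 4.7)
The natural strategy is to realize $F_\mu$ as a subsequential limit of the sequence $F_N := F_{\nu_N}$ associated to the partial sums $\nu_N := \mu_1 + \cdots + \mu_N$. This splits the problem into (i) constructibility of each finite partial sum, and (ii) a limiting argument.

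For (i), one inductively reduces to the case of two summands. Given constructible $F_1$ for $\mu_1$ and $F_2$ for $\mu_2$, the tempting composition $F_1 \circ F_2$ does not work, since
$$
\inn\bigl((F_1\circ F_2)'\bigr) \;=\; (S_{\mu_1}\circ F_2)\cdot S_{\mu_2} \;=\; S_{F_2^*\mu_1+\mu_2},
$$
and the pullback $F_2^*\mu_1$ is generally different from $\mu_1$. Instead, I would construct $F_{\mu_1+\mu_2}$ via the covering / Riemann-surface approach of Kraus and Roth, extended in this paper to accommodate singular ``ideal'' boundary points: one glues the covers underlying $F_1$ and $F_2$ and uniformizes. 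The combined data should materialize as exactly the factor $S_{\mu_1}\cdot S_{\mu_2} = S_{\mu_1+\mu_2}$ in the inner part of the derivative of the uniformizing map.

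For (ii), after a normalization by $\aut(\mathbb D)$ (say $F_N(0)=0$ and $F_N'(0)>0$), the sequence $\{F_N\}$ is a normal family of self-maps of $\mathbb D$, so after passing to a subsequence $F_N\to F_\mu$ locally uniformly. The uniform mass bound $\nu_N(\partial\mathbb D)\le\mu(\partial\mathbb D)$ controls the singular factors at the origin, since $|S_{\nu_N}(0)|\ge e^{-\mu(\partial\mathbb D)}$, which in turn controls the Nevanlinna-class norms of the $F_N'$; hence $F_\mu'$ lies in Nevanlinna class and admits an Ahern-Clark inner-outer decomposition. The pointwise convergence $S_{\nu_N}\to S_\mu$ on $\mathbb D$, combined with convergence of the outer factors, should yield $\inn F_\mu'=S_\mu$ and force $F_\mu$ to be inner.

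\textbf{Main obstacle.} The heart of the argument is step (ii): ruling out degeneracy of $F_\mu$ (which could a priori be constant), the appearance of an extra Blaschke factor in $\inn F_\mu'$ (from zeros of the $F_N'$ accumulating on $\partial\mathbb D$), and loss of singular mass through cancellation between inner and outer factors in the limit. A clean inner-outer match-up in the limit will rely on tight boundary estimates afforded by the uniform bound $\nu_N\le\mu<\infty$, which is where I expect the finiteness hypothesis to be used in an essential way. Step (i) is itself delicate when $\supp\mu_1$ and $\supp\mu_2$ overlap, since the gluing must then respect the pointwise multiplication of singular inner factors on the common support.
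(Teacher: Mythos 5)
Your plan has the right skeleton (handle finite partial sums first, then pass to the limit), but both halves contain genuine gaps, and in each case the missing ingredient is the conformal-metric machinery the paper actually runs on. For the two-summand case, the proposal to ``glue the covers underlying $F_1$ and $F_2$ and uniformize'' is not substantiated: no such gluing construction exists in this setting, and it is unclear what surface the gluing would produce or why its uniformizing map would acquire exactly the singular factor $S_{\mu_1+\mu_2}$ (your own worry about overlapping supports is symptomatic of this). The paper instead uses the Solynin-type submultiplicativity estimate $\lambda_{F_{C_1}}\lambda_{F_{C_2}} \ge \lambda_{F_{C_1\cup C_2}}\lambda_{F_{C_1\cap C_2}}$ for finite critical sets: one approximates $F_{\mu_1}$ and $F_{\mu_2}$ by finite maximal Blaschke products, takes a subsequential limit $F_{C_n}\to F_\mu$ of the unions $C_n = C_{1,n}\cup C_{2,n}$, reads off $\mu\ge\mu_1+\mu_2$ from boundary averages of \eqref{eq:product-law2}, and then invokes the division rule (Corollary \ref{divisors}). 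The point is that one does not try to hit $\mu_1+\mu_2$ exactly in the limit; one only proves a one-sided inequality and then divides down.

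For the countable sum, you correctly flag the danger of losing singular mass in the limit, but you do not resolve it, and it cannot be resolved by ``tight boundary estimates from $\nu_N\le\mu$'' alone: the paper's example of an unstable approximation exhibits finite Blaschke products $F_n\to z$ with $\inn F_n'\to S_{\delta_1}$, so uniform mass bounds do not prevent the inner part of the derivative from collapsing in the limit. What actually closes the argument is minimality and monotonicity of the hull: by Theorem \ref{inv-criterion}, the hull of $|S_\mu|\lambda_{\mathbb{D}}$ is $\lambda_{F_\nu}$ for some $\nu\le\mu$ (this uses Lemmas \ref{technical-lemma} and \ref{ac-theory1}); since each partial sum $\tilde\mu_j=\mu_1+\dots+\mu_j$ is constructible by the finite product rule, $\lambda_{F_{\tilde\mu_j}}\ge|S_{\tilde\mu_j}|\lambda_{\mathbb{D}}\ge|S_\mu|\lambda_{\mathbb{D}}$ forces $\lambda_{F_{\tilde\mu_j}}\ge\lambda_{F_\nu}$, hence $\nu\ge\tilde\mu_j$ by Lemma \ref{ac-theory2}, and letting $j\to\infty$ gives $\nu=\mu$. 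Without this transfer from metric domination to measure domination (the Ahern--Clark gap computations of Section \ref{sec:ac-theory}), your step (ii) remains an unproved hope rather than an argument.
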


\begin{lemma}[Division rule]
\label{divisors3}
If a measure $\mu$ is constructible, then any $\nu \le \mu$ is also constructible.
\end{lemma}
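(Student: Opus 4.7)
Given $F_\mu \in \mathscr{J}$ with $\inn F_\mu' = S_\mu$ and a positive measure $0 \le \nu \le \mu$, my plan is to produce $F_\nu$ via an extremal construction in the spirit of Kraus--Roth's maximal Blaschke products, suitably extended to accommodate singular factors. After normalizing so that $F_\mu(0) = 0$, consider the class
\[
\mathcal{A}_\nu := \bigl\{ G \in \mathscr{J} : G(0) = 0,\ G'(0) > 0, \text{ and } S_\nu \mid \inn G' \bigr\},
\]
which is nonempty since it contains $F_\mu$. I would define $F_\nu$ as an extremizer of $|G'(0)|$ (or an equivalent conformally natural functional) over $\mathcal{A}_\nu$. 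The guiding intuition is that a derivative of maximal modulus at the origin forces the singular part of $\inn F_\nu'$ to be as small as the constraint allows, hence exactly $S_\nu$.

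The argument then proceeds in three steps. First, \emph{existence}: Montel's theorem together with the Ahern--Clark decomposition yields a maximizer in $\mathcal{A}_\nu$, with compactness of the class inherited from $F_\mu$ via distortion estimates for inner functions whose derivatives lie in Nevanlinna class. Second, \emph{identification}: show that the extremal $F_\nu$ satisfies $\inn F_\nu' = S_\nu$ exactly. I would argue by contradiction --- if $\inn F_\nu' = S_\rho$ for some $\rho \gneq \nu$ with $\rho \le \mu$, construct an explicit perturbation $\widetilde G \in \mathcal{A}_\nu$ of $F_\nu$ with $|\widetilde G'(0)| > |F_\nu'(0)|$, contradicting extremality. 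Third, \emph{verification}: use the decomposition rule (the first lemma above) to absorb any extraneous Blaschke factor in $\inn F_\nu'$ into a post-composition by an automorphism, and thereby conclude that $\nu$ is constructible in the sense defined above.

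\textbf{Main obstacle.} The technical heart is the identification step. One must exhibit a perturbation of an inner function that strictly decreases the singular part of its derivative at a chosen point $\zeta$ in the support of the extra mass $\rho - \nu$, while preserving the constraint $S_\nu \mid \inn G'$ and the normalization at the origin. I anticipate that the perturbation will have to be localized in a horodisk at $\zeta$, and that the requisite first-variation calculation can only be carried out cleanly when the support of the ambient measure $\mu$ sits on a countable union of Beurling--Carleson sets --- exactly the hypothesis entering the main theorem. This is where the Kraus--Roth framework, generalized in the earlier sections of the paper to allow for singular factors, should provide the necessary quantitative control over how the extremal functional responds to small changes in the singular factor of $\inn G'$.
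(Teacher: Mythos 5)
Your plan has a genuine gap at both of its load-bearing points. First, the \emph{existence} step: the class $\mathcal A_\nu$ is not closed under locally uniform limits, because the singular factor of $\inn G'$ can dissipate in the limit. This is exactly the content of inequality (\ref{eq:fa1}) in Theorem \ref{approximation-thm}, $\sigma(F') \le \sigma(\lim \inn F_n')$, and the explicit unstable example in Section \ref{sec:stable} (finite Blaschke products $F_n \to z$ with $\inn F_n' \to S_{\delta_1}$) shows the inequality can be strict. So a maximizing sequence in $\mathcal A_\nu$ may converge to a function whose derivative no longer has $S_\nu$ as a divisor, and Montel plus Ahern--Clark does not rescue this. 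Second, the \emph{identification} step, which you yourself flag as the main obstacle, is the entire content of the lemma and is not supplied. Worse, the extremal problem you pose is essentially the hull construction of Section \ref{sec:hull}, and Theorem \ref{inv-criterion} shows that this extremal object realizes only the \emph{constructible part} $\nu_{\vis}$ of $\nu$; no local perturbation argument can do better, because for invisible $\nu$ the desired conclusion is simply false. Any correct proof must therefore use the hypothesis that $\nu$ sits \emph{below a constructible} $\mu$ in an essential way, and your sketch invokes $F_\mu$ only to show $\mathcal A_\nu \ne \emptyset$ --- which is circular, since ``$\nu$ is dominated by a constructible measure iff $\nu$ is constructible'' is precisely the statement being proved. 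Your closing suggestion that the perturbation works only when $\mu$ is supported on a countable union of Beurling--Carleson sets would also render the lemma useless where the paper needs it.

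The paper's route is quite different and avoids variational arguments entirely. One takes a \emph{stable} approximation $F_{C_n} \to F_\mu$ by finite (maximal) Blaschke products (Lemma \ref{craizer-lemma}), splits the critical sets $C_n = C_{1,n} \cup C_{2,n}$ so that $B_{C_{1,n}} \to S_\nu$ and $B_{C_{2,n}} \to S_{\mu - \nu}$, and shows that $F_{C_{1,n}} \to F_\nu$. The key computation is that a subsequential limit satisfies $F_{C_{2,n}} \to F_{\nu_2}$ with $\nu_2 \le \mu - \nu$ by Theorem \ref{approximation-thm}, while the pointwise inequality $\lambda_{F_{C_n}} \ge |B_{C_{1,n}}| \lambda_{F_{C_{2,n}}}$ passes to the limit to give $\lambda_{F_\mu} \ge |S_\nu| \lambda_{F_{\nu_2}}$, and the gap computation of Section \ref{sec:gap} then forces $\mu \le \nu + \nu_2$, hence $\nu_2 = \mu - \nu$ and, symmetrically, that the limit of $F_{C_{1,n}}$ realizes exactly $S_\nu$. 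In short: the paper exploits monotonicity of singular masses under limits in \emph{both} directions (dissipation from Theorem \ref{approximation-thm}, and a lower bound from the metric inequality inherited from $F_\mu$), pinning the limit measure from two sides. If you want to salvage an extremal formulation, you would have to prove this two-sided control anyway, at which point the extremal problem is superfluous.
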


Recall that a {\em Beurling-Carleson set} is a closed subset of the unit circle of zero Lebesgue measure whose complement is a union of arcs $\bigcup_k I_k$ with $\sum |I_k| \log \frac{1}{|I_k|} < \infty$. 
To obtain a large supply of constructible measures, we use the following result of
 Cullen  \cite{cullen}:

\begin{lemma}
\label{cullen-thm}
Suppose the support of $\mu$ is contained in a Beurling-Carleson set.
Then $S'_\mu \in \mathcal N$.
\end{lemma}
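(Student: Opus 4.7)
The plan is to realize $S'_\mu$ as a quotient of two bounded holomorphic functions; since the Nevanlinna class is the field of fractions of $H^\infty$, this would give $S'_\mu \in \mathcal{N}$. Concretely, I would build, from a Beurling-Carleson set $E$ containing $\supp \mu$, a bounded outer function $h$ with $|h(e^{i\theta})| = d(e^{i\theta}, E)$ on the unit circle, and use the vanishing of $h$ near $E$ to cancel the blow-up of $|S'_\mu|$ there.

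For the construction of $h$, first observe that the Beurling-Carleson condition forces $\log d(\cdot, E) \in L^1(S^1)$: on each complementary arc $I_k$ of length $\ell_k$ one has $-\int_{I_k} \log d(e^{i\theta}, E)\, d\theta \lesssim \ell_k \log(1/\ell_k) + \ell_k$, and the sum over $k$ is finite by hypothesis. Accordingly define
$$
h(z) = \exp\left(\int_0^{2\pi} \frac{e^{i\theta}+z}{e^{i\theta}-z}\, \log d(e^{i\theta}, E)\, \frac{d\theta}{2\pi}\right),
$$
so that $\log|h(z)|$ is the Poisson integral of $\log d(\cdot, E)$. The key bound is $|h(z)| \le d(z, E)$ throughout $\mathbb{D}$: pick $e^{i\phi_0} \in E$ closest to $z$, note $d(e^{i\theta}, E) \le |e^{i\theta}-e^{i\phi_0}|$ for every $\theta$, and recall that for $|a|=1$ the function $z \mapsto \log|z-a|$ is harmonic on $\mathbb{D}$ with boundary values $\log|e^{i\theta}-a|$, hence equals its own Poisson integral. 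Positivity of the Poisson kernel then yields
$$
\log|h(z)| \le \int_0^{2\pi} \frac{1-|z|^2}{|e^{i\theta}-z|^2}\, \log|e^{i\theta}-e^{i\phi_0}|\, \frac{d\theta}{2\pi} = \log|z-e^{i\phi_0}| = \log d(z, E).
$$

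Differentiating the exponential representation of $S_\mu$ gives
$$
S'_\mu(z) = -S_\mu(z) \int \frac{2 e^{i\theta}}{(e^{i\theta}-z)^2}\, d\mu(\theta),
$$
and combining $|S_\mu| \le 1$ with $|e^{i\theta}-z| \ge d(z, E)$ for $\theta \in \supp \mu \subseteq E$ yields $|S'_\mu(z)| \le 2\mu(S^1)/d(z, E)^2$. Hence $|h(z)^2 S'_\mu(z)| \le 2\mu(S^1)$, so $h^2 S'_\mu \in H^\infty$. Since $h$ is an exponential and hence non-vanishing on $\mathbb{D}$, the identity $S'_\mu = (h^2 S'_\mu)/h^2$ exhibits $S'_\mu$ as a ratio of $H^\infty$ functions, which is what we needed.

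The main obstacle is the pointwise inequality $|h(z)| \le d(z, E)$; the remaining steps are short calculations. The slick argument above, which replaces $\log d(\cdot, E)$ by $\log|\cdot - e^{i\phi_0}|$ and invokes the harmonicity of the latter, reduces the bound to a one-line Poisson-integral identity. Without spotting this identity one would be forced into a direct estimate of the Poisson integral of $\log d(\cdot, E)$, which is doable but less transparent and requires splitting into cases depending on whether $1 - |z|$ is comparable to $d(z, E)$ or much smaller.
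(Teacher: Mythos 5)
Your proof is correct and complete: the integrability of $\log d(\cdot,E)$ follows exactly as you say from the Beurling--Carleson condition, the bound $|h(z)|\le d(z,E)$ via comparison with the Poisson integral of $\log|e^{i\theta}-e^{i\phi_0}|$ is valid, and the resulting representation $S_\mu'=(h^2S_\mu')/h^2$ with $h^2S_\mu'\in H^\infty$ and $h^2$ zero-free does place $S_\mu'$ in $\mathcal N$. Note that the paper does not prove this lemma at all --- it is quoted from Cullen \cite{cullen} --- and your argument is essentially the classical one for that result (the standard trick of building an outer function whose boundary modulus is the distance to the Carleson set and using it to absorb the growth of $|S_\mu'|$ near the support), so you have supplied a self-contained proof of a fact the paper only cites.
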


Since $S_\mu$ divides $S'_\mu$, the division rule implies that any measure $\mu$ supported on a Beurling-Carleson set is constructible. By the product rule, any measure  supported on a countable union of
 Beurling-Carleson sets is also constructible. Theorem \ref{main-thm} states that any constructible measure is of this form.
Moreover, Theorem \ref{main-thm} implies that Cullen's theorem is essentially sharp:

\begin{corollary}
\label{cullen-sharp}
Suppose $\mu$ is a measure on the unit circle with $S'_\mu \in \mathcal N$. Then, the support of $\mu$ is contained in a countable union of Beurling-Carleson sets.
\end{corollary}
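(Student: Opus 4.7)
The plan is to read Corollary~\ref{cullen-sharp} as a direct application of Theorem~\ref{main-thm} to the singular inner function $S_\mu$ itself. The key observation is that the hypothesis $S'_\mu \in \mathcal{N}$ says precisely that $F := S_\mu$ lies in $\mathscr{J}$, so the classification of the image in Theorem~\ref{main-thm} applies to $\inn(F')$.

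First I would apply Theorem~\ref{main-thm} with $F = S_\mu$ to obtain a factorization
\[
\inn(S'_\mu) \;=\; c \cdot B \cdot S_\tau,
\]
where $c \in S^1$, $B$ is a Blaschke product, and $\tau$ is a singular measure whose support lies in a countable union of Beurling-Carleson sets. The remaining task is then to show $\mu \le \tau$, which will give $\supp \mu \subseteq \supp \tau$ and finish the argument.

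To obtain $\mu \le \tau$, I would use the fact (already invoked in the discussion following Cullen's theorem) that $S_\mu$ is always an inner divisor of $S'_\mu$. This comes from the one-line computation
\[
\frac{S'_\mu(z)}{S_\mu(z)} \;=\; -\int_{\partial \mathbb{D}} \frac{2 e^{it}}{(e^{it}-z)^2}\, d\mu(t),
\]
which is holomorphic on $\mathbb{D}$, so uniqueness of the inner--outer factorization of $S'_\mu \in \mathcal{N}$ forces $S_\mu \mid \inn(S'_\mu) = B S_\tau$. Since $S_\mu$ is purely singular and $B$ is a Blaschke product, they share no common inner divisor, and therefore $S_\mu \mid S_\tau$, i.e., $\mu \le \tau$.

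The only subtle point, and the step I would treat most carefully, is the divisibility claim $S_\mu \mid \inn(S'_\mu)$; once that is in hand, the corollary is essentially immediate from Theorem~\ref{main-thm} and requires no new analytic input. In other words, nearly all of the work of the corollary is already absorbed into the surjectivity half of the main theorem, and the role of this proof is just to funnel a general singular $\mu$ with $S'_\mu \in \mathcal{N}$ into that classification by way of the trivial divisibility $S_\mu \mid S'_\mu$.
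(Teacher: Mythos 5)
Your argument is correct and is essentially the paper's own: the corollary is presented there as an immediate consequence of Theorem~\ref{main-thm} applied to $F=S_\mu\in\mathscr J$ together with the fact that $S_\mu$ divides $S'_\mu$ (the paper routes this through the Division Rule to conclude first that $\mu$ is constructible, but that is only a cosmetic difference from your ``$\mu\le\tau$'' bookkeeping).

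The one step whose justification you should tighten is exactly the one you flagged. Holomorphy of $S'_\mu/S_\mu$ on $\mathbb{D}$ plus uniqueness of the inner--outer factorization does \emph{not} by itself force $S_\mu\mid\inn(S'_\mu)$: singular inner functions never vanish, so quotients by them are always holomorphic (e.g.\ $1/S_\mu$ is holomorphic, yet $S_\mu$ does not divide the constant $1$). What you actually need is that the quotient
$g(z)=-\int_{\partial\mathbb{D}}\frac{2e^{it}}{(e^{it}-z)^2}\,d\mu(t)$
lies in the Smirnov class $\mathcal N^+$, so that its singular measure is nonnegative and hence $\sigma(S'_\mu)=\mu+\sigma(g)\ge\mu$. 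This is standard: $(e^{it}-z)^{-2}$ belongs to $H^p$ for $p<1/2$ with norm uniformly bounded in $t$, and the $p$-quasinorm triangle inequality then puts $g\in H^p\subset\mathcal N^+$. With that patch (which the paper itself omits, asserting ``$S_\mu$ divides $S'_\mu$'' without proof), your proof is complete.
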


On the other side of the spectrum, we have invisible measures.
 We say that a finite positive singular measure $\mu$ is {\em invisible} if for any measure
 $0 <  \nu \le \mu$, there does not exist a function $F_\nu \in \mathscr J$ with $\inn F_\nu' = S_\nu$.
In Section \ref{sec:understanding-image}, we will show that any singular measure on the unit circle $\mu$ can be uniquely decomposed into a constructible part and an invisible part: $\mu = \mu_{\vis} + \mu_{\inv}$. To complete the proof of Theorem \ref{main-thm}, we give a criterion for a measure to be invisible:

\begin{theorem}
\label{not-charging-thm}
Suppose $\mu$ is a measure on the unit circle which does not charge Beurling-Carleson sets. Then, it is invisible.
\end{theorem}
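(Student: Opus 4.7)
The plan is to argue by contradiction. Suppose some non-trivial $\nu \le \mu$ is constructible, so there exists $F \in \mathscr J$ with $\inn F' = S_\nu$; the goal is to produce a Beurling-Carleson set $K$ with $\nu(K) > 0$, contradicting the hypothesis that $\mu$ does not charge BC sets.

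The starting point is the Ahern-Clark factorization $F' = S_\nu \cdot \out F'$ together with the Poisson representation
$$
\log|F'(z)| \;=\; \int_{\mathbb T} P_z(\zeta)\,\log|F'^*(\zeta)|\,dm(\zeta)\;-\;\int_{\mathbb T} P_z(\zeta)\,d\nu(\zeta),
$$
where the boundary modulus $|F'^*|$ exists a.e.\ and satisfies $\log|F'^*| \in L^1(\mathbb T)$ because $F' \in \mathcal N$. The Riesz mass of the superharmonic function $-\log|F'|$ thus contains $\nu$ as its singular part, so locating $\nu$ reduces to locating where $|F'|$ decays singularly fast at the boundary.

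I would then introduce the level sets $E_n = \{\zeta \in \mathbb T : |F'^*(\zeta)| \ge 1/n\}$, which are of near-full Lebesgue measure for large $n$. Combining inner regularity with a stopping-time selection over dyadic arcs, I would carve out closed subsets $K_n \subset E_n$ with the following key property: on every complementary arc $I \subset \mathbb T \setminus K_n$, the boundary values $F^*|_I$ must wind substantially around the target circle (since $F$ is inner), forcing a contribution $\int_I -\log|F'^*|\,dm \gtrsim |I|\log(1/|I|)$ whenever $|I|$ is small. Summing over complementary arcs and using the global $L^1$-bound on $\log|F'^*|$ then forces $\sum_I |I|\log(1/|I|) < \infty$, which is precisely the defining property of a Beurling-Carleson set; hence each $K_n$ is BC.

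Finally, a radial-limit argument in the Poisson representation shows that $\nu$ is concentrated on $\bigcup_n K_n$: at a point $\zeta_0$ outside this union, $|F'|$ is controlled from below along the radius and cannot absorb a singular mass of $\nu$ at $\zeta_0$. Since $\nu \neq 0$, some $K_n$ must satisfy $\nu(K_n) > 0$, the desired contradiction. The main technical obstacle is the stopping-time construction in the third step: translating the soft integrability $\log|F'^*| \in L^1$ into the geometric Beurling-Carleson summability on complementary arcs. This is where the hypothesis $F \in \mathscr J$ (rather than a weaker boundedness of $F$) enters decisively, and I expect the analysis to parallel the Roberts-type framework invoked in Section~\ref{sec:roberts}, possibly by approximating $F$ by inner functions whose critical structure and singular factor are both supported on explicit BC sets.
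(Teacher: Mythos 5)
Your strategy is genuinely different from the paper's, but it contains a gap at its central step, and in fact the set-up already goes wrong before that. First, the level sets $E_n = \{\zeta : |F'^*(\zeta)| \ge 1/n\}$ cannot localize $\nu$. By Lemma \ref{frostman-derivative} you may normalize $F(0)=0$, and then $|F'^*|\ge 1$ a.e.\ on $\mathbb{S}^1$, so every $E_n$ is the full circle up to a null set and the sets $K_n\subset E_n$ carry no information about where the singular mass sits. More fundamentally, $\nu=\sigma(F')$ lives on a Lebesgue-null set and is invisible to the a.e.-defined boundary data $|F'^*|$; it is detected only by the \emph{gap} between $\int_I \log|F'|\,dm$ on the boundary and its radial limit (Lemma \ref{nevanlinna-gap}), i.e.\ by radial behaviour, not by boundary level sets. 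Second, the claimed inequality $\int_I -\log|F'^*|\,dm \gtrsim |I|\log(1/|I|)$ on complementary arcs is false as stated: with $F(0)=0$ the integrand is $\le 0$. Even with the sign reversed, no mechanism is offered that forces $F$ to wind substantially on an arc complementary to $K_n$ --- $F$ could be close to a rotation there --- and this ``winding implies Beurling--Carleson summability'' step is exactly the hard content of the theorem, which you acknowledge but do not supply. Note also that what you are trying to prove (every constructible $\nu$ is carried by a countable union of Beurling--Carleson sets) is, via the decomposition $\mu=\mu_{\vis}+\mu_{\inv}$ of Theorem \ref{inv-criterion}, equivalent in strength to the statement itself, so the contradiction scheme does not reduce the problem.

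The paper's route is entirely different and avoids boundary values of $F'$ altogether: by the invisibility criterion (Theorem \ref{inv-criterion}) it suffices to show that the hull of $|S_\mu|\lambda_{\mathbb{D}}$ is $\lambda_{\mathbb{D}}$. One first handles measures with modulus of continuity $\omega(t)\le c\,t\log(1/t)$ (Theorem \ref{inv-thm}), for which $|S_\mu|\lambda_{\mathbb{D}}\to\infty$ at the boundary and Corollary \ref{fail-lemma2} applies; the general case is then obtained from Roberts' decomposition $\mu=\sum_j\mu_j$ into such pieces adapted to the scales $n_j=2^{2^{j+j_0}}$, together with the iterated identity $\Lambda\bigl[|S_{\mu_1}|\cdots|S_{\mu_j}|\lambda_{\mathbb{D}}\bigr]=\Lambda\bigl[|S_{\mu_1}|\cdots\Lambda[|S_{\mu_j}|\lambda_{\mathbb{D}}]\cdots\bigr]$ and a uniform lower bound $\lambda_j\ge(1/2)\lambda_{\mathbb{D}}$ on $\mathbb{S}_{1-1/n_1}$ propagated inductively from scale to scale. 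If you want to salvage a direct function-theoretic argument along your lines, you would need to replace the level sets of $|F'^*|$ by sets defined through the radial decay of $(1-|F(rz)|)/(1-r)$ and then prove the Beurling--Carleson summability of the complementary arcs, which is essentially the Korenblum--Roberts theorem and is not softer than the route the paper takes.
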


  The reader may notice that the notion of an invisible measure coincides with the description of cyclic inner functions in Bergman spaces given independently by Korenblum \cite{korenblum} and Roberts \cite{roberts}. To prove Theorem \ref{not-charging-thm}, we will first show that any measure $\mu$ with modulus of continuity $\omega(t) \le Ct\log(1/t)$ is invisible.
To obtain the full result, we use an iterative scheme based on a clever decomposition of a measure that does not charge Beurling-Carleson sets into ``$t\log1/t$''-pieces from \cite{roberts}. 

We conclude the introduction by spending a moment to check that the map in Theorem \ref{main-thm} is well-defined:

\begin{lemma}
\label{frostman-derivative}
If $F \in \mathscr J$ is an inner function, then for any M\"obius transformation $T \in \aut \mathbb{D}$, the Frostman shift $T \circ F \in \mathscr J$ and $\inn (T \circ F)' = \inn F'$.
\end{lemma}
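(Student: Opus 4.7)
My plan is to apply the chain rule, isolate the extra factor coming from $T$, and argue that this factor is outer; then by uniqueness of the inner-outer decomposition the inner part of the derivative is unchanged.

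First, write the Möbius transformation as $T(w) = \lambda(w-a)/(1-\bar a w)$ with $|\lambda|=1$ and $a \in \mathbb{D}$. A direct computation gives $T'(w) = \lambda(1-|a|^2)/(1-\bar a w)^2$, so the chain rule yields
\[(T\circ F)'(z) \;=\; \frac{\lambda(1-|a|^2)}{(1-\bar a F(z))^2}\cdot F'(z).\]
The key point I would emphasize is that $|F(z)|\le 1$ throughout $\mathbb{D}$, which forces $1-|a|\le |1-\bar a F(z)|\le 1+|a|$. Consequently $T'\circ F$ is a zero-free holomorphic function on $\mathbb{D}$ whose modulus is bounded above \emph{and} bounded below by positive constants.

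Two consequences flow from this. First, $T'\circ F$ is outer: since it is zero-free on the simply connected disk, it has a single-valued holomorphic logarithm, and $\log|T'\circ F|$ is then a bounded harmonic function equal to the Poisson integral of its own boundary values, which is precisely the defining property of an outer function. Second, since $T'\circ F$ is bounded, multiplication by it preserves the Nevanlinna class, so $F'\in\mathcal N$ gives $(T\circ F)'\in\mathcal N$, i.e.\ $T\circ F\in\mathscr J$. Combining these facts with the Ahern--Clark decomposition (Lemma~\ref{ac-lemma}), the identity
\[(T\circ F)' \;=\; \inn F'\cdot\bigl((T'\circ F)\cdot\out F'\bigr)\]
exhibits the inner factor of $(T\circ F)'$ as $\inn F'$, and uniqueness of the factorization gives $\inn(T\circ F)' = \inn F'$.

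I do not foresee a serious obstacle; the whole argument is bookkeeping with the chain rule and the inner-outer factorization. The only point requiring slight care is that $F'$ need not lie in any $H^p$, only in the larger Nevanlinna class, but the cited Ahern--Clark result provides precisely the factorization needed in that generality.
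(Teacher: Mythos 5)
Your proof is correct and follows essentially the same route as the paper: both hinge on the chain rule identity $(T\circ F)'=(T'\circ F)\cdot F'$ and the fact that $\log|T'\circ F|$ is bounded. The only difference is cosmetic: you conclude by showing directly that the bounded, zero-free factor $T'\circ F$ is outer, whereas the paper deduces the two-sided divisibility of the inner parts by running the same argument for $T$ and $T^{-1}$.
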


\begin{proof}
From the chain rule, we have $(T \circ F)'(z) =  T'(F(z)) \cdot F'(z)$. Since $\log |T'|$ is bounded, $T \circ F \in \mathscr J$. The equality also tells us that the inner part $\inn (T \circ F)'$ is divisible by $\inn F'$. Using $T^{-1}$ in place of $T$, we see that
$\inn F'$ is divisible by $\inn (T \circ F)'$. Hence, $\inn (T \circ F)' = \inn F'$ agree (up to a unimodular constant).
\end{proof}

\subsection{Notation}

Let $m$ denote the Lebesgue measure on ${\mathbb{S}}^1$, normalized to have unit mass and
$\lambda_{\mathbb{D}} = \frac{|dz|}{1-|z|^2}$ be the Poincar\'e metric on the unit disk. For a holomorphic mapping $F: \mathbb{D} \to \mathbb{D}$, we denote the associated conformal metric of  constant curvature $-4$ by
$$
\lambda_F := \frac{|F'|}{1-|F|^2}.
$$
Given a Blaschke sequence $C$ in the unit disk, let $B_C$ be the Blaschke product with zero set $C$ and $F_C$ denote the maximal Blaschke product with critical set $C$. In order for $B_C$ and $F_C$ to be uniquely defined, we use the normalizations $B_C(1) = 1$ and $F_C(0) = 0$, $F_C'(0) > 0$ (or $F_C^{(n+1)}(0) > 0$ if $C$ contains 0 with multiplicity $n$).
 For a singular measure $\mu$ on the unit circle, we let $S_\mu$ be the associated singular inner function.

\section{Background on conformal metrics}

 Given an at most countable set $C$ in the unit disk (counted with multiplicity), the machinery of Kraus and Roth \cite{kraus}--\cite{maximal-blaschke} seeks to construct a Blaschke product with critical set $C$. If such a Blaschke product does not exist, then the machinery
 does not produce anything. If there are Blaschke products with critical set $C$, the machinery produces the optimal or {\em maximal} Blaschke product $F_C$.

 Instead of constructing $F_C$ directly, Kraus and Roth construct the conformal metric $F_C^* \lambda_{\mathbb{D}}$ -- the pullback of the Poincar\'e metric on the disk. We give a brief overview of their construction. Following Heins, an {\em SK-metric} $\lambda(z) |dz|$ is a conformal pseudometric on a domain $U$ whose density $\lambda: U \to [0,\infty)$ is a continuous function with  curvature
$$
k_\lambda = - \frac{\Delta \log \lambda}{\lambda^2} \le -4,
$$
in the sense of distributions. 
According to \cite[Section 13]{heins} or \cite[Definition 4.11]{conf-metrics}, a collection $\Phi$ of SK-metrics is a {\em Perron family} if it is closed under {\em modifications} and {\em taking maxima}\/. The first condition means that given a round disk $D \subset U$ and a metric $\lambda \in \Phi$, the (unique) SK-metric $M_D \lambda$ which agrees with $\lambda$ on $U \setminus D$ and has  curvature $-4$ in $D$ lies in $\Phi$; while the second condition says that for any $\lambda_1, \lambda_2 \in \Phi$, their pointwise maximum
 $\max(\lambda_1, \lambda_2)$ is also in $\Phi$.
Heins proved that if a Perron family is non-empty, then the supremum of all metrics in $\Phi$ is a regular conformal metric of curvature $-4$, where regular means ``$C^2$ on the complement of $\{ z \in U: \lambda(z) = 0 \}$.''

We also recall a complementary theorem due to Liouville \cite[Theorem C]{KR-survey} which says that if a conformal
metric $\lambda(z) |dz|$ has constant curvature $-4$ and all  its zeros $c_i$ have {\em integral} multiplicities, that is if
$$
\lim_{z\to c_i} \frac{\lambda(z)}{|z-c_i|^{m_i}} = L_i, \qquad \text{for some} \quad 1 \le m_i \in \mathbb{Z}, \quad 0 < L_i < \infty,
$$
 then $\lambda(z)|dz|$  is necessarily of the form $\lambda_F = F^* \lambda_{\mathbb{D}}$ for some holomorphic function $F: U \to \mathbb{D}$. Furthermore, the function $F$ is unique up to post-composition with a M\"obius transformation.

For a set $C$ in the unit disk, let $\Phi_C$ be the collection of all conformal metrics vanishing on $C$. It clearly verifies the two axioms of being a Perron family on the domain $\mathbb{D} \setminus C$.
Provided $\Phi_C$ is non-empty, one obtains a metric of constant curvature $-4$ and a holomorphic function $F_C$ which vanishes on $C$ to the correct order. Leveraging the maximality of the metric $\lambda_{F_C}$, Kraus \cite{kraus} proved that the outer and singular inner factors of $F_C$ are trivial. In other words, $F_C$ is a Blaschke product.

In the case when the critical set $C$ is a Blaschke sequence, Kraus made the fundamental observation that $|B_C|  \lambda_{\mathbb{D}}$ is an SK-metric which guarantees  that the Perron family $\Phi_C$ is non-empty. (More generally, given a holomorphic function $H$ with $\|H\|_\infty \le 1$ and a metric $\lambda$ of curvature $-4$,  $|H| \cdot \lambda$ is an SK-metric.)

 Further exploiting the lower bound $\lambda_{F_C} \ge |B_C|  \lambda_{\mathbb{D}}$, Kraus obtained the following remarkable result \cite[Theorem 4.4]{kraus}:

\begin{theorem}[Kraus]
\label{kr-theorem}
 Suppose $C$ is a Blaschke sequence in the disk and $\lambda$ is a metric of constant curvature $-4$ which vanishes precisely at $C$ with the correct multiplicity. Then $\lambda = \lambda_{F_C}$ if and only if
\begin{equation}
\label{eq:kr-theorem}
\lim_{r \to 1} \int_{|z|=r} \log \frac{\lambda}{\lambda_{\mathbb{D}}} \, d\theta = 0.
\end{equation}
\end{theorem}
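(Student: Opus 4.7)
The plan is to prove the two implications separately, first establishing the forward direction by an explicit sandwich, then deducing the reverse direction by a convexity/mean-value comparison against the maximal metric.

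For the forward direction, I would exploit the Perron family construction directly. Because $|B_C| \leq 1$ is holomorphic, the pointwise product $|B_C| \cdot \lambda_{\mathbb{D}}$ is an SK-metric vanishing to the correct order on $C$, hence lies in $\Phi_C$ and is dominated by the supremum $\lambda_{F_C}$. On the other side, the Ahlfors-Schwarz lemma gives $\lambda_{F_C} \leq \lambda_{\mathbb{D}}$. Taking logarithms and integrating over $\{|z| = r\}$ yields
\begin{equation*}
\int_{|z|=r} \log |B_C| \, d\theta \;\leq\; \int_{|z|=r} \log \frac{\lambda_{F_C}}{\lambda_{\mathbb{D}}} \, d\theta \;\leq\; 0.
\end{equation*}
The leftmost integral is a standard Jensen computation that tends to $0$ as $r \to 1$ under the Blaschke condition, so the middle integral is squeezed to $0$.

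For the reverse direction, I would apply Liouville's theorem to write $\lambda = \lambda_F$ for some holomorphic $F : \mathbb{D} \to \mathbb{D}$ whose critical set is exactly $C$ with prescribed multiplicities. Maximality of $\lambda_{F_C}$ inside the Perron family gives $\lambda_F \leq \lambda_{F_C}$ on $\mathbb{D}$. Consider the comparison function
\begin{equation*}
u \;:=\; \log \lambda_{F_C} - \log \lambda_F.
\end{equation*}
Near any $c \in C$ with multiplicity $m_c$, both $\log \lambda_{F_C}$ and $\log \lambda_F$ have an $m_c \log|z-c|$ singularity, so these cancel exactly and $u$ extends smoothly across $C$ as a non-negative function on all of $\mathbb{D}$. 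From the curvature identity $\Delta \log \lambda = 4\lambda^2$, the distributional Dirac masses at $C$ cancel and one obtains $\Delta u = 4(\lambda_{F_C}^2 - \lambda_F^2) \geq 0$, so $u$ is subharmonic on $\mathbb{D}$. Hence the circular mean $M_u(r) = \frac{1}{2\pi}\int_{|z|=r} u \, d\theta$ is non-decreasing in $r$. By the hypothesis on $\lambda = \lambda_F$ together with the forward direction applied to $\lambda_{F_C}$, both $\int \log(\lambda_{F_C}/\lambda_{\mathbb{D}})\, d\theta$ and $\int \log(\lambda_F/\lambda_{\mathbb{D}})\, d\theta$ tend to $0$, so $M_u(r) \to 0$. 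Combined with $u \geq 0$ and monotonicity, this forces $M_u \equiv 0$, and then $u \equiv 0$ by continuity, giving $\lambda_F = \lambda_{F_C}$.

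The principal subtlety I anticipate is bookkeeping at the critical set: one must verify that the logarithmic singularities of $\log \lambda_F$ and $\log \lambda_{F_C}$ at each $c \in C$ cancel precisely, both pointwise (to make $u$ continuous) and in the sense of distributions (so that the Dirac masses in $\Delta \log \lambda_F$ and $\Delta \log \lambda_{F_C}$ drop out of $\Delta u$ and leave only the non-negative smooth part). This is straightforward because $F$ and $F_C$ have the same critical multiplicities at $C$ by hypothesis, but it is the point where the interaction between the two ingredients — Liouville's theorem on the structure of $\lambda$ and Kraus's construction of $\lambda_{F_C}$ — really has to be used.
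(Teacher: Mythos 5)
Your proof is correct. The paper itself offers no proof of this statement --- it is quoted from Kraus \cite[Theorem 4.4]{kraus} --- so there is no in-text argument to compare against, but your derivation is sound and self-contained, and it is built from exactly the ingredients the paper keeps on hand: the lower bound $\lambda_{F_C}\ge |B_C|\lambda_{\mathbb{D}}$ together with the Schwarz--Pick upper bound $\lambda_{F_C}\le\lambda_{\mathbb{D}}$ gives the forward squeeze (with $\lim_{r\to 1}\int_{|z|=r}\log|B_C|\,d\theta=0$ being the standard Jensen computation for Blaschke products), while the converse --- Liouville's theorem to write $\lambda=\lambda_F$, membership of $\lambda_F$ in the Perron family $\Phi_C$ to get $\lambda_F\le\lambda_{F_C}$, and a non-negative subharmonic comparison function whose circular means tend to zero --- is precisely the mechanism the paper itself deploys in Section \ref{sec:inject} to prove injectivity and minimality. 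Your bookkeeping at the critical set (cancellation of the $m_c\log|z-c|$ singularities pointwise and of the Dirac masses in the distributional Laplacian, so that $\Delta u = 4(\lambda_{F_C}^2-\lambda_F^2)\ge 0$ on all of $\mathbb{D}$) is the one genuinely delicate point, and you handle it correctly.
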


In Section \ref{sec:gap}, we will use ideas of Ahern and Clark to show that the above theorem can be alternatively formulated as:

\begin{corollary}
\label{kr-corollary}
Suppose $C$ is a Blaschke sequence in the disk.
An infinite Blaschke product $F \in \mathscr J$ is the maximal Blaschke product associated to $C$ if and only if the singular factor of $\inn F'$ is trivial, i.e.~if $\inn F' = B_C$.
\end{corollary}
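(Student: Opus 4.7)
The plan is to derive Corollary~\ref{kr-corollary} from Theorem~\ref{kr-theorem} by computing, for any $F \in \mathscr{J}$ with $\inn F' = B_C S_\mu$, the circular integral
\[
I(r) := \frac{1}{2\pi}\int_0^{2\pi} \log \frac{\lambda_F(re^{i\theta})}{\lambda_\mathbb{D}(re^{i\theta})}\, d\theta
\]
and establishing $\lim_{r \to 1} I(r) = -\mu(\mathbb{S}^1)$. Granted this, Theorem~\ref{kr-theorem} states $\lambda_F = \lambda_{F_C}$ if and only if $I(r) \to 0$, which happens (since $\mu \ge 0$) precisely when $\mu = 0$, i.e.~when the singular factor of $\inn F'$ is trivial; Liouville's theorem then promotes this equality of metrics to $F = F_C$ up to post-composition by a M\"obius transformation, giving both directions of the corollary.

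To evaluate $I(r)$, I would split $\log(\lambda_F/\lambda_\mathbb{D}) = \log|F'| - v$ where $v(z) := \log\bigl((1-|F(z)|^2)/(1-|z|^2)\bigr)$, and handle each piece. The $\log|F'|$ piece is routine: using the Ahern-Clark factorization $F' = B_C \cdot S_\mu \cdot \out F'$ from Lemma~\ref{ac-lemma}, the circular mean of $\log|B_C|$ tends to $0$ (Blaschke condition on the critical set); the mean of $\log|S_\mu|$ equals $-\mu(\mathbb{S}^1)$ at every radius, since $-\log|S_\mu| = P[\mu]$ has circular mean $\mu(\mathbb{S}^1)$; and the mean of $\log|\out F'|$ equals $\int_{\mathbb{S}^1}\log|F'(e^{i\theta})|\, dm$ at every radius. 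Summing yields $\tfrac{1}{2\pi}\int\log|F'(re^{i\theta})|\, d\theta \to -\mu(\mathbb{S}^1) + \int\log|F'|\, dm$.

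The main obstacle is the complementary asymptotic $\tfrac{1}{2\pi}\int v(re^{i\theta})\, d\theta \to \int \log|F'(e^{i\theta})|\, dm$, for then subtracting yields $I(r)/(2\pi) \to -\mu(\mathbb{S}^1)$. I would exploit that $\Delta v = 4(\lambda_\mathbb{D}^2 - \lambda_F^2) \ge 0$ by the Schwarz-Pick inequality, making $v$ a smooth subharmonic function on $\mathbb{D}$. The Julia-Carath\'eodory theorem, applied at boundary points where $F'\in\mathcal N$ has a finite radial limit, yields $v(r\zeta) \to \log|F'(\zeta)|$ at a.e.~$\zeta \in \mathbb{S}^1$. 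The mean identification then follows from the Jensen-Nevanlinna formula $\tfrac{1}{2\pi}\int v(re^{i\theta})\, d\theta = v(0) + \int_{|w|<r}\log(r/|w|)\, d\mu_v(w)$ with $d\mu_v = \tfrac{1}{2\pi}\Delta v\, dA$, combined with a Green-identity argument relating the limiting Riesz-mass integral $\int_{\mathbb{D}} \log(1/|w|)\, d\mu_v$ to the boundary integral of $\log|F'|$. The subtlety is that the pointwise limit of $\log(\lambda_F/\lambda_\mathbb{D})$ is $0$ at a.e.~boundary point, so the singular contribution $-\mu(\mathbb{S}^1)$ emerges only after averaging, concentrated at the measure-zero support of $\mu$; one cannot bypass this via dominated convergence and must work at the level of the subharmonic mean-value identity above.
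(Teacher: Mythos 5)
Your overall architecture is the paper's: you reduce the corollary to the identity $\lim_{r\to1}\frac{1}{2\pi}\int_{|z|=r}\log(\lambda_F/\lambda_{\mathbb D})\,d\theta=-\mu(\mathbb S^1)$ and then invoke Theorem \ref{kr-theorem} together with Liouville's theorem, and your treatment of the $\log|F'|$ piece (circular means of $\log|B_C|$, $\log|S_\mu|$ and $\log|\out F'|$) is correct --- it is exactly Lemma \ref{nevanlinna-gap} applied to $F'$, combined with the Ahern--Clark positivity $\sigma(F')\ge0$. The gap is in the piece you yourself flag as the main obstacle: the claim that the circular means of $v=\log\frac{1-|F|^2}{1-|z|^2}$ converge to $\int_{\mathbb S^1}\log|F'|\,dm$. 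This is precisely Lemma \ref{derivative-gap}, it carries the real analytic content of the corollary, and the tools you name do not establish it. Knowing that $v$ is subharmonic and that $v(r\zeta)\to\log|F'(\zeta)|$ a.e.\ (Julia--Carath\'eodory) is perfectly compatible with $\lim_{r\to1}\frac{1}{2\pi}\int v(re^{i\theta})\,d\theta=\int\log|F'|\,dm+\nu_s(\mathbb S^1)$ for some nonzero positive singular measure $\nu_s$: compare $v=P_{\nu_s}$, which is harmonic, has a.e.\ radial limit $0$, yet has constant circular mean $\nu_s(\mathbb S^1)$. Your ``Jensen formula plus Green identity'' step is exactly where such a singular boundary mass would have to be excluded, and no mechanism for excluding it is given; one cannot evaluate the limiting Riesz-mass integral $\int_{\mathbb D}\log(1/|w|)\,(\lambda_{\mathbb D}^2-\lambda_F^2)\,dA$ directly.

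What closes the gap --- and what the paper uses --- is a global pointwise inequality rather than an a.e.\ boundary statement: Dyakonov's inequality (\ref{eq:dyakonov-reverse}), $\frac{1-|F(z)|}{1-|z|}\le|\out F'(z)|$ (proved via Julia's lemma), dominates $v$ by the harmonic function $\log|\out F'|$ up to an error vanishing as $|z|\to1$, which kills the singular part and yields the upper bound; the matching lower bound comes from the Schwarz--Pick bound $v\ge\log|F'|$ together with the Ahern--Clark majorization $|F'(rx)|\le4|F'(x)|$, which licenses dominated convergence. With these two inequalities imported, your plan goes through and coincides with the paper's proof (Lemmas \ref{nevanlinna-gap}, \ref{ac-lemma} and \ref{derivative-gap}, packaged as Lemma \ref{ac-theory2} and combined with Theorem \ref{kr-theorem}); as written, the central asymptotic is asserted rather than proved.
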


In order to generalize the arguments of Kraus and Roth to allow for singular factors, we will need:
\begin{lemma}[Fundamental Lemma]
\label{fundamental-lemma}
For any inner function $F \in \mathscr J$,
\begin{equation}
\label{eq:fundamental-lemma}
\lambda_F \ge |\inn F'| \lambda_{\mathbb{D}}.
\end{equation}
In fact, $\lambda_F$ is the smallest metric of constant curvature $-4$ with this property.
\end{lemma}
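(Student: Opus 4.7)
The plan is to recast the claimed inequality as a comparison between a subharmonic function and its Poisson extension. Using the inner--outer factorization $F' = \inn F' \cdot \out F'$ and noting that both sides vanish to the same order at the critical points of $F$, the inequality $\lambda_F \ge |\inn F'|\lambda_{\mathbb D}$ is equivalent on $\mathbb D$ to
\[
u(z) := \log\!\left(\frac{1-|F(z)|^2}{1-|z|^2}\right) \le \log|\out F'(z)|.
\]
A direct computation gives $\Delta u = 4(\lambda_{\mathbb D}^2 - \lambda_F^2) \ge 0$ by Schwarz--Pick, so $u$ is subharmonic on $\mathbb D$. By Julia's lemma, for almost every $\theta$ the nontangential limit $(1-|F|^2)/(1-|z|^2) \to |F'(e^{i\theta})|$ exists and is finite (since $F' \in \mathcal N$ implies the angular derivative of $F$ exists a.e.), so the nontangential boundary values of $u$ equal $\log|F'(e^{i\theta})|$ a.e. Moreover, $\log|F'| \in L^1(\partial \mathbb D)$ because $F' \in \mathcal N$ is equivalent to having a finite outer factor, and by definition $\log|\out F'(z)|$ is the Poisson integral of $\log|F'(e^{i\theta})|$. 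The Fundamental Lemma thus reduces to the statement that a subharmonic function is dominated by the Poisson extension of its integrable nontangential boundary values.

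I would carry out this last step by analyzing the superharmonic function $w := \log|\out F'| - u$, whose nontangential boundary limit is $0$ almost everywhere. Its Riesz decomposition exhibits $w = G_\sigma + h$ as a positive Green's potential plus a harmonic function; since $G_\sigma$ vanishes nontangentially a.e.\ on $\partial \mathbb D$, the same holds for $h$. The main technical point is then to verify that $h$ is quasi-bounded, which forces $h \equiv 0$ and hence $w \ge 0$. Concretely, I would establish this by checking that the circular means of $u$ converge upward to $\int \log|F'(e^{i\theta})|\, d\theta/(2\pi)$ as $r \to 1$: combined with $\log|F'| \in L^1(\partial \mathbb D)$, this rules out any singular boundary contribution in the least harmonic majorant of $u$. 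This circular-mean convergence is where I expect the argument to require the most care.

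For the minimality statement, let $\lambda$ be a metric of constant curvature $-4$ with $\lambda \ge |\inn F'|\lambda_{\mathbb D}$, and write $\lambda = \lambda_G$ via Liouville's theorem. The hypothesis forces $G$ to be inner with critical set containing that of $F$ (otherwise $\lambda_G/\lambda_{\mathbb D}$ would tend to $0$ on a positive-measure subset of $\partial \mathbb D$, contradicting $|\inn F'| = 1$ a.e.\ there). An Ahlfors-style maximum principle applied to $\log(\lambda_F/\lambda_G)$, whose Laplacian equals $4(\lambda_F^2 - \lambda_G^2)$, then yields $\lambda_G \ge \lambda_F$: at any interior maximum one would obtain $\lambda_F \le \lambda_G$ at that point, while the common nontangential boundary behavior $\lambda_F/\lambda_{\mathbb D}, \lambda_G/\lambda_{\mathbb D} \to 1$ a.e.\ rules out the maximum escaping to the boundary.
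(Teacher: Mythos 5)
Your reduction of (\ref{eq:fundamental-lemma}) to $u := \log\frac{1-|F|^2}{1-|z|^2} \le \log|\out F'|$, and the subharmonicity of $u$, are correct, and the route is genuinely different from the paper's (which approximates $F$ by finite Blaschke products through Craizer's stable approximations and passes to the limit in $\lambda_{F_n}\ge|\inn F_n'|\lambda_{\mathbb{D}}$, where the inequality is elementary). But the crux of your argument is exactly the step you defer: the convergence of the circular means of $u$ to $\frac{1}{2\pi}\int\log|F'|\,d\theta$. The ``$\ge$'' half of that convergence is Fatou, using that $u\ge\log c_{F(0)}$ by Schwarz--Pick; this lower bound is also what makes the singular part of the least harmonic majorant of $u$ automatically non-negative, so your bookkeeping would indeed close once the means are shown to converge to the right value. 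The ``$\le$'' half, however, is not a routine verification: it is essentially the integrated form of the inequality being proved. In the paper it is the hard direction of Lemma \ref{derivative-gap}, and it is obtained there by averaging Dyakonov's inequality (\ref{eq:dyakonov-reverse}) --- i.e.\ the conclusion. Even the finiteness of $\lim_{r\to1}\int_{|z|=r}u\,dm$ (hence the existence of the harmonic majorant and of the Riesz decomposition of $w$) is part of what must be established. So as written the proof is incomplete, or circular if you intend to quote the mean-convergence identity: you would need an independent argument here, e.g.\ Julia's lemma applied pointwise at a.e.\ boundary point (Dyakonov's original proof), or the approximation argument, which is designed precisely to deliver the upper semicontinuity $\limsup_n\int\log|F_n'|\,dm\le\int\log|F'|\,dm$.

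The minimality argument has a gap of the same nature. The function $v=\log^+(\lambda_F/\lambda_G)$ is non-negative and subharmonic (note that the hypothesis $\lambda_G\ge|\inn F'|\lambda_{\mathbb{D}}$ forces the zeros of $\lambda_G$ to be \emph{contained in} the critical set of $F$, not to contain it), and it does have nontangential boundary values $0$ a.e.; but a non-negative subharmonic function on $\mathbb{D}$ with a.e.\ vanishing nontangential boundary values need not vanish --- $P_{\delta_1}$ is the standard counterexample --- so ``the maximum cannot escape to the boundary'' is not a valid principle in this setting. What is needed is that the circular means of $v$ tend to $0$, equivalently that no positive singular measure appears in its least harmonic majorant; this is exactly what the paper's gap computations (Lemmas \ref{ac-theory1} and \ref{ac-theory2}) provide by showing $\sigma(F')=\sigma(G')$ before the maximum principle is invoked.
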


Note that the minimality of the metric $\lambda_F$ implies that the map $F \to \inn F'$ from Theorem \ref{main-thm} is injective.
As explained above, the inequality (\ref{eq:fundamental-lemma}) holds for maximal Blaschke products. In Section \ref{sec:stable}, we will deduce the general case by considering finite approximations.

Using the factorization $F' = \inn F' \cdot \out F'$, one can rewrite (\ref{eq:fundamental-lemma}) as
\begin{equation}
\label{eq:dyakonov-reverse}
\frac{1-|F(z)|}{1-|z|} \le |\out F'|,
\end{equation}
which was first proved by Dyakonov in  \cite[Theorem 2.1]{dyakonov-coinvariant} using Julia's lemma.
The reader may also consult \cite[Corollary 2.1]{dyakonov-reverse} for additional remarks.
In view of the above discussion, the fundamental lemma may be viewed as a refinement of Dyakonov's theorem.

\subsection{Wedge of two metrics}
\label{sec:wedge}
Given two inner functions $F, G \in\mathscr J$,  consider the family
$\Phi_{F, G}$ of SK-metrics that are pointwise less than $\min(\lambda_F, \lambda_G)$.
This family is not empty: the metric $|\inn F'| \cdot |\inn G'| \cdot \lambda_{\mathbb{D}}$ is in it, as Lemma \ref{fundamental-lemma} shows.
Taking the supremum of conformal metrics in $\Phi_{F, G}$, we get a regular conformal metric $\lambda_{F \wedge G}$ of constant curvature $-4$. Therefore, it is the pullback of $\lambda_{\mathbb{D}}$ by a holomorphic function which we denote $H = F \wedge G$.

To see that the outer part of $H$ is trivial, i.e.~that $H$ is inner, we can use the clever argument of Kraus \cite[Proof of Theorem 1.2]{kraus}. The relevant equation here is:

\begin{equation}
\Biggl [ \frac{|H'|}{|H| \log \frac{1}{|H|}} \cdot |\inn H| \Biggr ] \cdot |\inn F'| \cdot |\inn G'| \, \le \, \frac{|H'|}{1-|H|^2},
\end{equation}
where the expression in the square brackets is bounded above by $\lambda_{\mathbb{D}}$ since it is an SK-metric (see \cite[Lemma 2.17]{kraus}).
One finds a contradiction by examining the behaviour of both sides as $z \to e^{i\theta}$ radially to a  point on the unit circle at which $|\out H(e^{i\theta})| < 1$ and
$|\inn H(e^{i\theta})| = |\inn F'(e^{i\theta})| = |\inn G'(e^{i\theta})| = 1$.


\subsection{Hull of a conformal metric}
\label{sec:hull}

For an SK-metric $\kappa$, let $\Psi_\kappa$ be the collection of all metrics of constant  curvature $-4$ which are greater than $\kappa$ and $\Phi_\kappa$ be the collection of all SK-metrics that are less than all metrics in $\Psi_\kappa$. Since $\Phi_\kappa$ is a Perron family, its supremum is a metric $\hat \kappa$ of curvature $-4$. We call $\hat \kappa$ the {\em hull} of $\kappa$. From the definition, it is clear that $\hat \kappa$ is the smallest metric of curvature $-4$ which exceeds $\kappa$.
In this terminology, Lemma \ref{fundamental-lemma} says that $\lambda_F$ is the hull of $|\inn F'| \lambda_{\mathbb{D}}$.

\section{Gap of a Nevanlinna function}
\label{sec:gap}

By definition, the Nevanlinna class $\mathcal N$ consists of holomorphic functions on the unit disk for which
\begin{equation}
\label{eq:nevanlinna-condition}
\sup_{0 < r < 1} \frac{1}{2\pi} \int_{|z|=r} \log^+|f(z)| d\theta < \infty,
\end{equation}
see for instance  \cite{duren}.
It is well known that (unless $f$ is identically zero) this condition is equivalent to the boundedness of
$$
\sup_{0 < r < 1} \frac{1}{2\pi} \int_{|z|=r} \bigl | \log|f(z)| \bigr | d\theta.
$$
Since $\log |f(z)|$ is a subharmonic function, $\lim_{r \to 1}  \frac{1}{2\pi} \int_{|z|=r} \log|f(z)| d\theta$ exists and is finite.
However, unlike the Hardy norms, it need not be the case that
$$
\lim_{r \to 1}  \frac{1}{2\pi} \int_{|z|=r} \log|f(z)| d\theta \, = \,   \frac{1}{2\pi} \int_{|z|=1} \log|f(z)| d\theta,
$$
where in the integral in the right hand side, we consider the radial boundary values of $f$ which are known to exist a.e.
To understand the cause of the discrepancy, we consider  the canonical decomposition of $f = B(S/S_1)O$ into a Blaschke product, a quotient of singular inner functions and an outer function:
\allowdisplaybreaks
\begin{align*}
B &= \prod_i -\frac{\overline{a_i}}{|a_i|} \cdot \frac{z-a_i}{1-\overline{a_i}z}, \\
S/S_1 & = \exp \biggl ( - \int_{\mathbb{S}^1} \frac{\zeta+z}{\zeta-z} d\sigma_\zeta \biggr ), \qquad \sigma \perp m, \\
O & = \exp \biggl ( \int_{\mathbb{S}^1} \frac{\zeta+z}{\zeta-z}  \log |f(\zeta)| dm_\zeta \biggr ).
\end{align*}
Given an interval $I$ on the unit circle, let $rI$ denote its radial projection onto the circle
$S_r = \{z: |z|=r\}$. Fubini's theorem and the dominated converge theorem show:
\begin{lemma}
\label{nevanlinna-gap}
$$
 \gap(f) \, := \, \frac{1}{2\pi} \int_{|z|=1}  \log|f(z)| d\theta -
\lim_{r \to 1} \biggl \{ \frac{1}{2\pi} \int_{|z|=r}  \log|f(z)| d\theta \biggr \} \, = \, \sigma(\mathbb{S}^1).
$$
More generally, if $I$ is an interval on the unit circle,
$$
 \gap_I(f) \, := \, \frac{1}{2\pi} \int_{I}  \log|f(z)| d\theta -
\lim_{r \to1} \biggl\{ \frac{1}{2\pi} \int_{rI}  \log|f(z)| d\theta \biggr\} \, = \, \sigma(I),
$$
provided the endpoints of $I$ do not charge $\sigma$.
\end{lemma}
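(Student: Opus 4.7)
The plan is to decompose $\log|f| = \log|B| + \log|S/S_1| + \log|O|$ according to the canonical factorization displayed just above the statement, and to compute the gap contributed by each summand separately. The first identity of the lemma is the special case $I = \mathbb{S}^1$ of the interval version, so I focus on arbitrary $I$.

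The central technical step is to relate a partial radial integral of a Poisson integral to a genuine Poisson integral of $\chi_I$. For any finite signed measure $\nu$ on $\mathbb{S}^1$, Fubini combined with the symmetry $P_{re^{i\theta}}(e^{i\phi}) = P_{re^{i\phi}}(e^{i\theta})$ of the Poisson kernel gives
$$
\frac{1}{2\pi}\int_I P[\nu](re^{i\theta})\, d\theta = \int_{\mathbb{S}^1} \biggl(\frac{1}{2\pi}\int_I P_{re^{i\phi}}(e^{i\theta})\, d\theta\biggr)\, d\nu(e^{i\phi}).
$$
The bracketed quantity is the Poisson extension of $\chi_I$ to $\mathbb{D}$ evaluated at $re^{i\phi}$; it is bounded by $1$ and tends to $\chi_I(e^{i\phi})$ as $r \to 1$ for every $\phi$ except the two endpoints of $I$. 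Dominated convergence therefore yields the limit $\nu(I)$, provided $\nu$ places no mass on those endpoints.

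Applied to $\nu = \sigma$ (permitted by the endpoint hypothesis), this gives $\frac{1}{2\pi}\int_{rI}\log|S/S_1|\, d\theta = -\frac{1}{2\pi}\int_{rI}P[\sigma]\, d\theta \to -\sigma(I)$, while the boundary integral of $\log|S/S_1|$ over $I$ vanishes because $\sigma \perp m$ forces $|S/S_1| = 1$ almost everywhere. Applied to the absolutely continuous measure $d\nu = \log|f|\, dm$, the partial outer integral $\frac{1}{2\pi}\int_{rI}\log|O|\, d\theta$ tends to $\frac{1}{2\pi}\int_I \log|f|\, d\theta$, exactly cancelling the corresponding boundary integral and so contributing nothing to the gap. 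For the Blaschke factor, $|B|=1$ a.e.\ on $\mathbb{S}^1$ kills the boundary term, while Jensen's formula gives $\frac{1}{2\pi}\int_0^{2\pi}\log|B(re^{i\theta})|\, d\theta \to 0$; since $\log|B|\le 0$, the sandwich $\int_0^{2\pi}\log|B|\, d\theta \le \int_{rI}\log|B|\, d\theta \le 0$ forces the partial integral to tend to $0$ as well. Summing the three contributions gives $\gap_I(f) = 0 - (-\sigma(I)) = \sigma(I)$.

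The only delicate point is the endpoint hypothesis: it is precisely what guarantees $P[\chi_I](re^{i\phi}) \to \chi_I(e^{i\phi})$ pointwise $\sigma$-a.e., justifying the dominated convergence step for the singular factor. Without it, the Poisson extension of $\chi_I$ approaches $1/2$ at the endpoints and the limit would pick up half of each endpoint mass of $\sigma$.
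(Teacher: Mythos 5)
Your proposal is correct and is precisely the argument the paper intends: the paper's entire ``proof'' is the sentence ``Fubini's theorem and the dominated convergence theorem show,'' applied to the canonical decomposition $f = B(S/S_1)O$ displayed just above the lemma, and you have filled in exactly those details (Fubini plus the symmetry of the Poisson kernel to reduce to the Poisson extension of $\chi_I$, dominated convergence justified by the endpoint hypothesis for the singular part, cancellation for the outer part, and the vanishing radial means for the Blaschke part).
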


\subsection{Applications to inner functions}
\label{sec:inner-derivative}

We now apply Lemma \ref{nevanlinna-gap} to study inner functions with derivative in Nevanlinna class.
We first give a slightly different perspective on a classical theorem due to Ahern and Clark:

\begin{lemma}[Ahern-Clark] 
\label{ac-lemma}
For an inner function $F \in \mathscr J$, its derivative admits a $BSO$ decomposition. In other words, the singular measure $\sigma(F') \ge 0$.
\end{lemma}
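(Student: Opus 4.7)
The plan is to combine the gap lemma (Lemma~\ref{nevanlinna-gap}) with the classical Schwarz-Pick inequality and Dyakonov's reverse Schwarz-Pick bound (\ref{eq:dyakonov-reverse}). Let $\sigma=\sigma(F')$ denote the signed singular measure in the Nevanlinna factorization of $F'$; the claim that $F'$ admits a $BSO$ decomposition is exactly $\sigma\ge 0$. By Lemma~\ref{nevanlinna-gap}, $\sigma(I)=\gap_I(F')$ for every arc $I$ whose endpoints carry no $\sigma$-mass, and since $\sigma$ is finite and singular only countably many points can have positive mass, such arcs suffice to determine the sign of $\sigma$. It therefore suffices to show
\[
\lim_{r\to 1}\int_{rI}\log|F'(re^{i\theta})|\,d\theta \;\le\; \int_I\log|F'(e^{i\theta})|\,d\theta
\]
for every such arc $I$.

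The key pointwise estimate combines Schwarz-Pick, $|F'(z)|(1-|z|^2)\le 1-|F(z)|^2$, with Dyakonov's bound (\ref{eq:dyakonov-reverse}), $(1-|F(z)|)/(1-|z|)\le|\out F'(z)|$. Factoring $1-|F|^2=(1-|F|)(1+|F|)$ and $1-|z|^2=(1-|z|)(1+|z|)$, these merge into
\[
\log|F'(z)|\;\le\;\log|\out F'(z)|+\log\frac{1+|F(z)|}{1+|z|}.
\]

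The final step is to integrate this bound over the arc at radius $r$ and pass to the limit. The quantity $\log|\out F'(re^{i\theta})|$ is the Poisson integral of $\log|F'(e^{i\theta})|\in L^{1}$, so the standard $L^{1}$-convergence of Poisson integrals yields $\int_I\log|\out F'(re^{i\theta})|\,d\theta\to\int_I\log|F'(e^{i\theta})|\,d\theta$. The correction $\log\frac{1+|F|}{1+|z|}$ is uniformly bounded on the disk by $\log 2$, and because $F$ is inner $|F(e^{i\theta})|=1$ almost everywhere on $\partial\mathbb{D}$, so the correction tends to $0$ at almost every boundary point; dominated convergence on $I$ then gives $\int_I\log\frac{1+|F(re^{i\theta})|}{1+r}\,d\theta\to 0$. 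Adding these estimates, $\limsup_{r\to1}\int_{rI}\log|F'(re^{i\theta})|\,d\theta\le\int_I\log|F'(e^{i\theta})|\,d\theta$, and since the gap lemma already provides the existence of the limit, $\gap_I(F')\ge 0$ as required. The only subtle point is making sure nothing is lost asymptotically in Schwarz-Pick; this is exactly where the inner hypothesis on $F$ is used, forcing the extra factor $(1+|F|)/(1+|z|)$ to converge to $1$ on the boundary.
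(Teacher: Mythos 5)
Your argument is correct, and it takes a genuinely different route from the paper. The paper reduces to the case $F(0)=0$ (so that $|F'|\ge 1$ on $\mathbb{S}^1$) and then invokes the Ahern--Clark pointwise estimate $|F'(rx)|\le 4|F'(x)|$, which controls the positive part of $\log|F'|$ by dominated convergence, while Fatou's lemma handles the negative part; the conclusion $\gap_I(F')\ge 0$ then follows. You instead sandwich $\log|F'|$ between two harmonic-type quantities: Schwarz--Pick from above by $\log\frac{1-|F|^2}{1-|z|^2}$, and Dyakonov's reverse inequality (\ref{eq:dyakonov-reverse}) to dominate that by $\log|\out F'|$ plus a correction bounded by $\log 2$ that vanishes a.e.\ on the boundary because $F$ is inner. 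The $L^1$-convergence of Poisson integrals and dominated convergence then give $\gap_I(F')\ge 0$ directly. Your route is essentially the ``$\ge$'' half of Lemma~\ref{derivative-gap} combined with the Schwarz lemma, and it trades the Ahern--Clark radial estimate for Dyakonov's theorem as the external input. One point you should make explicit: within this paper, (\ref{eq:dyakonov-reverse}) is presented as a rewriting of the Fundamental Lemma \emph{via the factorization} $F'=\inn F'\cdot\out F'$, which is precisely the statement you are proving, and the Fundamental Lemma itself is established later using Lemma~\ref{ac-lemma}. So you must take (\ref{eq:dyakonov-reverse}) in its original form from \cite{dyakonov-coinvariant} (where $\out F'$ means only the outer function with boundary modulus $|F'|$, well defined for any non-trivial Nevanlinna function, and the proof goes through Julia's lemma), not as a consequence of anything in this paper. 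With that provenance stated, there is no circularity and the proof is complete.
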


\begin{proof}
By Lemma \ref{frostman-derivative}, it suffices to consider the case when $F(0) = 0$. Then $|F'(x)| \ge 1$ on the unit circle, e.g.~see \cite[Theorem 4.15]{mashreghi}. In view of the fundamental inequality $$|F'(rx)| \le 4|F'(x)|, \qquad x \in \mathbb{S}^1, \quad 0 < r <1,$$ of Ahern and Clark \cite{ahern-clark}, the dominated convergence theorem  shows
$$
\int_{I} \log^+|F'(z)| dm - \lim_{r \to 1} \int_{rI} \log^+|F'(z)| dm =  0,
$$
for any interval $I \subset \mathbb{S}^1$. 
However, by Fatou's lemma, the negative part of the logarithm can only dissipate and therefore
$$
\gap_I(F') = \int_{I} \log|F'(z)| dm - \lim_{r \to 1} \int_{rI} \log|F'(z)| dm \ge  0.
$$
This completes the proof.
\end{proof}

The following lemma relates the notions $\gap(F')$ and $\lambda_F$\,:

\begin{lemma}
\label{derivative-gap}
Let $I \subset \mathbb{S}^1$ be an interval. 
If $F \in \mathscr J$ then 
\begin{equation}
\label{eq:derivative-gap}
\frac{1}{2\pi} \int_{I} \log|F'(z)| d\theta = \lim_{r \to 1} \frac{1}{2\pi} \int_{rI} \log \frac{1-|F(z)|^2}{1-|z|^2} d\theta.
\end{equation}
\end{lemma}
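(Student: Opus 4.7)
My plan is to show that $u(z) := \log\frac{1-|F(z)|^2}{1-|z|^2}$ converges to its radial boundary values $\log|F'(e^{i\theta})|$ in $L^1(\mathbb{S}^1)$ as $r \to 1$; the claim then follows for every arc $I$ by restriction. I begin by collecting three basic facts about $u$. A direct computation gives $\Delta\log(1-|F|^2) = -4\lambda_F^2$ and $\Delta\log(1-|z|^2) = -4\lambda_{\mathbb{D}}^2$, so $\Delta u = 4(\lambda_{\mathbb{D}}^2 - \lambda_F^2) \ge 0$ by Schwarz--Pick; thus $u$ is smooth and subharmonic on $\mathbb{D}$. Next, Julia--Carath\'eodory gives that the radial limit of $\frac{1-|F(z)|^2}{1-|z|^2}$ is the (a.e. finite) angular derivative $|F'(e^{i\theta})|$, so $u$ has radial boundary values $\log|F'|$ a.e. Finally, the Fundamental Lemma \ref{fundamental-lemma} states $\lambda_F \ge |\inn F'|\lambda_{\mathbb{D}}$, which rearranges as $u \le \log|\out F'|$ on $\mathbb{D}$; here $\log|\out F'|$ is harmonic and is the Poisson integral of $\log|F'|$.

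Next I will use the harmonic-majorant decomposition. Because $u$ is subharmonic with harmonic majorant $\log|\out F'|$, its least harmonic majorant $\tilde u$ exists, and by the standard theory of subharmonic functions with a harmonic majorant, $\tilde u$ equals the Poisson integral of the radial boundary values; thus $\tilde u = \log|\out F'|$. Set $p := \tilde u - u$, which is non-negative and superharmonic with boundary value $0$ a.e. and Riesz mass $\mu = \frac{2}{\pi}(\lambda_{\mathbb{D}}^2 - \lambda_F^2)\,dA$. The greatest harmonic minorant of $p$ is the Poisson integral of its boundary values, hence $0$, so the Riesz decomposition reduces to $p = G*\mu$, where $G(z,w) = \log\bigl|\frac{1-\bar w z}{z-w}\bigr|$ is Green's function of $\mathbb{D}$. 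Evaluating at the origin,
\[
\int_{\mathbb{D}}(-\log|w|)\,d\mu(w) \;=\; p(0) \;=\; \log|\out F'(0)| - \log(1-|F(0)|^2) \;<\; \infty,
\]
where finiteness uses $\log|F'| \in L^1(\mathbb{S}^1)$ (automatic for any non-constant $F' \in \mathcal{N}$).

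To finish, I will show $\int_{rI} p\,d\theta \to 0$. Integrating $p$ over the circle $|z|=r$ using Fubini and the standard formula $\frac{1}{2\pi}\int_0^{2\pi}G(re^{i\theta},w)\,d\theta = \min(-\log r, -\log|w|)$,
\[
\frac{1}{2\pi}\int_0^{2\pi} p(re^{i\theta})\,d\theta \;=\; \int_{\mathbb{D}}\min(-\log r, -\log|w|)\,d\mu(w).
\]
The integrand is dominated by the $\mu$-integrable function $-\log|w|$ (by the previous display) and tends pointwise to $0$ as $r \to 1$, so dominated convergence gives $\int_0^{2\pi} p\,d\theta \to 0$. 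Non-negativity of $p$ then yields $0 \le \int_{rI} p\,d\theta \le \int_0^{2\pi} p\,d\theta \to 0$ for every arc $I$. Combined with the $L^1$ convergence of the Poisson integral, $\int_{rI}\log|\out F'|\,d\theta \to \int_I \log|F'|\,d\theta$, this gives
\[
\lim_{r \to 1}\int_{rI}u\,d\theta \;=\; \lim_{r \to 1}\int_{rI}\tilde u\,d\theta - \lim_{r \to 1}\int_{rI}p\,d\theta \;=\; \int_I \log|F'|\,d\theta,
\]
as required. The main obstacle is this last step: the crude sandwich $\log|F'| \le u \le \log|\out F'|$ (together with Lemma \ref{ac-lemma}) only pins down the limit within a gap of $\sigma(I)$, and it is the Riesz representation of $p$ with finite Green potential at the origin that closes this gap uniformly in $I$.
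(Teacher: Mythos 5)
There is a genuine gap, and it sits exactly where you flagged your ``main obstacle'': the identification $\tilde u = \log|\out F'|$. The general principle you invoke --- that the least harmonic majorant of a subharmonic function with a harmonic majorant equals the Poisson integral of its radial boundary values --- is false. The least harmonic majorant is the Poisson integral of the weak-$*$ limit of the measures $u(re^{i\theta})\,d\theta$, and this limit measure may carry a nonzero singular part invisible to the radial boundary values (take $u = P_\mu$ for a positive singular measure $\mu$: it is harmonic, has radial boundary values $0$ a.e., yet is its own least harmonic majorant). This is precisely the ``gap'' phenomenon quantified by Lemma \ref{nevanlinna-gap}, which this whole section is designed to control. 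Concretely, from $u \le \log|\out F'|$ and $u \to \log|F'|$ a.e.\ radially you may only conclude $\tilde u = \log|\out F'| - P_\nu$ for some positive singular measure $\nu$; your Green-potential computation then yields $\lim_{r\to1}\int_{rI} u\,dm = \int_I \log|F'|\,dm - \nu(I)$, and proving $\nu = 0$ is exactly the content of the lemma, already for $I = \mathbb{S}^1$ (note $\tilde u(0) = \lim_{r\to1}\frac{1}{2\pi}\int_{|z|=r}u\,d\theta$). The Riesz/Green machinery only disposes of the interior Riesz mass --- which vanishes in the limit automatically, by the definition of the least harmonic majorant --- and cannot detect the boundary singular defect, so it does not close the gap. (You invoke the same false principle for the greatest harmonic minorant of $p$; there the conclusion happens to be correct, but only because $p$ is by construction the difference between $u$ and its least harmonic majorant.)

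What is missing is a lower bound strong enough to force $\nu = 0$. The paper gets it from the hyperbolic contraction $d_{\mathbb{D}}(F(0),F(z)) \le d_{\mathbb{D}}(0,z)$, which gives $u(z) \ge \log c_{F(0)}$ for a positive constant $c_{F(0)}$; together with your a.e.\ radial convergence, Fatou's lemma then yields $\liminf_{r\to1}\int_{rI} u\,dm \ge \int_I \log|F'|\,dm$, and the matching upper bound comes from averaging $u \le \log|\out F'|$ over $rI$ and using that $\log|\out F'|$ is the Poisson integral of the $L^1$ function $\log|F'|$. That two-sided sandwich proves the lemma directly and renders the Riesz decomposition unnecessary. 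One further caveat: do not quote the Fundamental Lemma (Lemma \ref{fundamental-lemma}) for the upper bound, since its proof in the paper passes through Theorem \ref{approximation-thm}, whose proof uses the easy half of the present lemma; quote instead Dyakonov's inequality (\ref{eq:dyakonov-reverse}), which has an independent proof via Julia's lemma.
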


\begin{proof}
From the contraction of the hyperbolic distance $d_\mathbb{D}(F(0),F(z)) \le d_\mathbb{D}(0,z)$, it follows that  the quotient $\frac{1-|F(z)|^2}{1-|z|^2} \ge c_{F(0)}$ is bounded below by a positive constant. 
By the Schwarz lemma,
$$
 \frac{1}{2\pi} \int_{rI} \max \bigl (\log|F'(z)|, \, \log c_{F(0)} \bigr ) d\theta
 \le  \frac{1}{2\pi} \int_{rI} \log \frac{1-|F(z)|^2}{1-|z|^2} d\theta.
$$
Applying the dominated convergence theorem like in the proof of Lemma \ref{ac-lemma} gives the $\le$ inequality in (\ref{eq:derivative-gap}).
For the $\ge$ direction, we average Dyakonov's inequality (\ref{eq:dyakonov-reverse})
 over $z \in rI$\,:
$$
\frac{1}{2\pi} \int_{rI} \log |\out F'(z)| d\theta
\ge \frac{1}{2\pi} \int_{rI} \log \frac{1-|F(z)|^2}{1-|z|^2} d\theta.
$$
The lemma follows after taking $r \to 1$ since $\log |\out F'(z)|$ is the harmonic extension of $\log|F'(z)|$ considered as a function on the unit circle.
\end{proof}

The reader may compare the above lemma with \cite[Theorem 3]{AAN}.

\subsection{Applications to conformal metrics}
\label{sec:ac-theory}

\begin{lemma}
\label{ac-theory1}
Suppose $F \in \mathscr J$ is an inner function for which
\begin{equation}
\lambda_F \ge |B_C S_\mu| \cdot \lambda_{\mathbb{D}}.
\end{equation}
Then, the singular measure
$\label{eq:inj2}
\sigma(F') \le \mu.
$
\end{lemma}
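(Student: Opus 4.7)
My plan is to take logarithms of the hypothesis, integrate over subarcs, and then exploit Lemma \ref{nevanlinna-gap} and Lemma \ref{derivative-gap} to identify each boundary term. Rewriting $\lambda_F \ge |B_CS_\mu|\lambda_{\mathbb{D}}$ produces the pointwise inequality
\[
\log|F'(z)| \;\ge\; \log|B_C(z)S_\mu(z)| \;+\; \log\frac{1-|F(z)|^2}{1-|z|^2},\qquad z\in\mathbb{D}.
\]
I would fix an interval $I\subset\mathbb{S}^1$ whose endpoints are not atoms of $\sigma(F')$ or of $\mu$, average both sides over the arc $rI$, divide by $2\pi$, and send $r\to 1$. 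Because each of the three resulting limits will turn out to exist in its own right, the inequality passes to the limit.

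Next I would compute the three limits separately. For the left side, Lemma \ref{nevanlinna-gap} applied to $F'\in\mathcal N$ yields
\[
\lim_{r\to 1}\frac{1}{2\pi}\int_{rI}\log|F'|\,d\theta \;=\; \frac{1}{2\pi}\int_I\log|F'|\,dm \;-\; \sigma(F')(I).
\]
For the first summand on the right, $|B_CS_\mu|=1$ a.e.\ on $\mathbb{S}^1$ and the singular measure of $B_CS_\mu$ equals $\mu$, so Lemma \ref{nevanlinna-gap} again gives
\[
\lim_{r\to 1}\frac{1}{2\pi}\int_{rI}\log|B_CS_\mu|\,d\theta \;=\; -\mu(I).
\]
Finally, Lemma \ref{derivative-gap} asserts directly that
\[
\lim_{r\to 1}\frac{1}{2\pi}\int_{rI}\log\frac{1-|F|^2}{1-|z|^2}\,d\theta \;=\; \frac{1}{2\pi}\int_I\log|F'|\,dm.
\]

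Assembling the three identities, the two $\frac{1}{2\pi}\int_I\log|F'|\,dm$ terms cancel and what remains is $-\sigma(F')(I)\ge-\mu(I)$, that is $\sigma(F')(I)\le\mu(I)$. Since the set of atoms of $\sigma(F')+\mu$ is countable, the class of admissible intervals is rich enough to separate Borel sets on $\mathbb{S}^1$, and the inequality lifts to $\sigma(F')\le\mu$ as measures. I expect the bulk of the work to be bookkeeping; the only genuinely delicate point is ensuring that each of the three limits exists individually, which is precisely the content of Lemma \ref{nevanlinna-gap} and Lemma \ref{derivative-gap}, the latter itself being powered by Dyakonov's form of the Schwarz lemma.
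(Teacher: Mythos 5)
Your proposal is correct and follows essentially the same route as the paper: the paper likewise integrates $\log\bigl(\lambda_F/(|B_C S_\mu|\lambda_{\mathbb{D}})\bigr)\ge 0$ over $rI$, lets $r\to 1$, and identifies the limit as $-\sigma(F')(I)+\mu(I)$ via Lemmas \ref{nevanlinna-gap} and \ref{derivative-gap}, restricting to generic intervals whose endpoints are not charged. Your rearrangement of the three terms and the final passage from intervals to measures are just the bookkeeping the paper leaves implicit.
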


\begin{proof}
Let $I \subset \mathbb{S}^1$ be an interval. From the definition of $\lambda_F$,
\begin{equation*}
 \int_{rI} \log \frac{\lambda_F}{|B_C S_\mu| \lambda_{\mathbb{D}}}  \, dm =
  \int_{rI} \log \biggl ( \frac{|F '| (1-|z|^2)}{1 - |F |^2} \biggr )  \, dm
  -  \int_{rI} \log |B_C S_\mu| dm.
\end{equation*}
By Lemmas \ref{nevanlinna-gap}
 and \ref{derivative-gap}, as $r \to 1$, this tends to
 $$0 \le - \sigma(F')(I) + \sigma(S_\mu)(I),
 $$
 at least if $I$ is generic (there are extra terms if the endpoints of $I$ charge any of these singular measures).
\end{proof}

\begin{remark}
The same conclusion holds under the seemingly weaker assumption $\lambda_F \ge |B_C S_\mu O_h|$ where 
$$
O_h  = \exp \biggl ( \int_{\mathbb{S}^1} \frac{\zeta+z}{\zeta-z} \, h(\zeta)dm_\zeta \biggr ), \qquad h: \mathbb{S}^1 \to \mathbb{R},
$$
 is an arbitrary outer function: the above computation results in $\sigma(F') \le \mu - h \, dm$. Since $\sigma(F') \perp h \,dm$ are mutually singular, we have $\sigma(F') \le \mu$ and $h \le 0$.
\end{remark}

Similar considerations show:
\begin{lemma}
\label{ac-theory2}
If $F,G \in \mathscr J$ and the interval $I \subset \mathbb{S}^1$  is generic for both $\sigma(F')$ and $\sigma(G')$, then
\begin{equation}
\lim_{r\to 1} \int_{|z|=r} \log (\lambda_F / \lambda_{G}) dm \, = \, - \sigma(F')(I) + \sigma(G')(I).
\end{equation}
In particular, if $\lambda_F \ge \lambda_G$ then $\sigma(F') \le \sigma(G')$.
\end{lemma}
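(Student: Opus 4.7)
The plan is to write the integrand as a combination of quantities for which Lemmas \ref{nevanlinna-gap} and \ref{derivative-gap} apply separately. Since $\log \lambda_F = \log|F'| - \log(1-|F|^2)$, I would decompose
\[
\log(\lambda_F/\lambda_G) \;=\; \bigl(\log|F'| - \log|G'|\bigr) \;-\; \log\frac{1-|F|^2}{1-|z|^2} \;+\; \log\frac{1-|G|^2}{1-|z|^2},
\]
where the $\log(1-|z|^2)$ has been inserted and subtracted on purpose, so that each of the four resulting averages over $rI$ has a finite limit as $r \to 1$ (without such a regrouping, $\int_{rI} \log(1-|F|^2)\,dm$ on its own tends to $-\infty$).

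Next, I would invoke Lemma \ref{nevanlinna-gap}, together with Lemma \ref{ac-lemma} to know that $\sigma(F'), \sigma(G') \ge 0$, to obtain
\[
\lim_{r \to 1} \int_{rI} \log|F'|\,dm \;=\; \int_I \log|F'|\,dm - \sigma(F')(I),
\]
and the analogous identity for $G$. Lemma \ref{derivative-gap} directly yields
\[
\lim_{r \to 1} \int_{rI} \log\frac{1-|F|^2}{1-|z|^2}\,dm \;=\; \int_I \log|F'|\,dm,
\]
and similarly for $G$. Combining the four identities with the signs dictated by the decomposition, the two unknown boundary integrals $\int_I \log|F'|\,dm$ and $\int_I \log|G'|\,dm$ each appear once with a plus and once with a minus sign and so cancel, leaving exactly $-\sigma(F')(I) + \sigma(G')(I)$, as claimed.

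For the ``in particular'' statement, the assumption $\lambda_F \ge \lambda_G$ makes the integrand non-negative on each circle $|z|=r$, so the limit is non-negative for every generic interval $I$, yielding $\sigma(F')(I) \le \sigma(G')(I)$. Since the singular measures $\sigma(F')$ and $\sigma(G')$ are finite, the set of non-generic endpoints is at most countable, so this inequality extends to all intervals and hence to all Borel sets by standard approximation, giving $\sigma(F') \le \sigma(G')$ as measures. The only real concern in the argument is bookkeeping: one must group the four terms so that each individual limit exists, which is precisely why the auxiliary $\log(1-|z|^2)$ is introduced; once that is done, the two preceding lemmas combine mechanically and the unwanted boundary integrals cancel in pairs.
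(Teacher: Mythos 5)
Your proposal is correct and follows essentially the same route as the paper, which proves this lemma by the "similar considerations" used for Lemma \ref{ac-theory1}: splitting $\log(\lambda_F/\lambda_G)$ via the auxiliary factor $1-|z|^2$ and combining Lemma \ref{nevanlinna-gap} with Lemma \ref{derivative-gap} so that the boundary integrals of $\log|F'|$ and $\log|G'|$ cancel. The bookkeeping and the extension from generic intervals to all Borel sets in the "in particular" part are both handled correctly.
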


Combining the above lemma with Theorem \ref{kr-theorem} gives Corollary \ref{kr-corollary}.

\begin{lemma}
\label{technical-lemma}
If $\lambda_G$ is a metric of curvature $-4$  such that $\lambda_G \ge |H| \lambda_{\mathbb{D}}$ for some 
 bounded holomorphic function $H \not\equiv 0$, then $G \in \mathscr J$.
\end{lemma}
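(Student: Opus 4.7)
The plan is to first invoke Liouville's theorem to identify $\lambda_G$ with $G^*\lambda_{\mathbb{D}}$ for a holomorphic map $G:\mathbb{D}\to\mathbb{D}$, and then to establish $G\in\mathscr J$ through a Jensen--Green calculation in the spirit of Section \ref{sec:ac-theory}.

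Setting $u := \log(\lambda_G/\lambda_{\mathbb{D}})$, the hypothesis combined with Schwarz--Pick gives $\log|H|\le u\le 0$. Since $H$ is a nonzero bounded holomorphic function, $\log|H|$ has integrable boundary values, so the circle averages $M_r(u):=(2\pi)^{-1}\int_0^{2\pi}u(re^{i\theta})\,d\theta$ remain bounded as $r\to 1$. A distributional computation --- using $\Delta\log|G'|=2\pi\sum_i m_i\delta_{c_i}$, where $\{c_i\}$ is the critical set of $G$ counted with multiplicities $m_i$, and $\Delta\log(1-|W|^2)=-4\lambda_W^2$ for $W=G$ or $W=z$ --- gives
\[
\Delta u \;=\; 2\pi\sum_i m_i\delta_{c_i} \;+\; 4(\lambda_G^2-\lambda_{\mathbb{D}}^2)\,dA.
\]
Plugging this into Jensen's formula writes $M_r(u)-u(0)$ as the difference of the nonnegative, nondecreasing-in-$r$ quantities $\sum_{|c_i|<r}m_i\log(r/|c_i|)$ and $(2/\pi)\int_{|z|<r}\log(r/|z|)(\lambda_{\mathbb{D}}^2-\lambda_G^2)\,dA$. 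Boundedness of $M_r(u)$ forces both to remain bounded, giving (i)~the critical set of $G$ is a Blaschke sequence, and (ii)~$\int_{\mathbb{D}}\log(1/|z|)(\lambda_{\mathbb{D}}^2-\lambda_G^2)\,dA<\infty$.

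From $\Delta\log((1-|G|^2)/(1-|z|^2))=4(\lambda_{\mathbb{D}}^2-\lambda_G^2)$ and (ii), Jensen's formula shows that $M_r(\log((1-|G|^2)/(1-|z|^2)))$ is bounded; Schwarz--Pick gives $\log((1-|G|^2)/(1-|z|^2))\ge\log|G'|$, so $M_r(\log|G'|)$ is bounded as well, and subharmonicity of $\log|G'|$ off the isolated (by (i)) critical set yields $G'\in\mathcal N$. To finish, note that any Nevanlinna function has a finite nontangential boundary value almost everywhere, so $(1-|z|^2)|G'(re^{i\theta})|\to 0$ at a.e.~$\theta$ as $r\to 1$. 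If the set $\{\theta:|G^*(\theta)|<1\}$ were of positive measure, then at a.e.~such $\theta$
\[
\frac{\lambda_G}{\lambda_{\mathbb{D}}}(re^{i\theta}) \;=\; \frac{(1-r^2)|G'(re^{i\theta})|}{1-|G(re^{i\theta})|^2} \;\longrightarrow\; \frac{0}{1-|G^*(\theta)|^2}=0,
\]
contradicting $\lambda_G/\lambda_{\mathbb{D}}\ge|H|$ since $|H^*|>0$ a.e.; thus $G$ is inner.

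The main obstacle is expected to be the distributional Jensen--Green bookkeeping: one must isolate the positive point-mass contribution from critical points of $G$ from the smooth, signed curvature-difference term on the absolutely continuous side. Once these are disentangled, the monotonicity of each piece immediately forces (i) and (ii), and the rest of the argument reduces to standard boundary-value manipulations.
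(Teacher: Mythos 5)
Your overall skeleton (derive the Nevanlinna condition for $G'$ from the metric inequality, then rule out a nontrivial outer part via radial boundary behaviour) matches the paper's, and your final step is fine. But two intermediate steps do not close as written. First, from $M_r(u)-u(0)=A(r)-B(r)$, where $A(r)=\sum_{|c_i|<r}m_i\log(r/|c_i|)$ and $B(r)$ is the nonnegative area term, you assert that boundedness of $M_r(u)$ forces both $A$ and $B$ to be bounded. That implication is false: a difference of two nonnegative nondecreasing functions can stay bounded while both diverge. You need an independent bound on one of them. The cheap one (and essentially the paper's first observation) is that $\lambda_G(c_i)=0\ge |H(c_i)|\,\lambda_{\mathbb{D}}(c_i)$ forces every critical point of $G$ to be a zero of $H$ of at least the same multiplicity, so $A(r)$ is dominated by the Jensen sum of the bounded function $H$ and is bounded; then $B(r)=A(r)+u(0)-M_r(u)$ is bounded as well.

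Second, boundedness of $M_r(\log|G'|)$ does not give $G'\in\mathcal N$: the Nevanlinna condition concerns $\log^+|G'|$, and the circle means of $\log|f|$ can remain bounded while those of $\log^+|f|$ blow up (e.g.\ $f=\exp\bigl(i\tfrac{1+z}{1-z}\bigr)$ has constant means of $\log|f|$ yet $f\notin\mathcal N$). You must also control the negative part of $\log|G'|$. This is available pointwise in your setup: writing $w=\log\tfrac{1-|G|^2}{1-|z|^2}\ge\log c_{G(0)}$, you have $\log|G'|=u+w\ge \log|H|+\log c_{G(0)}$, and $\int(\log|H|)^-\,dm$ is uniformly bounded for $H\in H^\infty$, $H\not\equiv 0$; combined with your upper bound on $M_r(\log|G'|)$ this yields $\sup_r\int_{r\mathbb{S}^1}\log^+|G'|\,dm<\infty$. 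With these two repairs your Green--Riesz computation does prove the lemma; note that the paper avoids the Riesz decomposition altogether by directly averaging $\log\bigl(\lambda_G/(|H|\lambda_{\mathbb{D}})\bigr)$ and using the pointwise bound $\tfrac{1-|G|^2}{1-|z|^2}\ge\max(|G'|,c_{G(0)})$, which packages both of the missing controls into one inequality.
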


\begin{proof}
Since $H$ is a bounded holomorphic function, $\gamma_1 = \lim_{r \to 1} \int_{r\mathbb{S}^1}\log |H| dm$ is  finite.
The condition  $\lambda_G \ge |H| \lambda_{\mathbb{D}}$  implies that the zeros of $G'$ form a Blaschke sequence, which in turn implies that the integral $\gamma_2 = \lim_{r \to 1} \int_{r\mathbb{S}^1}\log |G'| dm$ is also finite.
An inspection of  the inequality
$$
0 \, \le \, \liminf_{r \to 1} \int_{r\mathbb{S}^1}\log \frac{\lambda_G}{|H|\lambda_\mathbb{D}} dm
\, \le \, -\gamma_1 + \gamma_2 - \limsup_{r \to 1} \int_{r\mathbb{S}^1} \log^+ |G'| dm
$$
 then shows that $G'$ satisfies the Nevanlinna condition (\ref{eq:nevanlinna-condition}).
 It remains to prove that the outer part of $G$ is trivial, so that $G$ is an inner function. 
 If this were not the case, then for a positive measure set of directions
$\theta \in [0,2\pi)$, $\limsup_{r \to 1} \lambda_G (re^{i\theta})$ would be finite.
However, this contradicts the assumption $\lambda_G \ge  |H| \lambda_{\mathbb{D}}$, since by the Lusin-Privalov theorem,  the radial limit of $H(re^{i\theta})$ is non-zero almost everywhere.
\end{proof}
\subsection{Injectivity and minimality}
\label{sec:inject}

We now show the injectivity part of Theorem \ref{main-thm}.
If there were two functions $F, G \in \mathscr J$  with $\inn F' = \inn G' =  B_C S_\mu$, then
\begin{equation}
\lambda_F \, \ge \, \lambda_{F \wedge G} \, \ge \,  |B_C S_\mu| \cdot \lambda_{\mathbb{D}}.
\end{equation}
Lemmas \ref{ac-theory1} and \ref{ac-theory2} imply that $(F \wedge G)'$ has the same inner part as $F'$. From the definition of curvature, $\Delta \log (\lambda_F / \lambda_{F \wedge G}) = 4(\lambda_F^2 - \lambda_{F \wedge G}^2)$. Hence
$\log (\lambda_F / \lambda_{F \wedge G})$ is subharmonic and non-negative, yet
\begin{equation}
\lim_{r\to 1} \int_{|z|=r} \log (\lambda_F / \lambda_{F \wedge  G}) dm \, \to \, 0,
\end{equation}
 which forces
$ \log (\lambda_F / \lambda_{F \wedge  G}) = 0$. We deduce that $\lambda_F = \lambda_{F  \wedge G} = \lambda_G$ and therefore $F = G$ up to post-composition with a M\"obius transformation by \cite[Theorem 5.1]{conf-metrics}.

Minimality is similar.
Given an inner function $F \in \mathscr J$, we now show that  $\lambda_F$ is the smallest metric of constant curvature $-4$ that exceeds $|\inn F'| \lambda_{\mathbb{D}}$.
Following Section \ref{sec:hull}, we consider the hull  $\lambda$ of the metric $|\inn F'| \lambda_{\mathbb{D}}$. 
The inequalities
\begin{equation}
\label{eq:min-ineq}
\lambda_F \, \ge \,  \lambda \, \ge \,  |\inn F'| \lambda_{\mathbb{D}}
\end{equation}
reveal that $\lambda$ has exactly the same zero set as $\lambda_F$ (counted with multiplicity). In particular, all zeros of $\lambda$ have integral multiplicities. 
Proceeding like in the proof of injectivity, we obtain  
$\lim_{r\to 1} \int_{|z|=r} \log (\lambda_F / \lambda) dm 
\,  \to \,  0$ and $\lambda = \lambda_F$ as desired.

\section{Stable approximations}
\label{sec:stable}

In this section, we study approximations of inner functions by finite and maximal Blaschke products. We are particularly interested in {\em stable} approximations where the inner-outer decomposition is preserved in the limit:

\begin{definition}
Suppose $\{ F_n \} \subset \mathscr J$ is a sequence of inner finctions which converge uniformly on compact subsets of the disk to an inner function $F$. We say that $F_n$ is a {\em (Nevanlinna) stable approximation} of $F$ if
\begin{equation}
\label{eq:goodness-condition}
\inn F' = \lim_{n \to \infty} \inn F_n', \qquad
\out F' = \lim_{n \to \infty} \out F_n'.
\end{equation}
\end{definition}

In general, we have inequalities in one direction:

\begin{theorem}
\label{approximation-thm}
Suppose $\{ F_n \} \subset \mathscr J$ is a sequence of finite Blaschke products which converge uniformly on compact subsets of the disk to a holomorphic function $F: \mathbb{D} \to \mathbb{D}$. Also assume that the  $B_n = \inn F'_n$ converge to an inner function $I$.
Then $F \in \mathscr J$ and the following inequalities hold:
\begin{align}
\label{eq:fa1}
\sigma(F') & \le \sigma(I), \\
\label{eq:fa2}
|\inn F'| & \ge |I|, \\
\label{eq:fa3}
\int_{\mathbb{S}^1} \log |F'| dm & \le \lim_{n \to \infty} \int_{\mathbb{S}^1} \log |F_n'| dm.
\end{align}
Furthermore, either all of the above inequalities are equalities or none of them are.

\end{theorem}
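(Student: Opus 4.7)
The plan is to propagate the Fundamental Lemma (Lemma \ref{fundamental-lemma}) through the limit and then harvest each of the three inequalities from the Ahern--Clark machinery of Section \ref{sec:ac-theory}. For each $n$, Lemma \ref{fundamental-lemma} gives $\lambda_{F_n} \ge |B_n|\lambda_{\mathbb{D}}$. Since $I$ is a nontrivial inner function, $F$ cannot be constant, so on compact subsets of $\mathbb{D}$ we have $|F| < 1$ and $\lambda_{F_n} \to \lambda_F$ locally uniformly, while $|B_n| \to |I|$ locally uniformly. Passing to the limit yields
$$\lambda_F \ge |I|\lambda_{\mathbb{D}},$$
and since $I$ is a bounded nonzero holomorphic function, Lemma \ref{technical-lemma} gives $F \in \mathscr J$. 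Writing $I = B_\infty S_{\sigma(I)}$ and applying Lemma \ref{ac-theory1} to the above inequality immediately yields (\ref{eq:fa1}), i.e.\ $\sigma(F') \le \sigma(I)$.

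For (\ref{eq:fa2}), I would first identify the Blaschke factors of $\inn F'$ and $I$. Since $F_n' \to F'$ locally uniformly and the zeros of $B_n$ coincide with those of $F_n'$, Hurwitz's theorem ensures that the zeros of $B_n$ which remain in compact subsets of $\mathbb{D}$ converge, with multiplicity, to the zeros of $F'$; those limits are precisely the zeros of $B_\infty$, so $B_\infty$ and the Blaschke factor $B_{\zeros F'}$ of $\inn F'$ coincide up to a unimodular constant. Combining this with $\sigma(F') \le \sigma(I)$ and the pointwise monotonicity $|S_\mu(z)| = \exp\bigl(-\int\tfrac{1-|z|^2}{|\zeta-z|^2}\,d\mu(\zeta)\bigr)$ in $\mu$ gives $|\inn F'| = |B_{\zeros F'}||S_{\sigma(F')}| \ge |B_\infty||S_{\sigma(I)}| = |I|$.

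For (\ref{eq:fa3}), I would apply Jensen's formula: since the finite Blaschke product $F_n$ has $F_n' = B_n \cdot O_n$ with $O_n$ outer, one obtains $\int_{\mathbb{S}^1}\log|F_n'|\,dm = \log|F_n'(0)| - \log|B_n(0)|$, whereas Lemma \ref{nevanlinna-gap} together with Jensen for the Blaschke factor of $F'$ gives $\int_{\mathbb{S}^1}\log|F'|\,dm = \log|F'(0)| - \log|B_{\zeros F'}(0)| + \sigma(F')(\mathbb{S}^1)$. Evaluating $B_n \to I$ at $z=0$ gives $|B_n(0)| \to |B_\infty(0)|\,e^{-\sigma(I)(\mathbb{S}^1)}$, and using the Blaschke identification $|B_\infty(0)| = |B_{\zeros F'}(0)|$ from the previous paragraph, the difference collapses to
$$\lim_{n\to\infty} \int_{\mathbb{S}^1}\log|F_n'|\,dm - \int_{\mathbb{S}^1}\log|F'|\,dm = \sigma(I)(\mathbb{S}^1) - \sigma(F')(\mathbb{S}^1) \ge 0.$$
All three inequalities have deficit controlled by the nonnegative quantity $\sigma(I)(\mathbb{S}^1) - \sigma(F')(\mathbb{S}^1)$: given $\sigma(F') \le \sigma(I)$, equality in (\ref{eq:fa1}) is simply $\sigma(F') = \sigma(I)$; equality in (\ref{eq:fa2}) at any interior point forces $\inn F'/I$ to be a unimodular constant by the maximum principle, hence again $\sigma(F') = \sigma(I)$; and equality in (\ref{eq:fa3}) is $\sigma(F')(\mathbb{S}^1) = \sigma(I)(\mathbb{S}^1)$, which combined with $\sigma(F') \le \sigma(I)$ also forces $\sigma(F') = \sigma(I)$. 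The three equalities are therefore mutually equivalent. The main technical point to watch is aligning the Blaschke factors via Hurwitz so that the boundary bookkeeping at $z=0$ cleanly isolates the singular masses; degenerate limits (constant $F$, or trivial $I$) are ruled out by the standing nontriviality assumption on $I$.
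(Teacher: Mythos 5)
Your proof is correct and follows essentially the same route as the paper: pass to the limit in $\lambda_{F_n} \ge |B_n|\lambda_{\mathbb{D}}$, use Lemma \ref{technical-lemma} to get $F\in\mathscr J$, run the gap computation (packaged as Lemma \ref{ac-theory1}) for (\ref{eq:fa1}), match the Blaschke factors via Hurwitz for (\ref{eq:fa2}), and read off (\ref{eq:fa3}) at the origin --- your Jensen bookkeeping is exactly the paper's comparison of $|\out F'(0)|$ with $\lim|\out F_n'(0)|$, unwound, and it gives the same clean equivalence of the three equality cases. One caution: you should not cite Lemma \ref{fundamental-lemma} for the $F_n$, since in the paper's logical order that lemma is deduced \emph{from} this theorem (via stable approximations); for finite Blaschke products the inequality $\lambda_{F_n}\ge|B_n|\lambda_{\mathbb{D}}$ is available directly because a finite Blaschke product is the maximal Blaschke product of its (finite) critical set, which is what the paper implicitly uses. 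A second minor point: your $z=0$ computation tacitly assumes $F'(0)\neq 0$ (equivalently $I(0)\neq 0$); evaluate the outer parts at a generic point, or divide out the common zero at the origin, to cover that degenerate case.
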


\begin{proof}
{\em Step 1.}
Taking  $n \to \infty$ in $\lambda_{F_n} \ge |B_n| \lambda_{\mathbb{D}}$ gives
$\lambda_{F} \ge |I| \lambda_{\mathbb{D}}$. Lemma \ref{technical-lemma} shows that $F$ is inner with $F' \in \mathcal N$.
For any $0<r<1$ and interval $E \subset \mathbb{S}^1$, we have
$$
\int_{rE} \log \frac{\lambda_{F}}{\lambda_{\mathbb{D}}} \, dm  \ge  \int_{rE} \log |I| dm.
$$
Taking $r \to 1$ and using Lemma \ref{nevanlinna-gap} as well as the {\em easy} part of Lemma \ref{derivative-gap} shows
$$
- \sigma(F')(E)  \, = \, - \int_{E} \log |F'| dm + \lim_{r \to 1} \int_{rE} \log |F'| dm  \, \ge \, -\sigma(I)(E),
$$
provided the endpoints of $E$ do not charge $\sigma(I)$ and $\sigma(F')$. This proves the first inequality (\ref{eq:fa1}).

\medskip

{\em Step 2.} Clearly, $\inn F'$ and $I = \lim (\inn F'_n)$ have the same zeros in the unit disk but may have different singular factors. However, it is easy to see that for singular inner functions, one has the inequality $S_1 \le S_2$ if and only if $\sigma(S_1) \ge \sigma(S_2)$. The ``if'' direction is obvious, while the ``only if'' direction follows from the identity $$0 \, \le \, \lim_{r \to 1} \int_{rE} \log|S_2/S_1| dm \, = \, -\sigma(S_2)(E) + \sigma(S_1)(E),$$ valid for any generic interval $E \subset \mathbb{S}^1$ whose endpoints do not charge the measures $\sigma(S_1)$ and $\sigma(S_2)$.
This proves (\ref{eq:fa2}) and shows that the equality cases in (\ref{eq:fa1}) and (\ref{eq:fa2}) coincide.

Since $F'_n \to F'$ uniformly on compact subsets of the disk, (\ref{eq:fa2})  is equivalent to the inequality $|\out F'(z)| \le \bigl |\lim_{n \to \infty} \out F_n'(z) \bigr|$. Setting $z = 0$ and taking logarithms gives (\ref{eq:fa3}). However, if (\ref{eq:fa3}) is an equality, then by the maximum modulus principle, we must have $|\out F'(z)| = \bigl |\lim_{n \to \infty} \out F_n'(z) \bigr|$ for all $z \in \mathbb{D}$, since outer factors do not vanish.
This completes the proof.
\end{proof}

 \begin{remark}
After we prove the fundamental lemma (Lemma \ref{fundamental-lemma}), the assumption that the $F_n$ be finite Blaschke products in
Theorem \ref{approximation-thm} will no longer be necessary.
 \end{remark}
 
For some applications, we need to slightly vary the assumptions in the above theorem:

\begin{theorem}
\label{approximation-thm2}
In the context of Theorem \ref{approximation-thm}, suppose instead that the $B_n$ converge to a non-zero holomorphic function $H: \mathbb{D} \to \mathbb{D}$ with the inner-outer decomposition $H = I \cdot O$. Then, the  inequalities (\ref{eq:fa1})--(\ref{eq:fa3}) still hold. The equality case in (\ref{eq:fa3}) implies that $\{F_n\}$ is a stable sequence, in particular, the outer factor $O = 1$ is trivial and the $B_n$ converge to an inner function.
\end{theorem}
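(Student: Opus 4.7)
The proof follows the template of Theorem \ref{approximation-thm} almost verbatim, the only new wrinkle being that the limit $H = I \cdot O$ of $B_n = \inn F_n'$ now carries a non-trivial outer factor $O$, which necessarily satisfies $|O| \le 1$ on $\mathbb{S}^1$. First I would pass to the limit in $\lambda_{F_n} \ge |B_n|\lambda_{\mathbb{D}}$ to obtain $\lambda_F \ge |H|\lambda_{\mathbb{D}}$ and invoke Lemma \ref{technical-lemma} with $H$ as the test function to place $F \in \mathscr J$.

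For (\ref{eq:fa1}), I would repeat Step 1 of the previous proof, now carrying the extra term $\log|O|$ along. Applying Lemmas \ref{nevanlinna-gap} and \ref{derivative-gap} to $\log(\lambda_F/\lambda_{\mathbb{D}}) \ge \log|I| + \log|O|$ yields, for intervals $E$ whose endpoints do not charge $\sigma(F')$ or $\sigma(I)$,
$$
-\sigma(F')(E) \, \ge \, -\sigma(I)(E) \, + \, \int_E \log|O|\,dm.
$$
As in the remark following Lemma \ref{ac-theory1}, the mutual singularity of $\sigma(F')$ with Lebesgue measure, together with $\log|O| \le 0$, lets one absorb the outer contribution and conclude $\sigma(F') \le \sigma(I)$. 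Then (\ref{eq:fa2}) follows as in Step 2 of the previous proof, since $\inn F'$ and $I$ share the same Blaschke part $B$ by Hurwitz's theorem; in particular, $|H| = |I|\cdot|O| \le |I| \le |\inn F'|$ on $\mathbb{D}$.

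For (\ref{eq:fa3}), I would note that $\lim_n \out F_n' = \lim_n F_n'/B_n = F'/H$ extends holomorphically through the zeros of $H$. Writing $\inn F' = B \cdot S_{F'}$ and $I = B \cdot S_H$, a direct computation gives $F'/H = \out F'/(S^* \cdot O)$, where $S^* := S_H/S_{F'}$ is singular inner by (\ref{eq:fa1}). Since $|S^*|,\,|O| \le 1$, the pointwise inequality $|\out F'(z)| \le |\lim_n \out F_n'(z)|$ holds on $\mathbb{D}$; evaluating at $z = 0$ and taking logarithms gives (\ref{eq:fa3}) via $\log|\out g(0)| = \int \log|g|\,dm$.

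Finally, equality in (\ref{eq:fa3}) at $z = 0$ lifts by the maximum modulus principle (applied to the bounded holomorphic function $\out F'/\lim \out F_n'$) to the identity $|\out F'| \equiv |\lim_n \out F_n'|$ on $\mathbb{D}$. This forces $|S^* \cdot O| \equiv 1$, and since both factors are $\le 1$, we conclude $S^* \equiv 1$ (so $\inn F' = I = \lim_n \inn F_n'$) and $O$ a unimodular constant (so $H$ is inner). The identity $\out F' = \lim_n \out F_n'$ follows on dividing, confirming stability. The main subtlety throughout is the bookkeeping of $\log|O|$ in Step 1: one must recognize that it contributes only an absolutely continuous term to the gap computation, which is annihilated by the singularity of $\sigma(F')$.
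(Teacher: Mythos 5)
Your proposal is correct and follows essentially the same route as the paper's (sketched) proof: pass to the limit in $\lambda_{F_n}\ge |B_n|\lambda_{\mathbb{D}}$, use Lemma \ref{technical-lemma}, absorb the absolutely continuous contribution $\log|O|\,dm$ via the singularity of $\sigma(F')$ and $\sigma(I)$ (this is exactly the content of the remark after Lemma \ref{ac-theory1} that the paper invokes), and settle the equality case by the maximum modulus principle applied to $\out F'/\lim_n \out F_n' = S^*O$. Your write-up merely makes explicit the factorization bookkeeping that the paper leaves to the reader.
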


The proof of \ref{approximation-thm2} is nearly identical to that of Theorem \ref{approximation-thm}, so we only sketch the details. Observe that since $\| H \|_\infty \le 1$, we have 
$|O(z)| \le 1$ and $|I(z)| \ge |H(z)|$ for $z \in \mathbb{D}$. Following Step 1 of the proof of Theorem \ref{approximation-thm}, we obtain the inequality
$\lambda_F \ge |I \cdot O| \lambda_{\mathbb{D}}$. We may still use Lemma \ref{technical-lemma} to conclude that
$F \in \mathscr J$. The remark after Lemma \ref{ac-theory1} allows us to conclude (\ref{eq:fa1}) and (\ref{eq:fa2}) in this more general case as well.
We may weaken (\ref{eq:fa2}) to $|\inn F'| \ge |H|$, which is equivalent to (\ref{eq:fa3}). This time, the equality case in (\ref{eq:fa3}) forces $I = H$ and $O = 1$.
 
 \begin{remark}
In view of Lemma \ref{technical-lemma}, if a sequence of finite Blaschke products $F_n$ converges to a function $F \not\in \mathcal N$, then $H = \lim(\inn F'_n)$ must be 0.
 \end{remark}
 
Craizer's argument from \cite[Lemma 5.4]{craizer} shows:

\begin{lemma}
\label{craizer-lemma}
Any inner function $F \in \mathscr J$ admits a stable approximation.
\end{lemma}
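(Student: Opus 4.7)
The plan is to construct, for $F \in \mathscr J$, a sequence of finite Blaschke products $F_n \to F$ uniformly on compact subsets of $\mathbb{D}$ and then invoke Theorem \ref{approximation-thm}: once equality is shown in one of (\ref{eq:fa1})--(\ref{eq:fa3}), stability is automatic.

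First I would normalize $F(0) = 0$ via a Frostman shift, which is permissible by Lemma \ref{frostman-derivative} and ensures $|F_n'| \ge 1$ on $\mathbb{S}^1$ for any finite Blaschke product $F_n$ vanishing at the origin. A candidate approximating sequence is produced by combining two classical facts: (i) by Frostman's theorem, $(F-\alpha)/(1-\bar\alpha F)$ is a Blaschke product for a.e.\ $\alpha \in \mathbb{D}$, and (ii) each such Blaschke product is the local uniform limit of its finite truncations (first $N$ zeros). A diagonal selection with $\alpha \to 0$ and $N \to \infty$ yields finite Blaschke products $F_n \to F$ on compacts.

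By Theorem \ref{approximation-thm}, stability reduces to equality in (\ref{eq:fa3}). Applying Jensen's formula to $F_n'$ (whose factorization $F_n' = B_{C_n} \cdot \out F_n'$ has trivial singular part, with $C_n$ the critical set of $F_n$) and to $F' = B_C S_\mu \out F'$ gives
\begin{align*}
\int_{\mathbb{S}^1} \log|F_n'|\, dm &= \log|F_n'(0)| + \sum_{c \in C_n} \log(1/|c|), \\
\int_{\mathbb{S}^1} \log|F'|\, dm &= \log|F'(0)| + \sum_{c \in C} \log(1/|c|) + \mu(\mathbb{S}^1).
\end{align*}
Since $F_n \to F$ on compacts forces $\log|F_n'(0)| \to \log|F'(0)|$, the stability condition reduces to the critical-entropy identity
\begin{equation*}
\sum_{c \in C_n} \log(1/|c|) \,\longrightarrow\, \sum_{c \in C} \log(1/|c|) + \mu(\mathbb{S}^1).
\end{equation*}

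The hardest part will be arranging this entropy identity: the critical points of $F_n$ that do not track $C$ must cluster near $\supp \mu$ with precisely the right logarithmic density to account for $\mu(\mathbb{S}^1)$. Craizer's iterative Schur-type extraction of M\"obius factors from $F$ accomplishes this by controlling $\sum \log(1/|c|)$ at each step; alternatively, writing $F = B \cdot S_\eta$ for the canonical factorization of $F$ itself, one may take $F_n = B_n \cdot T_n$ with $B_n$ a finite truncation of the Blaschke factor of $F$ and $T_n$ a finite Blaschke approximation of $S_\eta$ whose zeros drift radially toward $\supp \eta$ at a rate tuned to enforce the identity. Either route, combined with the equivalence of equality cases in Theorem \ref{approximation-thm}, delivers (\ref{eq:goodness-condition}).
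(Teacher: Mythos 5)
Your framework is the right one and matches the paper's: Frostman-shift $F$ so that $T_\xi\circ F$ is a Blaschke product, truncate, diagonalize as $\xi\to 0$, and reduce stability to the equality case of (\ref{eq:fa3}) via the dichotomy in Theorem \ref{approximation-thm}. But the proof has a genuine gap exactly where you flag ``the hardest part'': you never establish the entropy identity, and the two routes you gesture at (an unspecified ``Schur-type extraction'' and a separate approximation of the Blaschke and singular factors of $F$ with radially drifting zeros ``tuned'' to the right logarithmic density) are not arguments. The second route is particularly problematic, since the critical points of a product $B_n\cdot T_n$ are not controlled in any simple way by the zeros of the factors, so there is no mechanism for the claimed tuning.

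The point you are missing is that no tuning is needed: only the upper bound
$\limsup_n\int_{\mathbb{S}^1}\log|F_n'|\,dm\le\int_{\mathbb{S}^1}\log|F'|\,dm$
has to be proved, because the reverse inequality is exactly (\ref{eq:fa3}) and holds automatically for any locally uniform approximation by finite Blaschke products (Theorem \ref{approximation-thm2}). The upper bound comes for free from the choice of approximants: if $F_{n,\xi}=T_\xi^{-1}\circ(\text{partial product of }T_\xi\circ F)$, then the pointwise boundary inequality
$|(T_\xi\circ F_{n,\xi})'(x)|\le|(T_\xi\circ F)'(x)|$ for $x\in\mathbb{S}^1$
(partial products of a Blaschke product have smaller boundary derivative; \cite[Corollary 4.13]{mashreghi}) yields
$\int_{\mathbb{S}^1}\log|F_{n,\xi}'|\,dm\le 2\log\frac{1+|\xi|}{1-|\xi|}+\int_{\mathbb{S}^1}\log|F'|\,dm$,
and letting $\xi\to 0$ along the diagonal kills the error term. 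Combined with the automatic lower bound this forces equality in (\ref{eq:fa3}), hence stability. So your proposal is repairable, but as written it identifies the crucial estimate without supplying it, and the exact-density picture you describe is the wrong way to think about why the estimate holds.
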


\begin{proof}
Suppose $\xi \in \mathbb{D}$ is such that $T_\xi \circ F$ is a Blaschke product, where $T_\xi = \frac{z - \xi}{1 - \overline{\xi}z}$.
We may choose a sequence $F_{n,\xi}$ of finite Blaschke products converging to $F$ so
that $T_\xi \circ F_{n, \xi}$ is a sequence of partial products of $T_\xi \circ F$. Then, for any $x \in \mathbb{S}^1$, we have
$$
|(T_\xi \circ F_{n,\xi})'(x)| \le |(T_\xi \circ F)'(x)|,
$$
see \cite[Corollary 4.13]{mashreghi}. It follows that
$$
|(F_{n,\xi})'(x)|  \le \biggl [ \frac{1 + |\xi|}{1-|\xi|} \biggr ]^2 |F'(x)|,
$$
which leads to the estimate
$$
\int_{\mathbb{S}^1} \log |F_{n,\xi}'(x)| dm  \le 2 \log \frac{1+|\xi|}{1-|\xi|} + \int_{\mathbb{S}^1} \log |F'(x)| dm.
$$
Since we can choose $\xi$ arbitrarily close to 0 (e.g.~see \cite[Theorem 2.5]{mashreghi}), we can diagonalize to find a sequence $F_n$ converging to $F$ for which
$$
\limsup_{n \to \infty} \int_{\mathbb{S}^1} \log |F_{n}'(x)| dm  \le  \int_{\mathbb{S}^1} \log |F'(x)| dm.
$$
However, by Theorem \ref{approximation-thm2}, the lower bound is automatic and the sequence $\{F_n\}$ is stable.
\end{proof}

\begin{remark}
Since translation $f \to T_\xi \circ f$ is continuous in $L^1(\mathbb{S}^1)$, the above proof shows that $\log|F_n'| \to \log|F'|$ converges in the $L^1$-norm.
\end{remark}

With the help of a Nevanlinna stable approximation $F_n \to F$, we can deduce
(\ref{eq:fundamental-lemma}) by taking $n \to \infty$ in
$
\lambda_{F_n} \ge |\inn F_n'| \lambda_{\mathbb{D}}.
$
Since minimality was proved in Section \ref{sec:inject}, the proof of the fundamental lemma (Lemma \ref{fundamental-lemma}) is complete.

 We can endow the space of analytic functions $\mathscr E = \{ f : f' \in \mathcal N\}$ with the {\em strong stable topology} by specifying that $f_n \to f$ if the $f_n$ converge uniformly on compact sets to $f$ and $\log|f_n'| \to \log|f'|$ in the $L^1(\mathbb{S}^1)$-norm.
By the above remark, finite Blaschke products are dense in $\mathscr J$,
while Theorem \ref{approximation-thm} implies that the subset $\mathscr J \subset \mathscr E$ is closed (see the remark after the theorem).
Another possible topology on $\mathscr E$ is the {\em weak stable topology} where one only requires the weak-$*$ convergence of measures $\log|f_n'|dm \to \log|f'|dm$. The above properties also hold in this topology.

\subsection{Example of an unstable approximation}

We now give an example of a sequence of finite Blaschke products which is not Nevanlinna stable.
Let $F_n$ be the Blaschke product of degree $n+1$ with zeros at the origin and at $z_j = e^{j  (2 \pi  i/n)} \cdot (1-1/n^2)$, $j=1,2,\dots, n$. With the normalization $F_n'(0) > 0$, the maps $F_n$ converge to the identity since $\sum_{j=1}^n (1-|z_j|) \to 0$ as $n \to \infty$. Recall that for $x \in \mathbb{S}^1$, one has the formula $|F_n'(x)| = 1 + \sum_{j=1}^n P_{z_j}(x)$, where $P_z$ is the Poisson kernel as viewed from $z \in \mathbb{D}$, e.g.~see \cite[Theorem 4.15]{mashreghi}. Computations show $$\int_{I_j} \log|F_n'| dm \, \ge \, \int_{I_j} \log|1 + P_{z_j}| dm \, \gtrsim \, 1/n$$ where $I_j$ consists of the points on the unit circle for which the closest zero is $z_j$. Hence, $|\out F_n'(0)| = \exp \int_{\mathbb{S}^1} \log |F'_n|dm > c > 1$ for some constant $c$ independent of $n \ge 1$.
Since the outer parts $\out F_n'$ do not converge to the constant function 1, neither can the inner parts $\inn F_n'$.

 A modification of this construction can be used to show the existence of a sequence of finite Blaschke products $F_n \to z$ (and thus $F_n' \to 1$) for which $\inn F_n' \to S_{\delta_1}$ and $\out F_n' \to 1/S_{\delta_1}$.

\section{Understanding the image}
\label{sec:understanding-image}

In this section, we discuss the image of the map $F \to \inn F'$ and prove the decomposition, product and division rules from the introduction. We also show that the map $F \to \inn F'$ is not surjective by exhibiting a large class of invisible measures. A complete description of the image will be given in the next section.

\subsection[Wedging F\_mu with F\_C]{Wedging $F_\mu$ with $F_C$}

\begin{theorem}
\label{wedge-fufc}
{\em (i)} Suppose $F_\mu \in \mathscr J$ is an inner function with $\inn F_\mu' = S_\mu$. Let $F_{\mu, C} = F_\mu \wedge F_C$ where $C$ is a Blaschke sequence. Then, $ \inn F_{\mu, C}' = B_C S_\mu$.

{\em (ii)} Conversely, if $F_{\mu,C} \in \mathscr J$ is an inner function with $\inn F_{\mu,C}' = B_C S_\mu$, then there exists an inner function $F_\mu$ with $ \inn F_{\mu}' = S_\mu$.
\end{theorem}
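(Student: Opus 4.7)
For part (i), I would set $F_{\mu, C} := F_\mu \wedge F_C$ using the wedge construction of Section \ref{sec:wedge}. The relevant Perron family is non-empty since by the fundamental lemma the SK-metric $|B_C S_\mu|\lambda_\mathbb{D}$ is dominated by both $\lambda_{F_\mu}$ and $\lambda_{F_C}$. The resulting metric $\lambda_{F_{\mu,C}}$ satisfies $\lambda_{F_{\mu,C}} \ge |B_C S_\mu|\lambda_\mathbb{D}$ and vanishes precisely on $C$ (contained in the zero set of $\lambda_{F_C}$, and not vanishing off $C$ by the lower bound). Lemma \ref{ac-theory1} then gives $\sigma(F_{\mu,C}') \le \mu$, while Lemma \ref{ac-theory2} applied to $\lambda_{F_{\mu,C}} \le \lambda_{F_\mu}$ gives the reverse bound $\sigma(F_{\mu,C}') \ge \sigma(F_\mu') = \mu$. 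Hence $\inn F_{\mu, C}' = B_C S_\mu$.

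For part (ii), my plan is to construct $F_\mu$ via a Perron construction based on the auxiliary smooth positive function $K := \lambda_{F_{\mu,C}}/|B_C|$, which is well-defined on all of $\mathbb{D}$ because the zeros of $F_{\mu,C}'$ and $B_C$ cancel with matching multiplicities. Let $\Phi'$ denote the family of SK-metrics $\lambda$ on $\mathbb{D}$ with $\lambda \le K$. It is non-empty, containing $|S_\mu|\lambda_\mathbb{D}$ by the fundamental lemma applied to $F_{\mu,C}$, and is obviously closed under pointwise maxima. The delicate axiom is closure under disk modifications $M_D$, which I would establish by a maximum principle: a direct computation gives that $K$ has curvature $-4|B_C|^2 \ge -4$ while any SK-metric has curvature $\le -4$, so at an interior maximum of $\log(\lambda/K)$ the bound $\Delta(\log\lambda - \log K) \le 0$ forces $\lambda \le K|B_C| \le K$, ruling out the possibility $\lambda > K$ inside $D$.

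Granting that $\Phi'$ is a Perron family, Heins's theorem produces $\tilde\lambda := \sup \Phi'$ as a regular conformal metric of constant curvature $-4$. Since $\tilde\lambda \ge |S_\mu|\lambda_\mathbb{D}$ is strictly positive, Liouville's theorem furnishes a holomorphic $F : \mathbb{D} \to \mathbb{D}$ without critical points such that $\tilde\lambda = \lambda_F$; Lemma \ref{technical-lemma} (applied with $H = S_\mu$) places $F$ in $\mathscr J$, and Lemma \ref{ac-theory1} applied to $\lambda_F \ge |S_\mu|\lambda_\mathbb{D}$ gives $\sigma(F') \le \mu$.

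The remaining --- and in my view the main --- obstacle is to upgrade this to equality $\sigma(F') = \mu$, for which I would exploit the upper bound $\lambda_F \cdot |B_C| \le \lambda_{F_{\mu,C}}$ encoded in $\Phi'$. Taking logarithms, subtracting $\log\lambda_\mathbb{D}$, integrating on $\{|z|=r\}$, letting $r \to 1$, and using $\lim_{r \to 1}\int_{|z|=r}\log|B_C|\,dm = 0$ for Blaschke products, one obtains
\begin{equation*}
\lim_{r \to 1}\int_{|z|=r}\log(\lambda_F/\lambda_\mathbb{D})\,dm \, \le \, \lim_{r\to 1}\int_{|z|=r}\log(\lambda_{F_{\mu,C}}/\lambda_\mathbb{D})\,dm.
\end{equation*}
Applying Lemma \ref{ac-theory2} with $G$ the identity map, these two limits equal $-\sigma(F')(\mathbb{S}^1)$ and $-\mu(\mathbb{S}^1)$ respectively, so $\sigma(F')(\mathbb{S}^1) \ge \mu(\mathbb{S}^1)$; combined with the pointwise bound $\sigma(F') \le \mu$, this forces $\sigma(F') = \mu$, and hence $\inn F' = S_\mu$ as required.
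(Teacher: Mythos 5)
Your part (i) is essentially the paper's argument: the paper sandwiches $\lambda_{F_{\mu,C}}$ between $|B_C|\lambda_{F_\mu}$ and $\lambda_{F_\mu}$ (resp.\ $\lambda_{F_C}$), while you use the lower bound $|B_C S_\mu|\lambda_{\mathbb{D}}$ together with $\lambda_{F_{\mu,C}}\le\lambda_{F_\mu}$; both pin down the critical set and the singular measure in the same way via Lemmas \ref{ac-theory1} and \ref{ac-theory2}.

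Part (ii) is correct but genuinely different from the paper. The paper proceeds by approximation: it takes finite Blaschke products $F_n\to F_{\mu,C}$, deletes the critical points of $F_n$ lying in $\{|z|<r\}$, passes to the maximal Blaschke products on the remaining critical sets, and extracts a diagonal limit $F$ satisfying $|B_C|\lambda_F\le\lambda_{F_{\mu,C}}\le\lambda_F$; Hurwitz then kills the critical points. You instead build $\lambda_{F_\mu}$ in one shot as the supremum of the Perron family of SK-metrics dominated by the auxiliary majorant $K=\lambda_{F_{\mu,C}}/|B_C|$, whose curvature $-4|B_C|^2\ge -4$ makes the comparison principle go through. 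This is very much in the spirit of the wedge and hull constructions of Sections \ref{sec:wedge}--\ref{sec:hull}, and it avoids subsequence extraction entirely; the price is that you must verify the modification axiom against a majorant of non-constant curvature, which is a (correct but) new verification not used elsewhere in the paper. Both routes then terminate in the same gap computation: $\lambda_F\ge|S_\mu|\lambda_{\mathbb{D}}$ gives $\sigma(F')\le\mu$, and $|B_C|\lambda_F\le\lambda_{F_{\mu,C}}$ plus $\lim_{r\to1}\int_{|z|=r}\log|B_C|\,dm=0$ gives the reverse inequality on total masses. One presentational caveat: the pointwise statement ``$\Delta(\log\lambda-\log K)\le 0$ at an interior maximum'' presumes $C^2$ smoothness that an SK-metric need not have; the clean version is the one used throughout the paper, namely that $\log^+(M_D\lambda/K)$ is subharmonic on $D$ because $\Delta\log(M_D\lambda/K)=4(M_D\lambda)^2-4|B_C|^2K^2>0$ on the set where $M_D\lambda>K$, and vanishes on $\partial D$. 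With that rewording your proof is complete.
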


\begin{proof}
(i) Since $\lambda_{F_C} \ge \lambda_{F_{\mu, C}} \ge |B_C|\lambda_{F_\mu}$, the critical set of $F_{\mu, C}$ is precisely $C$ with the correct multiplicity; while the inequalities
$
\lambda_{F_\mu}  \ge  \lambda_{F_{\mu,C}}  \ge  |B_C| \lambda_{F_\mu}
$
show that $\sigma(F'_{\mu, C})  = \mu$. Hence  $ \inn F_{\mu, C}' = B_C S_\mu$ as desired.

(ii) Suppose $F_{\mu, C} \in \mathscr J$ is an inner function with $\inn F'_{\mu, C} = B_C S_\mu$. 
Let $F_{n}$ be some approximation of $F_{\mu, C}$ by finite Blaschke products (stability is not required in this proof).
 For any $0 < r < 1$, we can  form the sequence of finite Blaschke products
 $F_{n,r}$ by removing the critical points from $F_n$ that lie in the ball $\{ z: |z|<r\}$, and considering the maximal Blaschke product with the remaining critical points (with the normalization $F_{n,r}(0) = 0$ and $F_{n,r}'(0) > 0$).
For each $r$, we pick a subsequential limit $F_r$ of $F_{n,r}$. We may then extract a further subsequential limit $F$ by taking $r \to 1$.
By construction, we have
$$
|B_C| \lambda_{F} \le \lambda_{F_{\mu,C}} \le \lambda_F.
$$
Since the limit $F$ cannot be constant, by Hurwitz' theorem, $F$ has no critical points.
 The above inequalities imply $\sigma(F') = S_\mu$ and therefore $\inn F' = S_\mu$.
\end{proof}

\subsection{Subseqences of stable sequences}

In the next lemma, we show that any subsequence of a stable sequence is also stable:

\begin{lemma}
Suppose that $F_{C_n} \to F_{\mu_1+\mu_2}$ is a stable sequence. Suppose that $C_{1,n} \subset C_n$ is such that $B_{C_{1,n}}$ converges to $S_{\mu_1}$. Then, $F_{C_{1,n}} \to F_{\mu_1}$.
\end{lemma}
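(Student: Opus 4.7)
The proof proceeds by extracting subsequential limits and pinning them down through two complementary bounds on the singular part of the limit. Setting $C_{2,n} = C_n \setminus C_{1,n}$ and passing to a subsequence, by normal families we may assume $F_{C_{1,n}} \to F$ and $F_{C_{2,n}} \to G$ uniformly on compact subsets of $\mathbb{D}$. The multiplicative relation $B_{C_n} = B_{C_{1,n}} \cdot B_{C_{2,n}}$ combined with the hypotheses $B_{C_n} \to S_{\mu_1+\mu_2}$ and $B_{C_{1,n}} \to S_{\mu_1}$ forces $B_{C_{2,n}} \to S_{\mu_2}$. Theorem \ref{approximation-thm2} then gives $F \in \mathscr J$ and $|\inn F'| \ge |S_{\mu_1}|$; since the critical points of $F_{C_{1,n}}$ escape to the boundary and $F$ is nonconstant (the fundamental lemma bound $\lambda_F \ge |S_{\mu_1}|\lambda_{\mathbb{D}}$ forces $|F'(0)| \ge e^{-\mu_1(\mathbb{S}^1)} > 0$), Hurwitz' theorem rules out critical points of $F$ in the disk, so $\inn F' = S_{\sigma_F}$ is purely singular with $\sigma_F \le \mu_1$.

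The matching lower bound $\sigma_F \ge \mu_1$ comes from exploiting the maximality of $F_{C_n}$. Since $|B_{C_{2,n}}| \lambda_{F_{C_{1,n}}}$ is an SK-metric on $\mathbb{D}$ vanishing with the correct multiplicity on $C_n = C_{1,n} \cup C_{2,n}$, maximality of $F_{C_n}$ gives
\begin{equation*}
\lambda_{F_{C_n}} \;\ge\; |B_{C_{2,n}}| \, \lambda_{F_{C_{1,n}}}.
\end{equation*}
Passing to the limit and invoking the fundamental lemma $\lambda_F \ge |\inn F'| \lambda_{\mathbb{D}}$,
\begin{equation*}
\lambda_{F_{\mu_1+\mu_2}} \;\ge\; |S_{\mu_2}| \, \lambda_F \;\ge\; |S_{\mu_2} S_{\sigma_F}| \, \lambda_{\mathbb{D}} \;=\; |S_{\sigma_F + \mu_2}| \, \lambda_{\mathbb{D}}.
\end{equation*}
Lemma \ref{ac-theory1} applied to $F_{\mu_1+\mu_2}$ yields $\mu_1 + \mu_2 = \sigma(F_{\mu_1+\mu_2}') \le \sigma_F + \mu_2$, i.e., $\mu_1 \le \sigma_F$, and hence $\sigma_F = \mu_1$.

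With $\inn F' = S_{\mu_1}$ and the normalization $F(0) = 0$, $F'(0) > 0$ inherited in the limit from the $F_{C_{1,n}}$, the injectivity of $F \mapsto \inn F'$ proved in Section \ref{sec:inject} pins down $F = F_{\mu_1}$. Since every subsequential limit of $F_{C_{1,n}}$ equals $F_{\mu_1}$, the full sequence converges uniformly on compact subsets of the disk. The main obstacle is finding the right SK-metric inequality to produce the lower bound on $\sigma_F$: the choice $|B_{C_{2,n}}|\lambda_{F_{C_{1,n}}}$ is dictated by the maximality of $F_{C_n}$ and meshes perfectly with the fundamental lemma and Lemma \ref{ac-theory1} to match the upper bound obtained from Theorem \ref{approximation-thm2}.
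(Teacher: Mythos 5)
Your proof is correct and follows essentially the same route as the paper: pass to subsequential limits, get $\sigma_F\le\mu_1$ from the approximation theorem, and get the reverse inequality from the maximality bound $\lambda_{F_{C_n}}\ge|B_{C_{2,n}}|\lambda_{F_{C_{1,n}}}$ combined with the gap computation (the paper runs this with the roles of $C_{1,n}$ and $C_{2,n}$ interchanged and concludes ``similarly'' for the other factor). Your added endgame — injectivity plus normalization to identify the limit as $F_{\mu_1}$, then the sub-subsequence argument for full convergence — makes explicit what the paper leaves implicit.
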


\begin{proof}
Write $C_n = C_{1,n} \cup C_{2,n}$. From the assumptions, $B_{C_{1,n}} \to S_{\mu_1}$ and $B_{C_{2,n}} \to S_{\mu_2}$. After passing to a subsequence, we can ensure convergence: $$F_{C_{1,n}} \to F_{\nu_1}, \qquad \nu_1 \le \mu_1,$$
$$F_{C_{2,n}} \to F_{\nu_2}, \qquad \nu_2 \le \mu_2.$$
The monotonicity of limits follows from Theorem \ref{approximation-thm}.

For each $n$, we have $\lambda_{F_{C_n}} \ge |B_{1,n}| \lambda_{F_{C_{2,n}}}$ and therefore, after taking $n \to \infty$, we see that  $$\lambda_{F_{\mu_1+\mu_2}} \ge |S_{\mu_1}| \lambda_{F_{\nu_2}}.$$ As is now standard, we may deduce
$$
\mu_1+\mu_2 \le \mu_1 + \nu_2
$$
by examining the equation
$$
0 \le \lim_{r \to 1} \int_{rI} \log \frac{\lambda_{F_{\mu_1+\mu_2}}}{|S_{\mu_1}|\lambda_{ F_{\nu_2}}} \, dm.
$$
Hence $\nu_2 = \mu_2$ (and similarly $\nu_1 = \mu_1$) as desired.
\end{proof}

The above lemma has a number of consequences:

\begin{corollary}
\label{divisors}
If a measure $\mu$ is constructible, i.e.~if $F_\mu$ exists, then all $\nu \le \mu$ are also constructible. Combining with  Theorem \ref{wedge-fufc}, we see that the image of the mapping $F \to \inn F'$ is closed under taking divisors.
\end{corollary}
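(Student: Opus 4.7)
The plan is to reduce the claim to the preceding subsequence lemma by approximating $F_\mu$ with a stable sequence of finite Blaschke products and then splitting its critical sets according to the decomposition $\mu = \nu + (\mu - \nu)$. First I would invoke Craizer's lemma (Lemma \ref{craizer-lemma}) to obtain a stable approximation $F_{C_n} \to F_\mu$ by finite Blaschke products; stability ensures $B_{C_n} = \inn F'_{C_n} \to S_\mu$ locally uniformly on $\mathbb{D}$. The goal then becomes partitioning each finite set $C_n = C_{1,n} \sqcup C_{2,n}$ so that $B_{C_{1,n}} \to S_\nu$ and $B_{C_{2,n}} \to S_{\mu - \nu}$. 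Once such a partition is in hand, we pass to a subsequence along which $F_{C_{1,n}}$ converges; the preceding lemma, applied to the stable sequence $F_{C_n} \to F_{\nu + (\mu - \nu)}$, forces the limit to be an inner function $F_\nu$ with $\inn F'_\nu = S_\nu$, giving the constructibility of $\nu$.

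The crux is the arc-by-arc construction of the partition. Via the classical Frostman correspondence, the convergence $B_{C_n} \to S_\mu$ translates into weak-$*$ convergence on $\mathbb{S}^1$ of the boundary point masses $\sum_{c \in C_n}(1-|c|^2)\,\delta_{c/|c|}$ to a measure proportional to $\mu$. Given $\nu \le \mu$, I would partition $\mathbb{S}^1$ into a fine collection of arcs $\{I_k\}$ and, for each $I_k$, greedily select a sub-collection $C_{1,n}^{(k)} \subset C_n$ of zeros lying near $I_k$ whose total boundary weight approximates $\nu(I_k)$ from below. Setting $C_{1,n} = \bigsqcup_k C_{1,n}^{(k)}$ and diagonalizing over refinements of the arc partition and over $n$ yields $B_{C_{1,n}} \to S_\nu$, and therefore $B_{C_{2,n}} = B_{C_n}/B_{C_{1,n}} \to S_{\mu - \nu}$.

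The main obstacle is the delicate bookkeeping in the greedy selection: one must handle arc endpoints so that no mass is lost in the limit, and must verify that the procedure produces Blaschke products converging exactly to $S_\nu$ rather than to some proper divisor of $S_\nu$. Once this technical step is in place, the preceding lemma transports the combinatorial statement about point masses into the desired analytic statement about inner functions. The closing assertion of the corollary---that the image of $F \to \inn F'$ is closed under taking divisors---then follows immediately by combining constructibility of $\nu$ with Theorem \ref{wedge-fufc}, which accounts for the Blaschke-product factor of an arbitrary divisor.
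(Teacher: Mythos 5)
Your proposal is correct and follows the paper's own route exactly: take a Craizer stable approximation $F_{C_n} \to F_\mu$ by finite Blaschke products, select $C_{1,n} \subset C_n$ with $B_{C_{1,n}} \to S_\nu$, and invoke the preceding lemma on subsequences of stable sequences to get $F_{C_{1,n}} \to F_\nu$. The only difference is that the paper dismisses the selection step with ``it is not difficult,'' whereas you sketch it via weak-$*$ convergence of the boundary point masses; that elaboration is sound and consistent with what the paper intends.
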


Indeed, given a stable approximation $F_{C_n}$ to $F_\mu$, it is not difficult to select $C_{1,n} \subset C_n$ so that
$B_{C_{1,n}} \to S_\nu$.

\begin{corollary}
\label{product-law}
If $F_{\mu_1}$ and $F_{\mu_2}$ are constructible, then $F_{\mu_1 + \mu_2}$ is also constructible.
\end{corollary}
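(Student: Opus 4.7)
The plan is to combine Nevanlinna-stable approximations of $F_{\mu_1}$ and $F_{\mu_2}$ by taking the union of their critical sets, and then identify the limit as $F_{\mu_1+\mu_2}$. By Lemma \ref{craizer-lemma}, fix stable approximations $F_{C_n^{(i)}} \to F_{\mu_i}$ by finite Blaschke products for $i = 1, 2$, so that $B_{C_n^{(i)}} \to S_{\mu_i}$. Set $C_n = C_n^{(1)} \sqcup C_n^{(2)}$ and consider the maximal Blaschke product $F_{C_n}$; by Corollary \ref{kr-corollary} its inner-derivative factor is $B_{C_n} = B_{C_n^{(1)}}\cdot B_{C_n^{(2)}} \to S_{\mu_1}\cdot S_{\mu_2} = S_{\mu_1+\mu_2}$. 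Passing to a subsequence via normal families, $F_{C_n} \to G$ uniformly on compacta. Theorem \ref{approximation-thm}, applied in the form valid for general sequences in $\mathscr J$ (thanks to the fundamental lemma), yields $G \in \mathscr J$ with $\inn G' = S_\nu$ for some $\nu \le \mu_1+\mu_2$; the Blaschke factor of $\inn G'$ must be trivial, since $\lim B_{C_n} = S_{\mu_1+\mu_2}$ has no zeros in $\mathbb D$.

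A lower bound on $\nu$ follows from the monotonicity of maximal Blaschke metrics: since $C_n^{(i)} \subset C_n$, we have $\lambda_{F_{C_n}} \le \lambda_{F_{C_n^{(i)}}}$, which passes to $\lambda_G \le \lambda_{F_{\mu_i}}$ in the limit (using the stability of $F_{C_n^{(i)}} \to F_{\mu_i}$ to ensure $\lambda_{F_{C_n^{(i)}}} \to \lambda_{F_{\mu_i}}$). Applying Lemma \ref{ac-theory2} gives $\nu \ge \mu_i$ for each $i$, whence $\nu \ge \mu_1\vee\mu_2$.

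The crux---and the main obstacle---is to upgrade this to $\nu = \mu_1+\mu_2$. By Theorem \ref{approximation-thm}'s trichotomy, equality is equivalent to the Nevanlinna stability of $F_{C_n} \to G$: in the stable case the inner factors converge, forcing $\inn G' = \lim \inn F_{C_n}' = S_{\mu_1+\mu_2}$, hence $G$ serves as $F_{\mu_1+\mu_2}$ and constructibility follows. My strategy to establish stability is to verify the mass balance underlying inequality (\ref{eq:fa3}). Using the identity $\int_{\mathbb S^1}\log|f'|dm = \log|f'(0)| - \log|\inn f'(0)|$ (which applies once the zero of $F_{C_n}$ at the origin is disentangled from $C_n$, e.g.\ by replacing $F_{C_n}$ with a Frostman shift) together with the convergence $|B_{C_n}(0)| \to e^{-(\mu_1+\mu_2)(\mathbb S^1)}$, stability reduces to the total-mass identity $\nu(\mathbb S^1) = (\mu_1+\mu_2)(\mathbb S^1)$; combined with $\nu \le \mu_1+\mu_2$, this would force equality. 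I expect this mass identity can be extracted by comparing $F_{C_n}$ with the auxiliary wedges $F_{\mu_2} \wedge F_{C_n^{(1)}}$, whose inner-derivative factors $B_{C_n^{(1)}} S_{\mu_2} \to S_{\mu_1+\mu_2}$ are given directly by Theorem \ref{wedge-fufc}(i), and invoking the maximality of $F_{C_n}$ among Blaschke products with critical set containing~$C_n$ together with the stability of the two given component sequences.
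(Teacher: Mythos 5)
Your setup coincides with the paper's: approximate $F_{\mu_1}$ and $F_{\mu_2}$ by finite Blaschke products, form the maximal Blaschke products $F_{C_n}$ on the union $C_n = C_n^{(1)}\cup C_n^{(2)}$ of the critical sets, and pass to a limit $G$ with $\inn G' = S_\nu$. But the decisive step is missing. The inclusion $C_n^{(i)}\subset C_n$ only gives $\lambda_{F_{C_n}} \le \lambda_{F_{C_n^{(i)}}}$ separately for $i=1,2$, hence, as you correctly note, only $\nu \ge \mu_1 \vee \mu_2$ -- which is strictly weaker than $\nu \ge \mu_1+\mu_2$ whenever $\mu_1$ and $\mu_2$ are not mutually singular (e.g.\ $\mu_1=\mu_2$). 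Everything after ``the crux'' is a hope rather than a proof: the proposed comparison of $F_{C_n}$ with the wedges $F_{\mu_2}\wedge F_{C_n^{(1)}}$ does not obviously yield the total-mass identity, since the wedge has critical set $C_n^{(1)}$ only (not containing $C_n$), so the maximality of $F_{C_n}$ gives no inequality between the two metrics in the direction you need. As it stands, the additive lower bound is not established.

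The paper closes exactly this gap with a tool absent from your argument: the Solynin-type submultiplicativity estimate
$\lambda_{F_{C_1}}\lambda_{F_{C_2}} \ge \lambda_{F_{C_1\cup C_2}}\lambda_{F_{C_1\cap C_2}}$
for finite critical sets (proved by a subharmonicity argument as in \cite[Lemma 2.8]{KR-solynin}). With $C_n^{(1)}\cap C_n^{(2)}=\emptyset$ this gives
$\log(\lambda_{\mathbb D}/\lambda_{F_{C_n^{(1)}}}) + \log(\lambda_{\mathbb D}/\lambda_{F_{C_n^{(2)}}}) \le \log(\lambda_{\mathbb D}/\lambda_{F_{C_n}})$,
and averaging over $rI$ and letting $r\to 1$ (Lemmas \ref{nevanlinna-gap} and \ref{derivative-gap}) produces the \emph{additive} bound $\nu \ge \mu_1+\mu_2$ for the limit measure. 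Note also that the paper then avoids your ``stability/equality'' difficulty entirely: it does not show $\nu = \mu_1+\mu_2$, but simply concludes $\nu \ge \mu_1+\mu_2$ and invokes the division rule (Corollary \ref{divisors}) to deduce that the divisor $\mu_1+\mu_2$ of the constructible measure $\nu$ is itself constructible. If you want to salvage your approach, you should prove (or import) the Solynin estimate; attempting to force exact stability of $F_{C_n}\to G$ is both harder and unnecessary.
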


The proof relies on the Solynin-type estimate
\begin{equation}
\label{eq:sol}
\lambda_{F_{C_1}}\lambda_{F_{C_2}} \ge \lambda_{F_{C_1 \cup C_2}} \lambda_{F_{C_1 \cap C_2}},
\end{equation}
valid when $C_1$ and $C_2$ are finite subsets of the disk counted with multiplicity. The proof of (\ref{eq:sol}) is essentially that of \cite[Lemma 2.8]{KR-solynin}, so we only sketch the details. Consider the function
$$
u(z) = \log^+ \biggl ( \frac{\lambda_{F_{C_1 \cup C_2}} \lambda_{F_{C_1 \cap C_2}}}{\lambda_{F_{C_1}}\lambda_{F_{C_2}}} \biggr ), \qquad z \in \mathbb{D}.
$$
We claim that it is subharmonic and non-negative in $\mathbb{D}$ yet tends to 0 as $|z| \to 1$. This will show that it is equal to 0 identically. It is clearly non-negative by definition. To show that $u(z)$ is subharmonic, one can check that $\Delta u \ge 0$. We refer the reader to \cite[Lemma 2.8]{KR-solynin} for the computation. For the last statement, note that by Lemma \ref{fundamental-lemma}, for a finite Blaschke product, the quotient $\lambda_{F}/\lambda_{\mathbb{D}} \to 1$ uniformly as $|z| \to 1$.

\begin{proof}[Proof of Corollary \ref{product-law}]
Choose approximations $F_{C_{1,n}} \to F_{\mu_1}$ and $F_{C_{2, n}} \to F_{\mu_2}$ by finite Blaschke products. Making a small perturbation if necessary, we can assume that the sets $C_{1,n}$ and $C_{2,n}$ are disjoint. Let $C_n = C_{1,n} \cup C_{2,n}$ be their union. Passing to a subsequence, we may assume that $F_{C_{n}} \to F_\mu$ for some measure $\mu$ on the unit circle. By Solynin's estimate, we have
\begin{equation}
\label{eq:product-law1}
\log \frac{\lambda_{\mathbb{D}}}{\lambda_{F_{C_{1,n}}}} + \log \frac{\lambda_{\mathbb{D}}}{\lambda_{F_{C_{2, n}}}} \le 
\log \frac{\lambda_{\mathbb{D}}}{\lambda_{F_{C_n}}}.
\end{equation}
Taking $n \to \infty$ gives
\begin{equation}
\label{eq:product-law2}
\log \frac{\lambda_{\mathbb{D}}}{\lambda_{F_{\mu_1}}} + \log \frac{\lambda_{\mathbb{D}}}{\lambda_{F_{\mu_2}}} \le 
\log \frac{\lambda_{\mathbb{D}}}{\lambda_{F_{\mu}}}.
\end{equation}
By examining averages over $rI$ and taking $r \to 1$, we discover that $\mu \ge \mu_1+\mu_2$. Applying Corollary \ref{divisors} shows that the measure $\mu_1+\mu_2$ is constructible.
\end{proof}

\begin{corollary}
If $S'_\mu \in \mathcal N$ then $\mu$ is constructible.
\end{corollary}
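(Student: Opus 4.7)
The plan is to apply the preceding machinery to $F = S_\mu$ itself. Since $S'_\mu \in \mathcal N$, we have $S_\mu \in \mathscr J$, and Lemma \ref{ac-lemma} yields $\inn(S'_\mu) = B_D S_\nu$ for some Blaschke sequence $D$ (the critical set of $S_\mu$) and some positive singular measure $\nu$. The witness $F = S_\mu$ places $B_D S_\nu$ in the image of $F \mapsto \inn F'$, so by the decomposition rule (Theorem \ref{wedge-fufc}(ii)) the measure $\nu$ is constructible; once we also know $\mu \le \nu$, the division rule (Corollary \ref{divisors}) delivers constructibility of $\mu$.

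The crux of the argument is therefore to prove $\mu \le \nu$, i.e., that $S_\mu$ is an inner divisor of $\inn(S'_\mu)$. Equivalently, the quotient
$$g(z) \,:=\, \frac{S'_\mu(z)}{S_\mu(z)} \,=\, (\log S_\mu)'(z),$$
which is holomorphic in $\mathbb{D}$ since $S_\mu$ has no zeros there, should belong to the Smirnov class $\mathcal N^+$. Granting this, the canonical factorization of $g$ has no singular denominator, so $\inn(g) = B_D S_\alpha$ for some $\alpha \ge 0$; the identity $S'_\mu = S_\mu \cdot g$ then yields $\inn(S'_\mu) = B_D \cdot S_{\mu + \alpha}$, and hence $\nu = \mu + \alpha \ge \mu$.

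The main obstacle is to verify $g \in \mathcal N^+$. The natural starting point is the pointwise bound
$$\log^+|g(re^{i\theta})| \,\le\, \log^+|S'_\mu(re^{i\theta})| + \bigl(-\log|S_\mu(re^{i\theta})|\bigr),$$
valid because $|S_\mu| \le 1$ on $\mathbb{D}$. The first summand converges in $L^1(\mathbb{S}^1)$ to $\log^+|S'_\mu(e^{i\theta})|$ because $S'_\mu \in \mathcal N^+$ by Lemma \ref{ac-lemma}. The second summand is harmonic in $\mathbb{D}$ with constant mean $\mu(\mathbb{S}^1)$ over each circle $|z|=r$, so this crude bound is by itself not tight enough; however, the two summands are never simultaneously large --- where $-\log|S_\mu|$ blows up (near $\supp\mu$), $|S'_\mu|$ is exponentially small, and vice versa --- and exploiting this cancellation should upgrade the estimate to the convergence $\int\log^+|g(re^{i\theta})|\,d\theta \to \int\log^+|g|\,dm$ as $r \to 1$, which is precisely the defining property of $\mathcal N^+$.
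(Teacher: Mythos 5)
Your reduction is exactly the one the paper intends: take $F=S_\mu$ as the witness (so $S_\mu\in\mathscr J$ and $\inn(S'_\mu)=B_DS_\nu$ lies in the image), strip the Blaschke factor via Theorem \ref{wedge-fufc}(ii), and finish with the division rule once $\mu\le\nu$ is known. The paper does not prove $\mu\le\nu$ either; it asserts in the introduction that ``$S_\mu$ divides $S'_\mu$'' and treats this as a known classical fact, which it is --- so you are entitled to quote it rather than rederive it.

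The genuine gap is in your last paragraph, where you try to rederive it. The crude bound $\log^+|g|\le\log^+|S'_\mu|+P_\mu$ yields, after integrating over $r\mathbb{S}^1$ and letting $r\to1$, only a defect of at most $\mu(\mathbb{S}^1)$; unwound, this says $\nu\ge\mu-\mu=0$, i.e.\ it recovers nothing beyond Lemma \ref{ac-lemma}. The entire content of the divisibility is the ``cancellation'' you gesture at, and the sentence ``exploiting this cancellation should upgrade the estimate'' is a hope, not an argument. Moreover the heuristic cannot be made to work from the pointwise picture alone: the cancellation region is where $P_\mu(z)\gg\log\frac{1}{1-|z|}$, and for a measure with $\omega_\mu(t)\le Ct\log(1/t)$ that region is \emph{empty}, so any correct proof must invoke the hypothesis $S'_\mu\in\mathcal N$ in an essential way (consistent with Corollary \ref{cullen-sharp}, such measures in fact never satisfy $S'_\mu\in\mathcal N$). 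So either cite the classical divisibility theorem (it can be found in the literature on derivatives of inner functions, e.g.\ Mashreghi's monograph cited as \cite{mashreghi}), as the paper does, or supply a genuine proof of $S'_\mu/S_\mu\in\mathcal N^+$; as written, the step $\mu\le\nu$ is unproved and it is the only nontrivial step in the argument.
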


In \cite{cullen}, M.~Cullen showed that this is the case when the support of $\mu$ is a Beurling-Carleson set, that is, a closed subset of the unit circle of zero Lebesgue measure whose complement is a union of arcs $\bigcup_k I_k$ with $\sum |I_k| \log \frac{1}{|I_k|} < \infty$.

\subsection{Invisible measures}

Let $\mu$ be a finite positive measure on the unit circle, which is singular with respect to the Lebesgue measure. We say $\mu$ is {\em invisible} if for any measure
 $0 <  \nu \le \mu$, there does not exist a function $F_\nu \in \mathscr J$ with $\inn F_\nu' = S_\nu$.

\begin{lemma}
Either the map $F \to \inn F'$ is surjective or there exists an invisible measure.
\end{lemma}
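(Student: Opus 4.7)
The plan is to assume the map $F \to \inn F'$ is not surjective and then extract from any counterexample a measure whose every positive sub-measure fails to lie in the image. By the decomposition rule, if $B_C S_\mu$ is not in the image then $S_\mu$ is already not in the image, so without loss of generality there exists a finite positive singular measure $\mu$ that is not constructible. From this $\mu$ I will peel off a maximal constructible sub-measure $\mu_{\vis}$ and show that $\mu_{\inv} := \mu - \mu_{\vis}$ is invisible.

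First I record that the family $\mathcal{C}_\mu := \{\nu : 0 \le \nu \le \mu,\ \nu \text{ constructible}\}$ is closed under the lattice operation $\max$. Given $\nu_1,\nu_2 \in \mathcal{C}_\mu$, write
$$\max(\nu_1,\nu_2) \;=\; \nu_1 + \bigl(\nu_2 - \min(\nu_1,\nu_2)\bigr).$$
The second summand is $\le \nu_2$, hence constructible by the division rule (Lemma \ref{divisors3}); the sum is bounded by $\mu$ and therefore finite, so the product rule (Lemma \ref{product-law2}) shows $\max(\nu_1,\nu_2) \in \mathcal{C}_\mu$.

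Next, set $t := \sup\{\nu(\mathbb{S}^1) : \nu \in \mathcal{C}_\mu\} \le \mu(\mathbb{S}^1)$ and pick a sequence $\nu_n \in \mathcal{C}_\mu$ with $\nu_n(\mathbb{S}^1) \to t$. Replacing $\nu_n$ by $\max(\nu_1,\ldots,\nu_n)$ using the previous paragraph, I may assume the sequence is monotonically increasing, so it has a pointwise (and setwise) limit $\mu_{\vis} \le \mu$ with $\mu_{\vis}(\mathbb{S}^1) = t$. The telescoping identity
$$\mu_{\vis} \;=\; \nu_1 + \sum_{n \ge 2}(\nu_n - \nu_{n-1})$$
exhibits $\mu_{\vis}$ as a countable sum of constructible measures (each difference $\nu_n - \nu_{n-1} \le \nu_n$ is constructible by the division rule) whose total mass is finite; hence $\mu_{\vis} \in \mathcal{C}_\mu$ by the product rule.

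Finally, set $\mu_{\inv} := \mu - \mu_{\vis}$. Since $\mu$ itself is not constructible but $\mu_{\vis}$ is, we must have $\mu_{\inv} > 0$. If some $0 < \nu \le \mu_{\inv}$ were constructible, then $\mu_{\vis} + \nu \le \mu$ would also be constructible by the product rule, while $(\mu_{\vis} + \nu)(\mathbb{S}^1) > t$, contradicting the definition of $t$. Hence $\mu_{\inv}$ is invisible, completing the proof. The only delicate point is ensuring that the supremum $\mu_{\vis}$ is actually attained; this is handled by the closure of $\mathcal{C}_\mu$ under max combined with the countable version of the product rule, and no genuinely new idea beyond the three rules already stated is needed.
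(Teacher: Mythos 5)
Your argument is correct as a derivation from the rules stated in the introduction, but it takes a genuinely different route from the paper, and it carries a logical-ordering issue you should be aware of. The paper's proof is a direct construction: it takes the hull (Section \ref{sec:hull}) of the non-vanishing SK-metric $|S_\mu|\,\lambda_{\mathbb{D}}$, identifies it via Liouville's theorem together with Lemmas \ref{technical-lemma} and \ref{ac-theory1} as $\lambda_{F_\nu}$ for some measure $\nu \le \mu$ (with $\nu < \mu$ strictly, since $\mu$ is not constructible), and shows $\mu-\nu$ is invisible because any constructible $\sigma \le \mu-\nu$ would, via the finite product rule, produce a metric strictly below $\lambda_{F_\nu}$ and above $|S_\mu|\,\lambda_{\mathbb{D}}$, contradicting minimality of the hull. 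You replace this with a measure-theoretic exhaustion; your lattice identity for $\max(\nu_1,\nu_2)$, the mass-maximizing sequence, and the final contradiction are all sound. The point to flag is that your telescoping step needs the \emph{countable} product rule (Lemma \ref{product-law2}) in an essential way -- the finite version (Corollary \ref{product-law}) suffices for closure under $\max$ but not for attaining the supremum -- and in the paper the countable product rule is proved \emph{after} this lemma, using Theorem \ref{inv-criterion}, which is itself extracted from the very argument you are replacing. As the paper is ordered, your proof is therefore circular; it becomes legitimate only if the countable product rule is first established independently, which can be done but requires exactly the hull machinery (limits of the decreasing metrics $\lambda_{F_{\tilde\mu_j}}$ plus Lemmas \ref{technical-lemma} and \ref{ac-theory1}) that the paper's short proof invokes directly. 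What your approach buys is a soft, order-theoretic derivation of the dichotomy from the three rules alone; what the paper's approach buys is the stronger Theorem \ref{inv-criterion} -- the explicit decomposition $\mu = \mu_{\vis}+\mu_{\inv}$ with $\lambda_{F_{\mu_{\vis}}}$ identified as the hull -- essentially for free.
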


\begin{proof}
Suppose $F_\mu$ is not constructible. Since the hull of the metric $|S_\mu| \cdot \lambda_{\mathbb{D}}$ defined in Section \ref{sec:hull} cannot vanish anywhere, it must be of the form $\lambda_{F_{\nu}}$ for some measure $\nu$. (Lemma \ref{technical-lemma} explains why $F_\nu$ must be an inner function.) Applying Lemma \ref{ac-theory1}, we see that $\nu < \mu$ since equality cannot hold.
From the product rule (Corollary \ref{product-law}), it follows that the measure $\mu - \nu$ is invisible. More precisely, if $\sigma \le \mu - \nu$ was constructible, then $\lambda_{F_v} > \lambda_{F_{\mu-\sigma/2}} > |S_\mu| \cdot \lambda_\mathbb{D}$ would contradict the definiton of $\nu$.
\end{proof}

Actually, the above argument shows a little more:

\begin{theorem} 
\label{inv-criterion}
A measure $\mu$ is invisible if and only if the hull of $|S_\mu| \cdot \lambda_{\mathbb{D}}$   is the Poincar\'e metric. More generally, any measure $\mu$ can be uniquely decomposed into a constructible part and an invisible part: $\mu = \mu_{\vis} + \mu_{\inv}$, in which case, the hull of $|S_\mu| \cdot \lambda_{\mathbb{D}}$ is $\lambda_{F_{\mu_{\vis}}}$.
\end{theorem}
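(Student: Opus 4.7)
The plan is to associate to every finite positive singular measure $\mu$ the hull $\lambda$ of $|S_\mu|\cdot\lambda_{\mathbb{D}}$, identify this hull as $\lambda_{F_\nu}$ for a distinguished submeasure $\nu \le \mu$, and then read off both the decomposition and the invisibility criterion from this identification.

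First I would argue that $\lambda = \lambda_{F_\nu}$ for some $\nu \le \mu$. Since $\lambda \ge |S_\mu|\cdot\lambda_{\mathbb{D}} > 0$ throughout $\mathbb{D}$, Liouville's theorem provides a holomorphic $G:\mathbb{D}\to\mathbb{D}$ with $\lambda = \lambda_G$; Lemma \ref{technical-lemma}, applied with $H = S_\mu$, upgrades $G$ to an element of $\mathscr J$; and Lemma \ref{ac-theory1} yields $\sigma(G') \le \mu$. Writing $\inn G' = S_\nu$, I set $\mu_{\vis} := \nu$ and note that $\mu_{\vis}$ is constructible by construction, with $F_{\mu_{\vis}} = G$.

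Next I would verify that $\mu_{\inv} := \mu - \mu_{\vis}$ is invisible. If a positive $\sigma \le \mu_{\inv}$ were constructible, the product rule (Lemma \ref{product-law2}) would make $\mu_{\vis} + \sigma \le \mu$ constructible as well, and the fundamental lemma would give
\[
\lambda_{F_{\mu_{\vis} + \sigma}} \;\ge\; |S_{\mu_{\vis} + \sigma}|\cdot\lambda_{\mathbb{D}} \;\ge\; |S_\mu|\cdot\lambda_{\mathbb{D}}.
\]
Minimality of the hull forces $\lambda_{F_{\mu_{\vis}+\sigma}} \ge \lambda_{F_{\mu_{\vis}}}$, and Lemma \ref{ac-theory2} turns this into the impossible inequality $\mu_{\vis} + \sigma \le \mu_{\vis}$. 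The same reasoning shows more generally that $\mu_{\vis}$ is the \emph{largest} constructible submeasure of $\mu$, which settles uniqueness of the decomposition: any constructible $\sigma \le \mu$ gives $\lambda_{F_\sigma} \ge |S_\mu|\cdot\lambda_{\mathbb{D}}$, hence $\lambda_{F_\sigma} \ge \lambda_{F_{\mu_{\vis}}}$ by minimality, hence $\sigma \le \mu_{\vis}$ by Lemma \ref{ac-theory2}.

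The equivalence ``$\mu$ is invisible $\iff$ the hull equals $\lambda_{\mathbb{D}}$'' now drops out of the identity $\lambda = \lambda_{F_{\mu_{\vis}}}$: $\mu$ is invisible precisely when $\mu_{\vis} = 0$, which by Dyakonov's triviality criterion (recalled after Theorem \ref{main-thm}) is equivalent to $F_{\mu_{\vis}}$ being a M\"obius transformation, i.e.\ to $\lambda = \lambda_{\mathbb{D}}$. I do not anticipate a real obstacle here; all the heavy lifting has been done in the preceding sections, and the argument is essentially a careful packaging of the lemma immediately preceding this theorem together with the observation that $\mu_{\vis}$ is characterized by maximality.
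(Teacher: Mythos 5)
Your argument is correct and follows essentially the paper's own route: the paper's proof of this theorem is just the argument of the lemma immediately preceding it (identify the hull of $|S_\mu|\lambda_{\mathbb{D}}$ as $\lambda_{F_\nu}$ via Liouville's theorem, Lemma \ref{technical-lemma} and Lemma \ref{ac-theory1}, then use the product rule together with Lemma \ref{ac-theory2} to show the remainder $\mu-\nu$ is invisible and that $\nu$ is the maximal constructible submeasure), which you have reproduced and packaged carefully. One small citation point: when you make $\mu_{\vis}+\sigma$ constructible you should invoke the two-term product rule (Corollary \ref{product-law}) rather than the countable version (Lemma \ref{product-law2}), since the paper derives the latter from the very theorem being proved here.
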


We are now in a position to prove the countable version of the product rule (Lemma \ref{product-law2}).
Suppose we are given countably many constructible measures $\mu_j$, $j=1, 2, \dots$ such that their their sum $\mu = \sum_{j=1}^\infty \mu_j$ is a finite measure.
 We claim that $\mu$ is constructible. According to Theorem \ref{inv-criterion}, the hull of $|S_{\mu}| \cdot \lambda_{\mathbb{D}}$ is of the form $\lambda_{F_{\nu}}$ for some measure $\nu \le \mu$.
However, from Corollary \ref{product-law}, we know that $\tilde \mu_j = \mu_1 + \mu_2 + \dots + \mu_j$ is constructible. This shows that $\nu \ge \tilde \mu_j$ for any $j$, which forces $\nu = \mu$.

\subsection{A criterion for invisibility}
\label{sec:cfi}

In this section, we only consider conformal metrics with strictly positive densities, that is, genuine metrics instead of pseudometrics.
Given a positive continuous function $u$ on $\mathbb{S}_r = \{z : |z|=r\}$, $0 < r < 1$, let $\Lambda_r[u]$ denote the unique conformal metric of curvature $-4$
on $\mathbb{D}_r = \{z : |z| < r\}$ which agrees with $u$ on $\mathbb{S}_r$. For the existence and uniqueness of $\Lambda_r[u]$, we refer the reader to \cite[Section 12]{heins} or \cite[Appendix]{conf-metrics}. For a non-vanishing SK-metric $\lambda$, we will sometimes write $\Lambda[\lambda] = \hat \lambda$ for the minimal metric of curvature $-4$ that exceeds $\lambda$.

\begin{lemma}
\label{mono-lemma}
The operation $u \to \Lambda_r[u]$ is monotone in $u$, that is, if $u \ge v$ then $\Lambda_r[u] \ge \Lambda_r[v]$.
\end{lemma}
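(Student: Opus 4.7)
The plan is to prove monotonicity via the maximum principle applied to the logarithmic quotient of the two metrics, which is a standard device in the theory of conformal metrics of constant curvature.

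First I would introduce the comparison function
\[
w(z) \, := \, \log \frac{\Lambda_r[v](z)}{\Lambda_r[u](z)}, \qquad z \in \overline{\mathbb{D}_r}.
\]
Since both $\Lambda_r[u]$ and $\Lambda_r[v]$ have strictly positive densities that are $C^2$ in $\mathbb{D}_r$ and continuous up to $\mathbb{S}_r$ (these are exactly the regularity properties guaranteed by the existence statement in \cite[Section 12]{heins} or \cite[Appendix]{conf-metrics} cited just before the lemma), the function $w$ is continuous on $\overline{\mathbb{D}_r}$ and $C^2$ in $\mathbb{D}_r$. On the boundary circle $\mathbb{S}_r$ we have $\Lambda_r[u] = u$ and $\Lambda_r[v] = v$, so the hypothesis $u \ge v > 0$ gives $w \le 0$ there.

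Next I would compute $\Delta w$ using the curvature identity $\Delta \log \Lambda = 4 \Lambda^2$ (which is simply the definition of curvature $-4$ rewritten):
\[
\Delta w \, = \, 4 \Lambda_r[v]^2 - 4 \Lambda_r[u]^2.
\]
In particular, at any point where $w(z) > 0$, i.e.\ where $\Lambda_r[v](z) > \Lambda_r[u](z)$, we have $\Delta w(z) > 0$.

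The proof then concludes by the maximum principle. Suppose for contradiction that $w$ takes a positive value somewhere in $\mathbb{D}_r$. Because $w$ is continuous on the compact set $\overline{\mathbb{D}_r}$ and is nonpositive on the boundary, its maximum $M > 0$ is attained at some interior point $z_0 \in \mathbb{D}_r$. At $z_0$ we must have $\Delta w(z_0) \le 0$, but the computation above forces $\Delta w(z_0) > 0$ since $w(z_0) = M > 0$. This contradiction yields $w \le 0$ everywhere, i.e.\ $\Lambda_r[v] \le \Lambda_r[u]$ on $\overline{\mathbb{D}_r}$, as desired.

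I do not expect a serious obstacle here; the only point that requires care is boundary continuity of the two metrics, and that is already part of the invoked existence/uniqueness theorem. The curvature computation $\Delta\log\Lambda = 4\Lambda^2$ is precisely what makes the quotient argument go through, and it also explains why working with $\log$ of the ratio (rather than the difference) is the natural choice.
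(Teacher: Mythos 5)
Your proof is correct and follows essentially the same route as the paper: the paper forms $h=\log^+\bigl(\Lambda_r[v]/\Lambda_r[u]\bigr)$, checks via the curvature identity that $h$ is non-negative, subharmonic and zero on $\mathbb{S}_r$, and concludes $h\equiv 0$. Your use of $w=\log\bigl(\Lambda_r[v]/\Lambda_r[u]\bigr)$ with the second-derivative test at an interior maximum is just a cosmetic variant of the same maximum-principle argument.
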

To see this, note that the function
$
h = \log^+(\Lambda_r[v]/\Lambda_r[u])
$
is non-negative, subharmonic and identically zero on $\mathbb{S}_r$. As usual, to check that $h$ is subharmonic, we use the definition of curvature: 
$$\Delta h \, = \, (4 \Lambda_r[v]^2 - 4 \Lambda_r[u]^2) \cdot \chi_{v > u} \, \ge \, 0.$$
A similar argument shows:

\begin{lemma}
\label{lambda-convergence}
Let $\lambda$ be a non-vanishing conformal metric on the unit disk of curvature at most $-4$. For $0 < r < 1$, the metric  $\Lambda_r[\lambda(re^{i\theta})]$ is the minimal metric of curvature $-4$ that exceeds $\lambda$ on $\mathbb{D}_r$.
The family of metrics $\Lambda_r[\lambda(re^{i\theta})]$ is non-decreasing in $r$, and the limit
\begin{equation}
\label{eq:hull-formula}
\hat \lambda \, = \, \Lambda[\lambda] \, = \, \lim_{r \to 1} \Lambda_r[\lambda(re^{i\theta})]
\end{equation}
is the minimal metric of curvature $-4$ that exceeds $\lambda$ on $\mathbb{D}$.
\end{lemma}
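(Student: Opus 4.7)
The plan is to prove the three assertions in the order they are stated: first the minimality of $\Lambda_r[\lambda(re^{i\theta})]$ on $\mathbb{D}_r$, next the monotonicity in $r$, and finally the identification of the limit with $\hat\lambda$. Write $u(\theta) = \lambda(re^{i\theta})$ throughout.

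For the first assertion, I would start by showing $\Lambda_r[u] \ge \lambda$ on $\mathbb{D}_r$ via a maximum principle argument modeled on the proof of Lemma \ref{mono-lemma}. Set $h = \log^+(\lambda/\Lambda_r[u])$ on $\mathbb{D}_r$. On the open set $\{\lambda > \Lambda_r[u]\}$, the hypothesis that $\lambda$ has curvature at most $-4$ gives $\Delta \log \lambda \ge 4\lambda^2$, while $\Lambda_r[u]$ has curvature exactly $-4$, so
$$\Delta h \, = \, \Delta\log\lambda - \Delta\log\Lambda_r[u] \, \ge \, 4(\lambda^2 - \Lambda_r[u]^2) \, > \, 0.$$
Thus $h$ is subharmonic, non-negative, and vanishes on $\mathbb{S}_r$ (where $\lambda = u = \Lambda_r[u]$); the maximum principle forces $h \equiv 0$, hence $\lambda \le \Lambda_r[u]$. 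For minimality, let $\mu$ be any metric of curvature $-4$ on $\mathbb{D}_r$ with $\mu \ge \lambda$. On $\mathbb{S}_r$ this gives $\mu \ge u$, so Lemma \ref{mono-lemma} yields $\Lambda_r[\mu|_{\mathbb{S}_r}] \ge \Lambda_r[u]$; but the uniqueness part of Heins' construction identifies $\Lambda_r[\mu|_{\mathbb{S}_r}] = \mu$, so $\mu \ge \Lambda_r[u]$.

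The monotonicity in $r$ follows immediately from the first assertion: if $r < s$, then $\Lambda_s[\lambda(se^{i\theta})]$ restricted to $\mathbb{D}_r$ is a metric of curvature $-4$ exceeding $\lambda$, and so by the minimality just established it dominates $\Lambda_r[u]$ on $\mathbb{D}_r$. To identify the limit, observe that for any fixed $z \in \mathbb{D}$, the family $\{\Lambda_r[\lambda(re^{i\theta})](z)\}_{r > |z|}$ is monotone non-decreasing and is bounded above by $\hat\lambda(z)$, since applying minimality on $\mathbb{D}_r$ to $\mu = \hat\lambda|_{\mathbb{D}_r}$ gives $\Lambda_r[\lambda(re^{i\theta})] \le \hat\lambda$. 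Hence the pointwise limit $\tilde\lambda$ exists on $\mathbb{D}$ and satisfies $\lambda \le \tilde\lambda \le \hat\lambda$.

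It remains to upgrade this monotone pointwise limit to a bona fide regular metric of curvature $-4$; this is the step where care is required, but it is a standard consequence of the elliptic regularity of the Gauss curvature equation $\Delta \log \Lambda = 4\Lambda^2$ together with Heins' convergence theorem for monotone families of SK-metrics, which ensures that an increasing limit of such metrics, dominated by a regular metric of curvature $-4$, is itself a regular metric of constant curvature $-4$. Once this is known, $\tilde\lambda$ is admissible in the definition of the hull, so the minimality of $\hat\lambda$ forces $\hat\lambda \le \tilde\lambda$, yielding the equality $\tilde\lambda = \hat\lambda$ and the formula (\ref{eq:hull-formula}). The main obstacle is precisely this regularity upgrade, but it is a classical ingredient of the Perron-family machinery already invoked earlier in the paper.
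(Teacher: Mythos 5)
Your proof is correct and follows the route the paper intends (the paper gives no details, saying only that ``a similar argument'' to Lemma \ref{mono-lemma} applies), and your explicit treatment of the monotone limit via the Perron/Heins regularity machinery is exactly what is needed. One small technical slip: in the minimality step you take an arbitrary competitor $\mu$ of curvature $-4$ on $\mathbb{D}_r$ with $\mu \ge \lambda$ and restrict it to $\mathbb{S}_r$, but such a $\mu$ need not extend continuously to the boundary (e.g.\ the Poincar\'e metric of $\mathbb{D}_r$ is a competitor that blows up on $\mathbb{S}_r$), so $\Lambda_r[\mu|_{\mathbb{S}_r}]$ is not defined. The fix is the same maximum-principle comparison you already use: set $h=\log^+\bigl(\Lambda_r[u]/\mu\bigr)$, note $\Delta h\ge 0$ where $\Lambda_r[u]>\mu$, and observe that $\limsup_{z\to\zeta}h(z)\le 0$ for $\zeta\in\mathbb{S}_r$ because $\Lambda_r[u]$ extends continuously with boundary value $\lambda(\zeta)$ while $\liminf_{z\to\zeta}\mu(z)\ge\lambda(\zeta)$ by continuity of $\lambda$ on $\mathbb{D}$; hence $\mu\ge\Lambda_r[u]$ with no boundary-regularity assumption on $\mu$.
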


In general, it is difficult to evaluate $\Lambda_r[u]$ explicitly. In the next lemma, we do so when $u$ is a constant function.

\begin{lemma}
\label{fail-lemma}
Given any $0 < c \le 1$, there exists a unique $0 < r' \le r$ so that
  $\Lambda_r [c \cdot \lambda_{\mathbb{D}}] = L^*\lambda_{\mathbb{D}}$ where
 $L(z) = \frac{r'}{r} \cdot z$ is the linear map $\mathbb{D}_{r} \to \mathbb{D}_{r'}$.
 \end{lemma}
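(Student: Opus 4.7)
The plan is to exploit the rotational symmetry and reduce the problem to a boundary-matching calculation. First I would observe that the boundary data $c \cdot \lambda_{\mathbb{D}}(re^{i\theta}) = c/(1-r^2)$ is constant on $\mathbb{S}_r$, and in particular rotationally invariant. By the uniqueness of the solution to the constant-curvature Dirichlet problem cited from Heins, $\Lambda_r[c \cdot \lambda_{\mathbb{D}}]$ is itself rotationally invariant about the origin.

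Next, I would produce an explicit rotationally invariant candidate. For $L(z) = (r'/r) \cdot z$, the pullback $L^*\lambda_{\mathbb{D}}$ is a metric on $\mathbb{D}_r$ which is automatically of constant curvature $-4$ (pullback under a holomorphic map preserves curvature, and $L$ has no critical points). Explicitly,
\begin{equation*}
L^*\lambda_{\mathbb{D}}(z) \, = \, \frac{r'/r}{1 - (r'/r)^2 |z|^2},
\end{equation*}
so its boundary value on $\mathbb{S}_r$ is the constant $r'/\bigl(r(1-(r')^2)\bigr)$. Matching this to $c/(1-r^2)$ gives the equation
\begin{equation*}
r'(1-r^2) \, = \, c r \bigl(1 - (r')^2\bigr),
\end{equation*}
a quadratic in $r'$. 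Viewing the left side as a strictly increasing function of $r'$ (from $0$ to $r(1-r^2)$ as $r'$ runs from $0$ to $r$) and the right side as a strictly decreasing function of $r'$ (from $cr$ down to $cr(1-r^2)$), an elementary comparison at the endpoints $r'=0$ and $r'=r$ shows that for any $0 < c \le 1$ there is exactly one solution in $(0, r]$, with $r' = r$ precisely when $c = 1$.

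Finally, the candidate metric $L^* \lambda_{\mathbb{D}}$ is continuous up to $\mathbb{S}_r$, agrees there with $c \cdot \lambda_{\mathbb{D}}$, and has constant curvature $-4$ on $\mathbb{D}_r$; so by the uniqueness part of the definition of $\Lambda_r[\cdot]$ recalled above the lemma, it coincides with $\Lambda_r[c \cdot \lambda_{\mathbb{D}}]$. Uniqueness of $r'$ follows from the monotonicity argument on the quadratic, or alternatively from Lemma \ref{mono-lemma} applied to two putative choices $r'_1 < r'_2$, whose pullback metrics are strictly comparable on $\mathbb{S}_r$. There is no real obstacle here; the content is entirely the symmetry observation plus a one-variable calculus check that the boundary-matching equation has a unique root in $(0, r]$.
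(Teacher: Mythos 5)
Your proof is correct and follows essentially the same route as the paper, which simply observes that the metrics $L^*\lambda_{\mathbb{D}}$ are increasing in $r'$ and matches boundary values; you have just made the boundary-matching equation and the monotonicity check explicit. No further comment is needed.
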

  
 The lemma follows by observing that the metrics $(L_{r'})^*\lambda_{\mathbb{D}}$ are increasing in $r'$,
 so there is a unique value of $r'$ to make the boundary values agree. 
 
\begin{corollary}
\label{fail-lemma2}
We have
 $$\lim_{C \to \infty} \biggl [ \lim_{r \to 1} \frac{\Lambda_r[C]}{\lambda_{\mathbb{D}}} \biggr ] \to 1,$$
uniformly on compact subsets of the unit disk.
 \end{corollary}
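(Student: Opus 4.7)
The plan is to make an explicit rotationally symmetric computation, extending Lemma \ref{fail-lemma} to values of $C$ larger than the Poincar\'e density on $\mathbb{S}_r$. The same ansatz applies: I look for $\Lambda_r[C]$ of the form $L^*\lambda_{\mathbb{D}}$ where $L(z) = \alpha z$, and the boundary condition $\alpha/(1-\alpha^2 r^2) = C$ determines $\alpha = \alpha(C,r)$ as the positive root of $C\alpha^2 r^2 + \alpha - C = 0$, namely
$$
\alpha(C,r) \, = \, \frac{-1 + \sqrt{1 + 4r^2 C^2}}{2r^2 C}.
$$
From this formula, $\alpha r < 1$ for every finite $C$, so $L$ genuinely maps $\mathbb{D}_r$ into $\mathbb{D}$; Liouville's theorem together with the uniqueness of $\Lambda_r[\cdot]$ then identifies $L^*\lambda_{\mathbb{D}}$ with $\Lambda_r[C]$.

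Taking the iterated limit is now immediate. As $r \to 1$ with $C$ fixed, continuity in $r$ gives $\alpha(C,r) \to \alpha_\infty(C) := (-1 + \sqrt{1+4C^2})/(2C)$, so
$$
\lim_{r \to 1} \frac{\Lambda_r[C](z)}{\lambda_{\mathbb{D}}(z)} \, = \, \frac{\alpha_\infty(C)\,(1-|z|^2)}{1 - \alpha_\infty(C)^2 |z|^2}.
$$
The right-hand side is jointly continuous in $(z,\alpha_\infty)$ on $\{|z| \le \rho\} \times [0,1]$ for every $\rho < 1$, so the convergence is uniform on compact subsets of $\mathbb{D}$. A Taylor expansion of $\sqrt{1+4C^2}$ yields $1 - \alpha_\infty(C) \sim 1/(2C)$, hence $\alpha_\infty(C) \to 1$ and the expression above tends to $(1-|z|^2)/(1-|z|^2) = 1$ uniformly for $|z| \le \rho$ as $C \to \infty$.

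I do not anticipate any real obstacles: the key input is the rotational-symmetry reduction from Lemma \ref{fail-lemma}, after which the problem becomes the analysis of a continuous function of the single real parameter $\alpha_\infty \in [0,1]$. The only point deserving a moment's care is justifying that the quadratic ansatz really produces $\Lambda_r[C]$ outside the range $c \le 1$ of Lemma \ref{fail-lemma}; this is handled by the monotonicity and uniqueness of $\Lambda_r[\,\cdot\,]$ from Lemma \ref{mono-lemma}, applied with the constant $C$ as boundary data.
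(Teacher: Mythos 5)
Your computation is correct and is essentially the argument the paper intends: the corollary is stated without proof precisely because it follows from the radial ansatz of Lemma \ref{fail-lemma}, and you have simply made the scaling factor $\alpha(C,r)$ explicit by solving the boundary equation $\alpha/(1-\alpha^2 r^2)=C$ and checking $\alpha r<1$, then passing to the iterated limit. The appeal to uniqueness of $\Lambda_r[\,\cdot\,]$ to identify $L^*\lambda_{\mathbb{D}}$ with $\Lambda_r[C]$, and the uniform-on-compacta estimate as $\alpha_\infty(C)\to 1$, are exactly the right supporting steps.
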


With these preparations, we can now prove:

\begin{theorem}
\label{inv-thm}
Suppose $\mu$ is a singular measure on the unit circle which satisfies $\mu(I) \le C |I| \log|1/I|$ for any interval $I \subset \mathbb{S}^1$ and some constant $C > 0$.  Then, $\mu$ is invisible.
\end{theorem}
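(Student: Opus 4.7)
By Theorem \ref{inv-criterion}, the assertion is equivalent to showing that the hull $\widehat{|S_\mu|\lambda_{\mathbb{D}}}$ equals $\lambda_{\mathbb{D}}$. Since $|S_\mu| \le 1$, the Poincar\'e metric itself dominates $|S_\mu|\lambda_{\mathbb{D}}$, so the inequality $\widehat{|S_\mu|\lambda_{\mathbb{D}}} \le \lambda_{\mathbb{D}}$ is immediate; the real content is the reverse inequality. By Lemma \ref{lambda-convergence}, this reduces to showing that the approximating hulls $\Lambda_r[u_r]$ on $\mathbb{D}_r$, with boundary data $u_r(\theta) = |S_\mu(re^{i\theta})|\lambda_{\mathbb{D}}(re^{i\theta})$, tend pointwise to $\lambda_{\mathbb{D}}$ as $r \to 1$.

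The plan is to extract the right kind of boundary control from the hypothesis $\mu(I) \le C|I|\log(1/|I|)$. A dyadic decomposition of the Poisson integral that computes $-\log|S_\mu(re^{i\theta})|$, combined with the modulus bound applied on each dyadic arc around $e^{i\theta}$, yields first a uniform pointwise bound $|S_\mu(re^{i\theta})| \ge (1-r)^{C'}$ with $C'$ depending on $C$. More importantly, for every $\eta > 0$ the Lebesgue measure of the bad set $E_r = \{\theta : |S_\mu(re^{i\theta})| < 1 - \eta\}$ tends to zero as $r \to 1$, so the boundary data $u_r$ concentrate on $\lambda_{\mathbb{D}}|_{\mathbb{S}_r}$ off a vanishing exceptional set.

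Next, I would convert this boundary information into pointwise convergence $\Lambda_r[u_r](z_0) \to \lambda_{\mathbb{D}}(z_0)$ at each fixed $z_0 \in \mathbb{D}$. By monotonicity of $\Lambda_r$ in the boundary data (Lemma \ref{mono-lemma}), one may bound $u_r$ from below by the step function that equals $(1-\eta)\lambda_{\mathbb{D}}$ on $\mathbb{S}_r \setminus E_r$ and $(1-r)^{C'}\lambda_{\mathbb{D}}$ on $E_r$. The hulls of the constant-boundary-value pieces are given explicitly by Lemma \ref{fail-lemma}, and Corollary \ref{fail-lemma2} shows that the ``good'' piece alone pushes $\Lambda_r$ arbitrarily close to $\lambda_{\mathbb{D}}$ as $r \to 1$ and $\eta \to 0$. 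A harmonic-measure argument then confirms that the ``bad'' piece, supported on a set of vanishing Lebesgue measure $|E_r|$ and with boundary value at worst $(1-r)^{C'}\lambda_{\mathbb{D}}$, perturbs the hull only negligibly at $z_0$.

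The main obstacle is precisely this last splitting step: the hull operator is non-linear, governed by the curvature equation rather than the Laplace equation, so one cannot simply decompose $\Lambda_r[u]$ additively in $u$. The appropriate substitute is the subharmonicity of $\log(\nu_1/\nu_2)^+$ for two curvature-$-4$ metrics, which follows from $\Delta\log(\nu_1/\nu_2) = 4(\nu_1^2 - \nu_2^2)$. Combined with the harmonic-measure estimate for the shrinking bad set, this subharmonicity delivers the desired comparison, and the balance between harmonic measure and logarithmic deficit is exactly what the $t\log(1/t)$ modulus of continuity is tailored to make work.
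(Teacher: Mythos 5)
Your overall frame---reducing, via Theorem \ref{inv-criterion} and Lemma \ref{lambda-convergence}, to showing that the approximating hulls $\Lambda_r$ of $|S_\mu|\lambda_{\mathbb{D}}$ converge to $\lambda_{\mathbb{D}}$---is the same as the paper's, and your pointwise bound $|S_\mu(re^{i\theta})|\ge c(1-r)^{C'}$ is correct. But the argument then misses the one idea that makes the proof short. By the product and division rules, $\mu$ is invisible if and only if $\varepsilon\mu$ is, for every $\varepsilon>0$; so one may assume from the outset that $C$, hence $C'$, is as small as desired, in particular $C'<1$. Then your pointwise bound already gives $|S_\mu|\lambda_{\mathbb{D}}\gtrsim (1-|z|)^{C'-1}\to\infty$ \emph{uniformly} as $|z|\to1$, and Lemma \ref{mono-lemma} together with Corollary \ref{fail-lemma2} finishes immediately---no exceptional set, no harmonic measure. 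Without the rescaling, the exponent $C'$ may exceed $1$, your lower bound on the bad set degenerates to something tending to $0$, and you are forced into the splitting argument.

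That splitting argument has a genuine gap. Set $h=\log^+\bigl(\lambda_{\mathbb{D}}/\Lambda_r[u_r]\bigr)$; it is subharmonic on $\mathbb{D}_r$ with boundary values $\log^+(1/|S_\mu|)=P_\mu$ on $\mathbb{S}_r$, so the harmonic-majorant (harmonic-measure) estimate you invoke gives at best
\begin{equation*}
h(0)\;\le\;\log\frac{1}{1-\eta}\;+\;C'\log\frac{1}{1-r}\cdot\frac{|E_r|}{2\pi},
\qquad\text{and in fact}\qquad
h(0)\;\le\;\frac{1}{2\pi}\int_{\mathbb{S}_r}P_\mu\,d\theta\;=\;\mu(\mathbb{S}^1).
\end{equation*}
To make the first bound vanish you need $|E_r|\log\frac{1}{1-r}\to0$, but the hypothesis $\mu(I)\le C|I|\log(1/|I|)$ gives no such rate: it controls concentration at small scales, not the speed at which $P_\mu(re^{i\theta})\to0$, and the weak-type estimate only yields $|E_r|\lesssim\|\mu\|/\log\frac{1}{1-\eta}$ uniformly in $r$. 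The second bound shows the linear comparison can never do better than the constant $\|\mu\|$. Beating this linear barrier is exactly what requires the multi-scale Roberts iteration of Section \ref{sec:roberts}; for the single-scale statement at hand, the paper sidesteps it entirely with the $\varepsilon\mu$ normalization described above.
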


\begin{proof}
From the product rule (Corollary \ref{product-law}), it is easy to see that a measure $\mu$ is invisible if and only if $\varepsilon \cdot \mu$ is for any $\varepsilon > 0$. This allows us to assume that $\mu(I) \le \varepsilon |I| \log|1/I|$ which implies that the Poisson extension  
$P_\mu(z) \le \varepsilon(A \log \frac{1}{1-|z|} + B)$ for some constants $A$ and $B$. Hence, $|S_\mu| \lambda_{\mathbb{D}} \to \infty$ as $|z| \to 1$. The theorem now follows from the monotonicity principle (Lemma \ref{mono-lemma}) and Corollary \ref{fail-lemma2}.
\end{proof}

\section{Roberts decompositions}
\label{sec:roberts}

In this section, we show that if  $\mu$ does not charge Beurling-Carleson sets, then it is invisible, that is, any measure $0 < \nu \le \mu$ cannot be in the image of the map $F \to \inn F'$. To upgrade the argument of Section \ref{sec:cfi}, we will use the following theorem which is implicit in the work of Roberts \cite{roberts}:
\begin{theorem}
Suppose $\mu$ is a measure on the unit circle which does not charge Beurling-Carleson sets. Given a real number $c > 0$ and integer $j_0 \ge 1$, $\mu$ can be expressed as a countable sum
\begin{equation}
\label{eq:roberts-decomposition}
\mu = \sum_{j=1}^\infty \mu_j,
\end{equation}
where each $\mu_j$ enjoys an estimate on the modulus of continuity:
\begin{equation}
\label{eq:mod-cont2}
\omega_{\mu_j}(1/n_j) \le \frac{c}{n_j} \cdot \log n_j, \qquad n_j := 2^{2^{j+j_0}}.
\end{equation}
Here, $\omega_\mu(t) = \sup_{I \subset \mathbb{S}^1} \mu(I)$, with the supremum  taken over all intervals of length $t$. 
\end{theorem}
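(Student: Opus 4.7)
My plan is a scale-by-scale greedy decomposition. For each $j \ge 1$, let $\mathcal{P}_j$ denote the partition of $\mathbb{S}^1$ into $n_j$ arcs of equal length $1/n_j$; these partitions refine one another since $n_{j+1} = n_j^2$. I would declare $I \in \mathcal{P}_j$ to be \emph{$j$-good} if $\mu(I) \le \varepsilon_j := (c/2)\log n_j / n_j$ and \emph{$j$-bad} otherwise, and let $B_j$, $G_j \subset \mathbb{S}^1$ be the unions of bad and good level-$j$ arcs respectively.

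I then define
\[
\mu_1 := \mu|_{G_1}, \qquad \mu_j := \mu\bigl|_{B_1 \cap B_2 \cap \cdots \cap B_{j-1} \cap G_j} \quad (j \ge 2),
\]
and set $F := \bigcap_{j \ge 1} B_j$, yielding the decomposition $\mu = \sum_{j \ge 1} \mu_j + \mu|_F$. The modulus of continuity estimate is immediate: each $\mu_j$ is supported in $G_j$, hence assigns mass at most $\varepsilon_j$ to any member of $\mathcal{P}_j$; a general interval of length $1/n_j$ is covered by at most two consecutive arcs of $\mathcal{P}_j$, so $\omega_{\mu_j}(1/n_j) \le 2\varepsilon_j = c \log n_j / n_j$, as required.

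The substantive task is to show that the residue vanishes: $\mu(F) = 0$. Since a $j$-bad arc carries mass exceeding $\varepsilon_j$, there are at most $M/\varepsilon_j = 2M n_j/(c \log n_j)$ of them (with $M = \mu(\mathbb{S}^1)$), so $|F| \le |B_j| \le 2M/(c\log n_j) \to 0$ and $F$ is a closed Lebesgue-null set. To upgrade this to $\mu(F) = 0$ I would verify, exploiting the doubly exponential growth $\log n_{j+1} = 2 \log n_j$, that $F$ is a Beurling-Carleson set; the hypothesis that $\mu$ does not charge BC sets then forces $\mu(F) = 0$. For the Carleson summation condition on the components $\{I_k\}$ of $\mathbb{S}^1 \setminus F$, one decomposes $F^c$ as the disjoint union $\bigsqcup_k H_k$ where $H_k := B_1 \cap \cdots \cap B_{k-1} \cap G_k$ consists of points ``first revealed as good at level $k$''; since $H_k \subseteq B_{k-1}$ has Lebesgue measure at most $2M/(c \log n_{k-1})$, the first-time good arcs at level $k$ are geometrically confined to very few parent arcs of level $k-1$, forcing them to merge via the subadditivity of $x \mapsto x \log(1/x)$ into maximal components whose Carleson contributions sum to a finite number.

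The main obstacle is precisely this Beurling-Carleson verification for $F$: a naive count of first-time good arcs at level $j$, treated individually, produces a term of order $M/c$ independent of $j$ (because $\log n_j / \log n_{j-1} = 2$), and diverges on summation. The savings must come entirely from a careful tracking of how these small arcs merge inside bad ancestor arcs, which is where the doubly exponential choice of $n_j$ is essential; if this bookkeeping proves insufficient for a fixed $c$ and $j_0$, the back-up plan is to iterate the construction, reapplying the decomposition to the residue $\mu|_{F^{(k)}}$ with successively larger $j_0^{(k)}$ and renumbering, so that $\mu(F^{(k)}) \downarrow 0$. Either way, once the residue is shown to vanish, the decomposition $\mu = \sum_{j \ge 1} \mu_j$ is complete.
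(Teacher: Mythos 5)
Your modulus-of-continuity estimate is fine, but the residue analysis is not merely an unfinished bookkeeping step: the decomposition you propose genuinely fails, and no amount of merging will rescue it. The flaw is that you judge badness against the \emph{original} measure $\mu$ and assign \emph{nothing} of $\mu|_I$ to $\mu_j$ when $I$ is bad. Here is a concrete counterexample. Build a Cantor-type measure aligned with your partitions: select one arc of $\mathcal{P}_1$, and recursively select $n_j/2$ of the $n_j$ children in $\mathcal{P}_{j+1}$ of each selected arc of $\mathcal{P}_j$, splitting the mass equally, normalized so that every selected arc of $\mathcal{P}_j$ carries mass exactly $(\log n_j)/n_j$. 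A short computation gives $\omega_\mu(t) \lesssim t\log(1/t)$ for small $t$, so $\mu$ does not charge Beurling--Carleson sets (combine Theorem \ref{inv-thm} with Lemma \ref{cullen-thm} and the division rule, Lemma \ref{divisors3}). Yet for any $c<2$ every arc of $\mathcal{P}_j$ meeting $\supp\mu$ is $j$-bad, every good arc is $\mu$-null, so all your $\mu_j$ vanish and the entire mass sits in $F=\bigcap_j B_j$. Your back-up plan does not help: enlarging $j_0$ only deletes scales from the test, so $F$ only grows.

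The idea you are missing is Roberts' \emph{grating}, which differs from your scheme in two coupled ways: (1) heaviness at stage $j$ is measured against the residual $\rho_{j-1}=\mu-\mu_1-\cdots-\mu_{j-1}$, not against $\mu$; and (2) on a heavy arc $I$ one does not set $\mu_j=0$ but instead peels off the scalar multiple of $\rho_{j-1}|_I$ whose total mass is exactly the threshold $\tau_j \asymp (c/n_j)\log n_j$ (on light arcs one takes $\mu_j=\rho_{j-1}$). This still gives $\mu_j(I)\le\tau_j$ for every $I\in\mathcal{P}_j$, hence the modulus-of-continuity bound, but now the mass assigned to $\mu_j$ on the heavy arcs is exactly $N_j\tau_j$, where $N_j$ is the number of heavy arcs, so $\sum_j N_j\tau_j\le\|\mu\|$ telescopes. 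Since the new light arcs of generation $j$ lie inside heavy arcs of generation $j-1$, and each heavy arc of generation $j-1$ contributes at most $(n_j/n_{j-1})\cdot(\log n_j)/n_j=(2/c)\,\tau_{j-1}$ to the Carleson sum (subadditivity of $x\log(1/x)$ is only needed to pass to maximal components), the total is at most $(2/c)\sum_j N_{j-1}\tau_{j-1}\le(2/c)\|\mu\|<\infty$: the residual set is Beurling--Carleson up to a countable set, hence $\mu$-null by hypothesis. In your scheme $N_j\varepsilon_j$ can equal $\|\mu\|$ at every stage, which is exactly why your count diverges; on the example above, the grating exhausts the measure after roughly $1/c$ generations, whereas your decomposition never removes any mass at all.
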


It will be important for us that the measure $\mu$ admits infinitely many decompositions with different parameters $c$ and $j_0$, where $c$ can be made arbitrarily small and $j_0$ arbitrarily large.

\begin{proof}[Sketch of proof.] For each $j = 1, 2, \dots$, we can define a partition $P_j$ of the unit circle into $n_j$ equal arcs. Since $n_j$ divides $n_{j+1}$, each next partition can be chosen to be a refinement of the previous one.
Given any measure $\mu$ on the unit circle, Roberts defines the notion of the {\em grating} of $\mu$ with respect to the sequence of partitions $(P_j)$. This procedure decomposes $\mu =  \sum_{j=1}^\infty \mu_j + \nu$ so that (\ref{eq:mod-cont2}) holds for each $j$, with the residual measure $\nu$ supported on the union of a Beurling-Carleson set  and a countable set.

To define $\mu_1$, consider all intervals in the partition $P_1$. Define an interval to be {\em light} if 
$\mu(I) \le (c/n_1) \cdot \log n_1$ and {\em heavy} otherwise. On a light interval, take $\mu_1 = \mu$, while
on a heavy interval, let $\mu_1$ be a  multiple of $\mu$ so that the mass $\mu_1(I) = (c/n_1) \cdot \log n_1$. Clearly, $\mu_1 < \mu$. 
Consider the difference  $\mu - \mu_1$ and grate it with respect to partition $P_2$ to form the measure $\mu_2$, then consider  $\mu - \mu_1 - \mu_2$ and grate it with respect to $P_3$ to form $\mu_3$, and so on. Continuing in this way, we obtain a sequence of measures $\mu_1, \mu_2, \dots$ where each next measure is supported on the heavy intervals of the previous generation.

By construction, the bound (\ref{eq:mod-cont2}) holds for all $j$. Inspection reveals that the residual measure $\nu$ is supported on the set of points which lie in heavy intervals at every stage. Up to a countable set, this coincides with $\mathbb{S}^1 \setminus \mathscr L$, where $\mathscr L$ is the union of the light intervals of any generation. (This  countable set consists of points on the unit circle which are endpoints of two different light intervals.) In \cite[Proof of Theorem 2]{roberts}, Roberts gave a simple computation using the relation $\log n_{j+1} = 2 \log n_j$ to show that $\mathbb{S}^1 \setminus \mathscr L$ is a Beurling-Carleson set.

Now, if $\mu$ does not charge Beurling-Carleson sets, it does not charge points so it cannot charge countable sets, which forces the residual measure to be 0.
\end{proof}

The estimate (\ref{eq:mod-cont2}) on the modulus of continuity is easily seen to be equivalent to an estimate on the Poisson extension: 
\begin{equation}
\label{eq:mod-cont}
|P_{\mu_j}| \le c' \cdot \log \frac{1}{1-|z|^2}, \qquad z \in B(0,1-1/n_j).
\end{equation}
Here, the constant $c'$ can be taken to be $cc_1$ for some $c_1 > 0$. This is stated in \cite[Lemma 2.2]{roberts}.

We will also need a simple lemma on conformal metrics:

\begin{lemma} {\em (i)} For any two singular measures $\mu_1$ and $\mu_2$ on the unit circle,
$$
\Lambda \Bigl [ | S_{\mu_1} | \cdot \Lambda \bigl [|S_{\mu_2}| \lambda_{\mathbb{D}} \bigr ] \Bigr] = \Lambda \bigl [|S_{\mu_1}| |S_{\mu_2}| \cdot \lambda_{\mathbb{D}} \bigr ].
$$

{\em (ii)} More generally, 
$$
\Lambda \Bigl [|S_{\mu_1}| \cdot \dots \Lambda \bigl [|S_{\mu_{j-1}}| \cdot \Lambda[|S_{\mu_j}| \lambda_{\mathbb{D}}] \bigr ] \dots \Bigr  ] = \Lambda \bigl [|S_{\mu_1}| |S_{\mu_2}|  \cdots |S_{\mu_j}| \cdot \lambda_{\mathbb{D}} \bigr ].
$$

{\em (iii)} For $\mu = \sum_{j=1}^\infty \mu_j$, we have
$$
\lim_{n \to \infty} \Lambda \Bigl [|S_{\mu_1}| \cdot \dots \Lambda \bigl [|S_{\mu_{j-1}}| \cdot \Lambda[|S_{\mu_j}| \lambda_{\mathbb{D}}] \bigr ] \dots \Bigr  ] 
 = \Lambda \bigl [|S_{\mu}| \lambda_{\mathbb{D}} \bigr ].
$$
\end{lemma}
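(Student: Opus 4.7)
The plan is to prove (i) by an Ahlfors--Schwarz comparison on each subdisk $\mathbb{D}_r$, deduce (ii) by induction from (i), and combine (ii) with a regularity/identification argument to obtain (iii).

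For (i), the $\ge$ direction is immediate from monotonicity of $\Lambda$: $\Lambda[|S_{\mu_2}|\lambda_{\mathbb{D}}] \ge |S_{\mu_2}|\lambda_{\mathbb{D}}$ yields $|S_{\mu_1}|\Lambda[|S_{\mu_2}|\lambda_{\mathbb{D}}] \ge |S_{\mu_1}||S_{\mu_2}|\lambda_{\mathbb{D}}$, which survives another application of $\Lambda$. The $\le$ direction is the main obstacle and reduces to showing
$$ \Lambda[|S_{\mu_1}||S_{\mu_2}|\lambda_{\mathbb{D}}] \, \ge \, |S_{\mu_1}| \cdot \Lambda[|S_{\mu_2}|\lambda_{\mathbb{D}}], $$
since the left-hand side, having constant curvature $-4$ and dominating the SK-metric on the right, must then dominate the hull of that SK-metric. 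To prove this claim I use Lemma \ref{lambda-convergence}: on each subdisk $\mathbb{D}_r$ I compare
$$ A_r := \Lambda_r\bigl[(|S_{\mu_1}||S_{\mu_2}|\lambda_{\mathbb{D}})|_{\mathbb{S}_r}\bigr], \qquad B_r := |S_{\mu_1}| \cdot \Lambda_r\bigl[(|S_{\mu_2}|\lambda_{\mathbb{D}})|_{\mathbb{S}_r}\bigr]. $$
The metric $A_r$ has constant curvature $-4$ on $\mathbb{D}_r$; $B_r$ is an SK-metric because multiplication by the non-vanishing bounded holomorphic function $|S_{\mu_1}|$ (whose logarithm is harmonic on $\mathbb{D}$) turns a metric of curvature $-4$ into one of curvature $-4/|S_{\mu_1}|^2 \le -4$. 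The two metrics agree on $\mathbb{S}_r$, so the Ahlfors--Schwarz comparison for SK-metrics against metrics of curvature $-4$ with matching boundary values gives $B_r \le A_r$ throughout $\mathbb{D}_r$, and letting $r \to 1$ via Lemma \ref{lambda-convergence} delivers the claim.

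For (ii), I induct on $j$. The base case $j=1$ is trivial; in the inductive step, the hypothesis applied to the innermost $j-1$ hulls rewrites the nested expression as $\Lambda\bigl[|S_{\mu_1}|\cdot\Lambda[|S_{\mu_2+\cdots+\mu_j}|\lambda_{\mathbb{D}}]\bigr]$, which (i) applied to the pair $(\mu_1,\,\mu_2+\cdots+\mu_j)$ collapses to $\Lambda[|S_{\mu_1+\cdots+\mu_j}|\lambda_{\mathbb{D}}]$.

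For (iii), set $\sigma_n := \mu_1+\cdots+\mu_n$, so by (ii) the depth-$n$ nested hull equals $\Lambda[|S_{\sigma_n}|\lambda_{\mathbb{D}}]$. Since $\sigma_n \uparrow \mu$ we have $|S_{\sigma_n}| \downarrow |S_\mu|$ pointwise on $\mathbb{D}$, so by monotonicity the sequence $\Lambda[|S_{\sigma_n}|\lambda_{\mathbb{D}}]$ is decreasing in $n$ and pointwise bounded below by $\Lambda[|S_\mu|\lambda_{\mathbb{D}}]$; denote its pointwise limit by $L$. By Heins's regularity theorem for decreasing families of metrics of constant curvature $-4$ with a positive lower bound (cf.~Section 12 of \cite{heins}, and \cite{conf-metrics}), $L$ itself is a metric of constant curvature $-4$, so by Liouville and Lemma \ref{technical-lemma} we can write $L = \lambda_F$ for some $F \in \mathscr J$. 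Lemma \ref{ac-theory1} applied to $L \ge |S_\mu|\lambda_{\mathbb{D}}$ gives $\sigma(F') \le \mu$, while Lemma \ref{ac-theory2} applied to $L \le \Lambda[|S_{\sigma_n}|\lambda_{\mathbb{D}}] = \lambda_{F_{(\sigma_n)_{\vis}}}$ (Theorem \ref{inv-criterion}) gives $\sigma(F') \ge (\sigma_n)_{\vis}$; combined with the monotone convergence $(\sigma_n)_{\vis} \uparrow \mu_{\vis}$ (established by applying the division rule to $\min(\mu_{\vis},\sigma_n)$) and the constructibility of $\sigma(F')$, this identifies $\sigma(F') = \mu_{\vis}$. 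Since $L > 0$ forces $F$ to have no critical points, the injectivity of $F \mapsto \inn F'$ established in Section \ref{sec:inject} forces $F = F_{\mu_{\vis}}$ up to M\"obius transformation, whence $L = \lambda_{F_{\mu_{\vis}}} = \Lambda[|S_\mu|\lambda_{\mathbb{D}}]$ by Theorem \ref{inv-criterion}.
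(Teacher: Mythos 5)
Your proofs of (i) and (ii) coincide with the paper's: the same reduction of the $\le$ direction to $|S_{\mu_1}|\cdot\Lambda[|S_{\mu_2}|\lambda_{\mathbb{D}}]\le\Lambda[|S_{\mu_1}||S_{\mu_2}|\lambda_{\mathbb{D}}]$, the same comparison on $\mathbb{D}_r$ via the subharmonicity of $\log^+$ of the quotient (the paper phrases your Ahlfors--Schwarz step exactly this way), and the same iteration for (ii). Part (iii) is where you genuinely diverge. The paper stays entirely inside the conformal-metric calculus: writing $\tilde\mu_j=\mu_1+\cdots+\mu_j$, part (i) applied to the pair $(\mu-\tilde\mu_j,\tilde\mu_j)$ gives
$$
|S_{\mu-\tilde\mu_j}|\cdot\Lambda\bigl[|S_{\tilde\mu_j}|\lambda_{\mathbb{D}}\bigr]\;\le\;\Lambda\bigl[|S_{\mu}|\lambda_{\mathbb{D}}\bigr]\;\le\;\Lambda\bigl[|S_{\tilde\mu_j}|\lambda_{\mathbb{D}}\bigr],
$$
and since $|S_{\mu-\tilde\mu_j}|\to 1$ pointwise, the squeeze identifies the decreasing limit immediately --- a two-line consequence of (i) plus monotonicity. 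You instead pass to the pointwise limit $L$, invoke Heins's regularity for decreasing families of curvature $-4$ metrics to see $L$ has constant curvature $-4$, realize $L=\lambda_F$ via Liouville and Lemma \ref{technical-lemma}, and then pin down $\sigma(F')=\mu_{\vis}$ using Lemmas \ref{ac-theory1}--\ref{ac-theory2}, Theorem \ref{inv-criterion}, the division rule and injectivity. This is correct as far as I can see (and not circular, since all of that machinery precedes this lemma), but it imports a nontrivial regularity theorem and most of Section \ref{sec:understanding-image} to establish a fact that the squeeze gets for free; the only thing your route buys is an independent identification of the limit as $\lambda_{F_{\mu_{\vis}}}$, which Theorem \ref{inv-criterion} already supplies once the limit is known to be $\Lambda[|S_\mu|\lambda_{\mathbb{D}}]$. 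It is worth internalizing the squeeze: the lower bound in it is precisely the inequality you already proved as the heart of (i).
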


\begin{proof}
(i) The $\ge$ direction follows from the monotonicity of $\Lambda$. For the $\le$ direction, it suffices to show that
$$
 | S_{\mu_1} | \cdot \Lambda \bigl [|S_{\mu_2}| \lambda_{\mathbb{D}} \bigr ]  \le \Lambda \bigl [|S_{\mu_1}| |S_{\mu_2}| \cdot \lambda_{\mathbb{D}} \bigr ]
$$
or
$$
 | S_{\mu_1} | \cdot \Lambda_r \bigl [|S_{\mu_2}| \lambda_{\mathbb{D}} \bigr ]  \le \Lambda_r \bigl [|S_{\mu_1}| |S_{\mu_2}| \cdot \lambda_{\mathbb{D}} \bigr ]
$$
for any $0 < r < 1$, cf.~Lemma \ref{lambda-convergence}. To this end, we form the function
$$
u_r = \log^+ \biggl ( \frac{ | S_{\mu_1} | \cdot \Lambda_r \bigl [|S_{\mu_2}| \lambda_{\mathbb{D}} \bigr ] }{ \Lambda_r \bigl [|S_{\mu_1}| |S_{\mu_2}| \cdot \lambda_{\mathbb{D}} \bigr ] } \biggr )
$$ 
defined on $\mathbb{D}_r = \{ z : |z| < r \}$. Since it is subharmonic and vanishes on $\mathbb{S}_r = \partial \mathbb{D}_r$, it must be identically 0. This proves the $\le$ direction.

(ii) follows after applying (i) $j-1$ times.

(iii) Let $\tilde \mu_j = \mu_1 + \mu_2 + \dots + \mu_j$. By part (i), we have
 $$
|S_{\mu-\tilde \mu_j}| \cdot \Lambda \bigl [|S_{\tilde \mu_j}| \lambda_{\mathbb{D}} \bigr ] \, \le \,
  \Lambda \bigl [|S_{\mu}| \lambda_{\mathbb{D}} \bigr ] \, \le \,  \Lambda \bigl [|S_{\tilde \mu_j}| \lambda_{\mathbb{D}} \bigr ].
 $$
 Since $|S_{\mu-\tilde \mu_j}| \to 1$, it follows that  $\Lambda \bigl [|S_{\tilde \mu_j}| \lambda_{\mathbb{D}} \bigr ]$
 are decreasing and converge to $\Lambda \bigl [|S_{\mu}| \lambda_{\mathbb{D}} \bigr ]$. The quantities on the left side also decrease to their limit. Therefore, the limits must coincide.
\end{proof}

With these preparations, we can now show Theorem \ref{not-charging-thm}:

\begin{proof}[Proof of Theorem \ref{not-charging-thm}.]
{\em Step 1.} Let $\mu = \mu_j$ be the Roberts decomposition (\ref{eq:roberts-decomposition}) with parameters $c$ and $j_0$ to be chosen later. 
In view of the invisibility criterion (Theorem \ref{inv-criterion}), it suffices to show that
\begin{equation}
 \label{eq:clever-strategy}
\lambda_j \, := \, \Lambda_{1-1/n_1} \biggl [|S_{\mu_1}| \cdot \dots \Lambda_{1-1/n_{j-1}} \Bigl [|S_{\mu_{j-1}}| \cdot \Lambda_{1-1/n_j} \bigl [|S_{\mu_j}| \cdot \lambda_{\mathbb{D}} \bigr ] \Bigr ] \dots \biggr ] 
\end{equation}
is close to the hyperbolic metric at the origin, uniform in $j \ge 1$. Indeed, by the monotonicity properties of $\Lambda$, we have 
\begin{equation}
\label{eq:clever-strategy2}
\lambda_j \, \le \, \Lambda \biggl [|S_{\mu_1}| \cdot \dots \Lambda \Bigl [|S_{\mu_{j-1}}| \cdot \Lambda \bigl [|S_{\mu_j}| \cdot \lambda_{\mathbb{D}} \bigr ] \Bigr ] \dots \biggr ],
\end{equation}
so that if $\lambda_j$ is close to $\lambda_{\mathbb{D}}$, then so must  $$\Lambda \bigl [|S_{\mu_1}| |S_{\mu_2}|  \cdots |S_{\mu_n}| \lambda_{\mathbb{D}} \bigr ].$$

{\em Step 2.}
The estimate on the modulus of continuity of $\mu_j$ implies that $|S_{\mu_j}| \lambda_{\mathbb{D}} \ge \lambda_{\mathbb{D}}^{4/5}$ on the circle $\mathbb{S}_{1-1/n_j}$. Here, we use the fact that we can choose 
$c' < 1/10$ in (\ref{eq:mod-cont}).
We claim that this implies that
\begin{equation}
\label{eq:inductive-goal}
\Lambda_{1 - 1/n_j} \bigl [|S_{\mu_j}| \lambda_{\mathbb{D}} \bigr ] \ge (1/2) \lambda_{\mathbb{D}}, \qquad \text{on } \mathbb{S}_{1-1/n_{j-1}}.
\end{equation} Assuming (\ref{eq:inductive-goal}), we have
$$
|S_{\mu_{j-1}}| \cdot \Lambda_{1 - 1/n_j} \bigl [|S_{\mu_j}| \lambda_{\mathbb{D}} \bigr ] \ge \lambda_{\mathbb{D}}^{4/5}, \qquad \text{on }\mathbb{S}_{1-1/n_{j-1}}.
$$
 We could then inductively show that $\lambda_j \ge (1/2)  \lambda_{\mathbb{D}}$ on $\mathbb{S}_{1-1/n_1}$. By Corollary \ref{fail-lemma2}, this would mean that $\lambda_j$ is very close to $\lambda_{\mathbb{D}}$ at the origin, provided $n_1$ is large (this is where we use that $j_0$ can be made arbitrarily large.)
 
 \medskip

{\em Step 3.}
Thus, we need to show that  $\Lambda_{1-1/n_j} \bigl [ |S_{\mu_j}| \lambda_{\mathbb{D}} \bigr ] \ge (1/2) \cdot \lambda_{\mathbb{D}}$ on $\mathbb{S}_{1-1/n_{j-1}}$.
Define $\varepsilon > 0$ by $1 - 1/n_j = 1 - \varepsilon$ so that $1 - 1/n_{j-1} = 1 - \varepsilon^{1/2}$. There exists a unique $0 < \ell < 1$ so that $\Lambda_{1-1/n_j} \bigl [\lambda_{\mathbb{D}}^{4/5} \bigr ] = L^*\lambda_{\mathbb{D}}$ where $L(z) = \ell   z$. 
Inspection shows that $1-\ell \asymp \varepsilon^{4/5}$. Therefore,
$$
\Lambda_{1-1/n_{j}} \bigl [ |S_{\mu_j}| \lambda_{\mathbb{D}} \bigr ] 
\, \ge \,
\Lambda_{1-1/n_{j}} \bigl [ \lambda_{\mathbb{D}}^{4/5} \bigr ]
 \, = \, 
 \frac{\ell}{1-|\ell z|^2} \ge (1/2) \cdot \lambda_{\mathbb{D}}, \quad \text{on }\mathbb{S}_{1-1/n_{j-1}}$$
as desired. 
 \end{proof}

\newpage

\bibliographystyle{amsplain}

\begin{thebibliography}{00}

\bibitem{ahern-clark} P.~R.~Ahern, D.~N.~Clark, {\em On inner functions with $H_p$-derivative}\/, Michigan Math. J. 21 (1974), no. 2, 115--127.

\bibitem{AAN} A.~B.~Aleksandrov, J.~M.~Anderson, A.~Nicolau, {\em Inner functions, Bloch spaces and symmetric measures}\/, Proc. London Math. Soc. 79 (1999), no. 2, 318--352.


\bibitem{cullen}M.~Cullen, {\em Derivatives of singular inner functions}\/, Michigan Math. J. 18 (1971), no. 3, 283-287.

\bibitem{craizer}M.~Craizer, {\em Entropy of inner functions}\/, Israel J. Math. 74 (1991), no. 2, 129--168.

\bibitem{duren}P.~Duren, {\em Theory of $H_p$ spaces}\/, Dover Publications, 2000.

\bibitem{dyakonov-coinvariant}K.~M.~Dyakonov, {\em Smooth functions and co-invariant subspaces of the shift operator}\/, Algebra i Analiz 4 (1992), no. 5, 117--147, in Russian. English translation: St. Petersburg Math. J. 4 (1993), no. 5, 933--959.


\bibitem{dyakonov-reverse}K.~M.~Dyakonov, {\em A Reverse Schwarz--Pick Inequality}\/, Computational Methods and Function Theory 13 (2013), no. 7--8, 449--457.

\bibitem{dyakonov-mobius}K.~M.~Dyakonov, {\em A characterization of M\"obius transformations}\/, C. R. Math. Acad. Sci. Paris 352 (2014), no. 2, 593--595.

\bibitem{dyakonov-inner}K.~M.~Dyakonov, {\em Inner functions and inner factors of their derivatives}\/,  Integr. Equ. Oper. Theory 82 (2015), no. 2, 151--155.

\bibitem{heins} M.~Heins, {\em On a class of conformal metrics}, Nagoya Math. J. 21 (1962), 1--60.

\bibitem{korenblum} B.~Korenblum, {\em Cyclic elements in some spaces of analytic functions}\/, Bull. Amer. Math. Soc. 5 (1981), 317--318.

\bibitem{kraus}D.~Kraus, {\em Critical sets of bounded analytic functions, zero sets of Bergman spaces and nonpositive curvature}\/, Proc. London Math. Soc. 106 (2013), no. 4, 931--956.

\bibitem{KR-critical}D.~Kraus, O.~Roth, {\em Critical points of inner functions, nonlinear partial differential equations, and an extension of Liouville's theorem}\/, J. London Math. Soc. 77 (2008), no. 1, 183--202.

\bibitem{KR-survey}D.~Kraus, O.~Roth, {\em Critical Points, the Gauss Curvature Equation and Blaschke Products}\/, In: Blaschke Products and Their Applications, Fields Institute Communications 65 (2012), 133--157.


\bibitem{conf-metrics}D.~Kraus, O.~Roth, {\em Conformal metrics}\/, Lecture Notes Ramanujan Math. Society, Lecture Notes Series 19 (2013), 41--83.

\bibitem{KR-solynin}D.~Kraus, O.~Roth, {\em Strong submultiplicativity of the Poincar\'e metric}\/, J. Analysis 24 (2016), no. 1, 39--50.

\bibitem{maximal-blaschke}D.~Kraus, O.~Roth, {\em Maximal Blaschke Products}\/, Adv. Math. 241 (2013), 58--78.


\bibitem{mashreghi}J.~Mashreghi, {\em Derivatives of Inner Functions}\/, Fields Institute Monographs, 2012.






\bibitem{roberts} J.~W.~Roberts, {\em Cyclic inner functions in the Bergman spaces and weak outer functions in $H^p$,
$0 < p < 1$}\/, Illinois J. Math. 29 (1985), 25--38.





\end{thebibliography}

\end{document}